\newtheorem{Theorem}{\bf \large Theorem}[section]
\newtheorem{Definition}[Theorem]{\bf \large Definition}
\newtheorem{PROPOSITION}[Theorem]{\bf \large Proposition}
\newtheorem{lemma}[Theorem]{\bf \large Lemma}
\newtheorem{ex}[Theorem]{\bf \large Example}
\newtheorem{Remark}[Theorem]{\bf \large Remark}
\def\e{\rm e}
\title{\textbf{Classification of M\"{o}bius Homogeneous Wintgen Ideal Submanifolds}}
\author {Tongzhu Li \and Xiang Ma
\and Changping Wang \and Zhenxiao Xie}
\date{}
\begin{document}
\maketitle  \footnotetext{T. Li, X. Ma, C.P. Wang are partially supported
by the grant No. 11171004 of NSFC.}
\begin{abstract}
A submanifold $f:M^m\to \mathbb{Q}^{m+p}(c)$
in a real space form attaining equality in the DDVV inequality at every point is called a Wintgen ideal submanifold.
They are invariant objects under the M\"{o}bius transformations. In this paper, we classify those Wintgen ideal submanifolds of dimension $m\geq3$ which are M\"{o}bius homogeneous.
There are three classes of non-trivial examples, each related with a famous class of homogeneous minimal surfaces in $S^n$ or $\mathbb{C}P^n$: the cones over Veronese surfaces $S^2\to S^n$, the cones over homogeneous flat minimal surfaces $\mathbb{R}^2\to S^n$, and the Hopf bundle over the Veronese embeddings $\mathbb{C}P^1\to \mathbb{C}P^n$.
\end{abstract}
\medskip\noindent
{\bf 2000 Mathematics Subject Classification:} 53A30, 53A55;
\par\noindent {\bf Key words:} M\"{o}bius homogeneous submanifolds, DDVV inequality, Wintgen ideal submanifolds, Veronese surfaces, reduction theorem

\vskip 1 cm
\section{Introduction}

A central theme in geometry is to find and characterize those best shapes. It often means to find the optimally immersed submanifolds in a fixed ambient space. Two widely used optimality criteria are the minimization of certain functional(s), and the existence of many symmetries.

From this viewpoint, homogeneous minimal surfaces in real space forms are best submanifolds, which include the Veronese surfaces $S^2\to S^{2k}$ and Clifford type surfaces $\mathbb{R}^2\to S^{2k+1}$ \cite{br1, li1}. In complex space forms there are similar examples \cite{BJ, Calabi}.

In this paper we will consider M\"obius homogeneous, Wintgen ideal submanifolds in M\"obius geometry, which might be regarded also as best submanifolds according to both criteria. To our happy surprise, the classification shows that they are closely related with those homogeneous minimal surfaces mentioned above.

To explain what is a \emph{Wintgen ideal submanifold}, note that an optimality criterion is to consider a universal inequality and find out all the cases when the equality is achieved, which is somewhat similar to the minimization criterion. In this spirit, we are interested in the equality case in the so-called \emph{DDVV inequality} \cite{DDVV1} for a generic submanifold $f:M^m\to \mathbb{Q}^{m+p}(c)$ in a real space form. This inequality is remarkable because it relates the most important intrinsic and extrinsic quantities at an arbitrary point $x\in M$, without any restriction on the dimension/codimension or any further geometric/topological assumptions. This universal inequality was a difficult conjecture in \cite{DDVV1, DDVV}, and was finally proved in \cite{Ge} and \cite{Lu3}. By the suggestion of \cite{chen10,ml} and the characterization of \cite{Ge} about the equality case at an arbitrary point, we make the following definition.

\begin{Definition}
We denote $f:M^m\longrightarrow \mathbb{Q}^{m+p}(c)$ a submanifold of dimension $m$ and codimension $p$ in a real space form of constant sectional curvature $c$. It is a \emph{Wintgen ideal submanifold} if the equality is attained at every point of $M^m$ in the DDVV inequality. This happens if, and only if, at every point $x\in M$ there exists an orthonormal basis $\{e_1,\cdots,e_m\}$ of the tangent plane $T_xM^m$ and an orthonormal basis $\{n_1,\cdots,n_p\}$ of the normal plane $T_x^{\bot}M^m$,
such that the shape operators $\{A_{n_i},i=1,\cdots,p\}$ take the form as below \cite{Ge}:
\begin{equation}\label{form1}
A_{n_1}=
\begin{pmatrix}
\lambda_1 & \mu_0 & 0 & \cdots & 0\\
\mu_0 & \lambda_1 & 0 & \cdots & 0\\
0  & 0 & \lambda_1 & \cdots & 0\\
\vdots & \vdots & \vdots & \ddots & \vdots\\
0  & 0 & 0 & \cdots & \lambda_1
\end{pmatrix},
A_{n_2}=
\begin{pmatrix}
\lambda_2+\mu_0 & 0 & 0 & \cdots & 0\\
0 & \lambda_2-\mu_0 & 0 & \cdots & 0\\
0  & 0 & \lambda_2 & \cdots & 0\\
\vdots & \vdots & \vdots & \ddots & \vdots\\
0  & 0 & 0 & \cdots & \lambda_2
\end{pmatrix},
\end{equation}
$$A_{n_3}=\lambda_3I_m,~~~~ A_{n_r}=0, r\ge 4.$$
\end{Definition}

Wintgen ideal submanifolds are abundant. Wintgen first proved the DDVV inequality for surfaces in $\mathbb{S}^4$, and characterized the equality case \cite{wint}. More general, a surface $f:M^2\longrightarrow \mathbb{Q}^{2+p}_c$ of arbitrary codimension $p$ is Wintgen ideal exactly when the curvature ellipse is a circle at every point \cite{gr}, which is also equivalent to the Hopf differential being isotropic. For more examples see \cite{br,Dajczer,Dajczer1,LiTZ2,Lu}.

An important observation \cite{DDVV, Dajczer1} is that the DDVV inequality as well as the equality case are invariant under M\"{o}bius transformations of the ambient space. Thus it is appropriate to put the study of Wintgen ideal submanifolds in the framework of M\"{o}bius geometry. It follows that Wintgen ideal submanifolds in the sphere $\mathbb{S}^{m+p}$ or hyperbolic space $\mathbb{H}^{m+p}$ are the pre-image of a stereographic projection of Wintgen ideal submanifolds in $\mathbb{R}^{m+p}$. For the same reason it is no restriction when we describe them in the Euclidean space.

Since there are still many possible examples of Wintgen ideal submanifolds up to M\"obius transformatons, it is natural to restrict to the best examples with many symmetries, called \emph{M\"{o}bius homogeneous submanifolds}. This means that for $f:M^m\longrightarrow \mathbb{Q}^{m+p}(c)$ and arbitrary two points $x_1,x_2\in M^m$, there exists a M\"{o}bius transformation $\phi$ of $\mathbb{Q}^{m+p}(c)$ satisfying $\phi\circ f(x_1)=f(x_2)$
and $\phi\circ f(M^m)=f(M^m)$. Such a submanifold is an orbit of a subgroup in the M\"{o}bius transformation group.

Our goal in this paper is to classify M\"{o}bius homogeneous Wintgen ideal submanifolds of dimension $m\geq 3$ in $\mathbb{R}^{m+p}$. Below are some examples.

\begin{ex}\label{ex1}
Let $f: S^2\to S^{2m} (m\geq 2)$ be one of the Veronese surfaces mentioned at the beginning. As is well-known, such examples are totally isotropic and homogeneous with respect to the isometry group of $S^{2m}$. So they are M\"{o}bius homogeneous Wintgen ideal submanifolds. They come from the irreducible orthogonal representations of $SO(3)$.
\end{ex}

\begin{ex}\label{ex2}
Let $f:\mathbb{R}^2\to S^{2m-1}$ be a Clifford-type surface. That means it is homogeneous, flat, and minimal. It comes from a subgroup of the maximal torus group $T^m\to SO(2m)$. Following \cite{br1}, it is given by $f=(f^1,\cdots,f^{2m})$
\begin{equation}\label{eq-clifford}
\begin{split}
&f^{2k-1}(x,y)=r_k\cos(x\cos\theta_k+y\sin\theta_k),\\
&f^{2k}(x,y)=r_k\sin(x\cos\theta_k+y\sin\theta_k),~~~(1\leq k\leq m)
\end{split}
\end{equation}
where $(r_k,\theta_k)$ are $m$ real numbers satisfying $r_k>0$, $\{\theta_k\}$ are distinctive modulo $k\pi$, and \[r_1^2+\cdots+r_m^2=1, ~~ \e^{2i\theta_1}r_1^2+\cdots+\e^{2i\theta_m}r_m^2=0.\]
Clearly, the flat minimal surface $f:R^2\to S^{2m-1}$ is an orbit of a $2$-dimensional abelian subgroup of $SO(2m)$. By direct computation we know that
$f:R^2\to S^{2m-1}$ is Wintgen ideal if, and only if,
\[\e^{4i\theta_1}r_1^2+\cdots+\e^{4i\theta_m}r_m^2=0.\]
\end{ex}

\begin{ex}\label{ex3}
Let $f: \mathbb{C}P^1\to \mathbb{C}P^m$ be a Veronese 2-sphere \cite{BJ}.
Let $\pi: S^{2m+1}\to \mathbb{C}P^m$ be the projection map of the Hopf bundle. Then
$\pi^{-1}\circ f:\mathbb{C}P^1\to S^{2m+1}$ is a M\"{o}bius homogeneous Wintgen ideal submanifold \cite{DDVV1}.

It comes from the irreducible unitary representations of $SU(2)$. When $m$ is an even number, this submanifold factors as an embedded $SO(3)=\mathbb{R}P^3$;
otherwise it is an embedded $SU(2)=S^3$.
\end{ex}

\begin{ex}\label{ex4}
The cone over an immersed submanifold $u:M^r\longrightarrow S^{r+p}\subset \mathbb{R}^{r+p+1}$ is
\begin{equation*}
\begin{split}
&f:R^+\times \mathbb{R}^{m-r-1}\times M^r\longrightarrow \mathbb{R}^{m+p},\\
&~~~~~~f(t,y,u)=(y,tu),
\end{split}
\end{equation*}
It is a Wintgen ideal submanifold if (and only if) $u$ is a minimal Wintgen ideal submanifold in $\mathbb{S}^{r+p}$. (See Section~4 for the proof.)
\end{ex}

Our main theorem is as below.
\begin{Theorem}\label{the1}
Let $f:M^m\longrightarrow \mathbb{R}^{m+p} (m\geq3)$ be a M\"{o}bius homogeneous Wintgen ideal submanifold. Then locally $f$ is M\"{o}bius equivalent to
\begin{description}
\item (i) a cone over a Veronese surface in $S^{2k}$,
\item (ii) a cone over a Clifford-type surface in $S^{2k+1}$,
\item (iii) or a cone over $\pi^{-1}\circ f: \mathbb{C}P^1\to S^{2k-1}$,
\item (iv) or an affine subspace in $\mathbb{R}^{m+p}$.
\end{description}
\end{Theorem}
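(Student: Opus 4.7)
The plan is to work within the Möbius-invariant framework for submanifolds developed by C.-P. Wang and to exploit the canonical form (1.1) to read off distinguished tangent and normal distributions on $M^m$. At each point the shape-operator data of a Wintgen ideal submanifold single out a 2-plane $\mathbb{D}_x=\mathrm{span}(e_1,e_2)\subset T_xM$ and its orthogonal complement $\mathbb{D}_x^{\perp}=\mathrm{span}(e_3,\ldots,e_m)$; only $e_1,e_2$ and the two normals $n_1,n_2$ carry the genuinely non-umbilical coefficient $\mu_0$, while all remaining tangent directions and all normal directions beyond $n_1,n_2,n_3$ look umbilical. Möbius homogeneity forces every Möbius-invariant scalar to be constant, and in particular the Möbius normalizations of $\mu_0$ and of the three principal data $\lambda_1,\lambda_2,\lambda_3$ are constants on $M$.

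The first main step is a reduction theorem, to be proved by feeding (1.1) into the Gauss, Codazzi and Ricci equations in Möbius form. The target conclusion is that $\mathbb{D}^{\perp}$ is an integrable, totally umbilical distribution whose leaves $f$ sends into round spheres or affine planes. Since cones, cylinders and rotational submanifolds over a common base are mutually Möbius equivalent, this allows us, locally and after a suitable Möbius transformation, to write $f$ as a cone over a 2-dimensional Wintgen ideal submanifold $\bar f\colon N^2\to \mathbb{Q}^{2+q}(c)$. If $\bar f$ is totally umbilical we land in case (iv); in every other case the cone construction together with the Möbius homogeneity of $f$ implies that $\bar f$ is itself a Möbius homogeneous Wintgen ideal surface.

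It then remains to classify the non-trivial Möbius homogeneous Wintgen ideal surfaces. Because such surfaces have circular curvature ellipse, equivalently isotropic Hopf differential, constancy of the Möbius scalars forces $\bar f$, up to Möbius equivalence, to be either a minimal super-minimal homogeneous surface in a sphere or the horizontal lift to $S^{2k+1}$ of a holomorphic curve in $\mathbb{C}P^k$. Matching this restricted list against the known classifications of homogeneous minimal surfaces in $S^n$ (Veronese and Clifford type) and of Veronese embeddings $\mathbb{C}P^1\to \mathbb{C}P^k$ yields precisely the bases of cases (i), (ii) and (iii). The main obstacle is the first step: establishing integrability and umbilicity of $\mathbb{D}^{\perp}$ and identifying its spherical leaves, because the Möbius normal connection and the derivatives of $\mu_0$ and $\lambda_i$ couple the $\{e_1,e_2\}$-directions to the $\{e_3,\ldots,e_m\}$-directions, and Möbius homogeneity together with the Codazzi identity must be applied carefully to kill these couplings. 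Once the reduction is in place, the 2-dimensional classification is essentially a bookkeeping exercise against classical results.
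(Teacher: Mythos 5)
Your plan has a genuine gap at its central step: the claimed reduction to a cone over a \emph{two}-dimensional base is false in general, and the failure is exactly what produces case (iii) of the theorem. In the paper's notation, after normalizing the frames the obstruction is the covariant derivative component $B^1_{11,3}$ of the M\"obius second fundamental form, which couples the plane $\mathbb{D}=\mathrm{span}\{E_1,E_2\}$ to $E_3$ (one has $\omega_{13}=-\frac{B^1_{11,3}}{\mu}\omega_2$, $\omega_{23}=\frac{B^1_{11,3}}{\mu}\omega_1$). M\"obius homogeneity only forces this invariant to be \emph{constant}, not zero, and it is a nonzero constant precisely on the Hopf-lift examples: the preimage $\pi^{-1}\circ f$ of a Veronese curve $\mathbb{C}P^1\to\mathbb{C}P^n$ under the Hopf fibration is a $3$-dimensional M\"obius homogeneous Wintgen ideal submanifold that is \emph{not} a cone over any surface (nor is any cone over it). When $B^1_{11,3}\neq 0$ the integrable distribution is $\mathrm{span}\{E_1,E_2,E_3\}$, not $\mathbb{D}$, so the reduction only reaches a $3$-dimensional base; the paper then needs a separate argument (a parallel complex structure ${\bf J}$ on the fixed spacelike subspace, holomorphicity of $[\eta+iY_3]$, and Calabi's rigidity of constant-curvature holomorphic curves) to identify this base as the Hopf lift of a Veronese $\mathbb{C}P^1$. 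Your classification step, which only matches $2$-dimensional bases against known surface results, would either miss case (iii) entirely or misattribute it to a ``horizontal lift'' surface, which is a different (2-dimensional) object from the $3$-dimensional Hopf preimage appearing in the theorem.

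A second, independent gap: you treat the vanishing of the coupling terms as a consequence of homogeneity plus Codazzi, but the key fact that the M\"obius form $\Phi$ vanishes is not automatic and occupies an entire section of the paper. Homogeneity only gives that $C^1_1$ is constant; showing this constant is zero requires a contradiction argument built on a canonical form of the normal connection (an infinite ladder of $2$-plane bundles with coefficients $a_k$, whose curvature identities force $2a_k^2<-2\mu^2$ if $\Phi\neq 0$). Without $\Phi=0$ the structure equations do not close up on the subspaces $V=\mathrm{span}\{A_1Y+N,Y_1,Y_2,\xi_\alpha\}$ and $V^{\perp}$, so even the reduction-to-a-cone step of your outline (in the good case $B^1_{11,3}=0$) cannot be carried out as described. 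In short: the overall architecture (reduce to a low-dimensional base, then quote classifications) is the right one, but the base can be $3$-dimensional, and both the $\Phi=0$ step and the $3$-dimensional identification require substantive arguments your proposal does not supply.
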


Our conclusions do not extend to M\"obius homogeneous Wintgen ideal surfaces, i.e., when $m=2$. In $S^4$, any of them is M\"obius equivalent to part of the Veronese surface. This follows from the classification of Willmore surfaces with constant M\"obius curvature \cite{MaWang}, or from an unpublished old manuscript by H. Li, F. Wu and the third author which gave a classification of all M\"obius homogeneous surfaces in $S^4$). On the other hand, we can modify Example~\ref{ex2} to obtain homogeneous, Wintgen ideal (i.e., isotropic), isometric immersions $\mathbb{R}^2$ in $S^5$ which are not minimal. Whether there exist other kinds of examples are still unknown to us.\\

In the rest part of this introduction, we give an overview of the proof and the whole structure of this paper.

We start by reviewing the submanifold theory in M\"{o}bius geometry in Section~2. In Section~3 we restrict to Wintgen ideal submanifolds. Due to the specific, simple structure of the (M\"obius) second fundamental form, we derive the explicit expressions of the M\"{o}bius invariants.

From the statement of the main theorem one can see the importance of the construction by cones (Example~\ref{ex4}). This is described in detail in Section~4. In particular, we show that the cone $f$ is Wintgen ideal if and only if the original submanifold $u$ in the sphere is minimal and Wintgen ideal.

In Section~5, we start to utilize the assumption of M\"obius homogeneity. The first structure result is that when the dimension $m\geq 3$, the M\"{o}bius form $\Phi$ of a M\"{o}bius homogeneous Wintgen ideal submanifold vanishes. This is proved by contradiction and detailed analysis of the M\"obius invariants using the integrable equations.

One crucial ingredient in the discussions of Section~5 and 6 is to re-choose the tangent and normal frames so that the normal connection takes an elegant form. This is the main content of Lemma~\ref{le4}, Proposition~\ref{integ0} and \ref{3d}.

In Section~7 we prove a somewhat surprising reduction result, namely, if the dimension $m\geq 3$, the M\"{o}bius homogeneous Wintgen ideal submanifold is a cone over a surface or a three dimensional submanifold in $\mathbb{S}^{m+p}$.

In Section~8, we give the proof of our classification theorem. In particular, the only three dimensional M\"obius homogeneous and Wintgen ideal examples which are not cones over surfaces must be given by the Hopf bundle over the Veronese surfaces in $\mathbb{C}P^n$.

 \vskip 1 cm
\section{Submanifolds theory in M\"obius geometry}

In this section we briefly review the theory of submanifolds
in M\"obius geometry. For details we refer to \cite{CPWang} and \cite{liu}.

Let $\mathbb{R}^{m+p+2}_1$ be the Lorentz space with inner
product $\langle \cdot,\cdot\rangle $ defined by
\[
\langle Y,Z\rangle =-Y_0Z_0+Y_1Z_1+\cdots+Y_{m+p+1}Z_{m+p+1},
\]
where $Y=(Y_0,Y_1,\cdots,Y_{m+p+1}),Z=(Z_0,Z_1,\cdots,Z_{m+p+1})\in
\mathbb{R}^{m+p+2}$.

Let $f:M^m\rightarrow \mathbb{R}^{m+p}$ be a submanifold without umbilics and
assume that $\{e_i\}$ is an orthonormal basis with respect to the
induced metric $I=df\cdot df$ with $\{\theta_i\}$ the dual basis.
Let $\{n_{\alpha}|1\le \alpha\le p\}$ be a local orthonormal basis for the
normal bundle. As usual we denote the second fundamental form and
the mean curvature of $f$ as
\[
II=\sum_{ij,\alpha}h^{\alpha}_{ij}\theta_i\otimes\theta_j n_{\alpha},
~~H=\frac{1}{m}\sum_{j,\alpha}h^{\alpha}_{jj}n_{\alpha}
=\sum_{\alpha}H^{\alpha}n_{\alpha}.
\]
We define the M\"{o}bius position vector $Y:
M^m\rightarrow \mathbb{R}^{m+p+2}_1$ of $f$ by
\[
Y=\rho\left(\frac{1+|f|^2}{2},\frac{1-|f|^2}{2},f\right),~~
~~\rho^2=\frac{m}{m-1}\left|II-\frac{1}{m} tr(II)I\right|^2~.
\]
It is known that $Y$ is a well-defined canonical lift of $f$. Two submanifolds $f,\bar{f}: M^m\rightarrow \mathbb{R}^{m+p}$
are M\"{o}bius equivalent if there exists $T$ in the Lorentz group
$\mathbb{O}(m+p+1,1)$ in $\mathbb{R}^{m+p+2}_1$ such that $\bar{Y}=YT.$ It follows
immediately that
\[
g=\langle dY,dY\rangle =\rho^2 df\cdot df
\]
is a M\"{o}bius invariant, called the M\"{o}bius metric of $f$.

Let $\Delta$ be the Laplacian with respect to $g$. Define
\[
N=-\frac{1}{m}\Delta Y-\frac{1}{2m^2}\langle \Delta Y,\Delta Y\rangle Y,
\]
which satisfies
\[
\langle Y,Y\rangle =0=\langle N,N\rangle , ~~\langle N,Y\rangle =1~.
\]
Let $\{E_1,\cdots,E_m\}$ be a local orthonormal basis for $(M^m,g)$
with dual basis $\{\omega_1,\cdots,\omega_m\}$. Write
$Y_j=E_j(Y)$. Then we have
\[
\langle Y_j,Y\rangle =\langle Y_j,N\rangle =0, ~\langle Y_j,Y_k\rangle =\delta_{jk}, ~~1\leq j,k\leq m.
\]
We define
\[
\xi_\alpha=H^\alpha\left(\frac{1+|f|^2}{2},\frac{1-|f|^2}{2},f\right)
+\left(f\cdot n_\alpha,-f\cdot n_\alpha,n_\alpha\right).
\]
Then $\{\xi_{1},\cdots,\xi_p\}$ be the orthonormal basis of the
orthogonal complement of $\mathrm{Span}\{Y,N,Y_j|1\le j\le m\}$.
And $\{Y,N,Y_j,\xi_{\alpha}\}$ form a moving frame in $R^{m+p+2}_1$
along $M^m$.
\begin{Remark}
Geometrically, $\xi_\alpha$ corresponds to the unique sphere tangent to $M^m$ at one point $x$ with normal vector $n_\alpha$ and the same mean curvature $H^\alpha(x)$. We call $\{\xi_\alpha\}$ the mean curvature spheres of $M^m$.
\end{Remark}
We will use the following range of indices in this section: $1\leq
i,j,k\leq m; 1\leq \alpha,\beta\leq p$. We can write the structure equations
as below:
\begin{eqnarray*}
&&dY=\sum_i \omega_i Y_i,\\
&&dN=\sum_{ij}A_{ij}\omega_i Y_j+\sum_{i,\alpha} C^{\alpha}_i\omega_i \xi_{\alpha},\\
&&d Y_i=-\sum_j A_{ij}\omega_j Y-\omega_i N+\sum_j\omega_{ij}Y_j
+\sum_{j,\alpha} B^{\alpha}_{ij}\omega_j \xi_{\alpha},\\
&&d \xi_{\alpha}=-\sum_i C^{\alpha}_i\omega_i Y-\sum_{ij}\omega_i
B^{\alpha}_{ij}Y_j +\sum_{\beta} \theta_{\alpha\beta}\xi_{\beta},
\end{eqnarray*}
where $\omega_{ij}$ are the connection $1$-forms of the M\"{o}bius
metric $g$ and $\theta_{\alpha\beta}$ the normal connection $1$-forms. The
tensors
\[
{\bf A}=\sum_{ij}A_{ij}\omega_i\otimes\omega_j,~~ {\bf
B}=\sum_{ij\alpha}B^{\alpha}_{ij}\omega_i\otimes\omega_j \xi_{\alpha},~~
\Phi=\sum_{j\alpha}C^{\alpha}_j\omega_j \xi_{\alpha}
\]
are called the Blaschke tensor, the M\"{o}bius second fundamental
form and the M\"{o}bius form of $x$, respectively. The covariant
derivatives of $C^{\alpha}_i, A_{ij}, B^{\alpha}_{ij}$ are defined by
\begin{eqnarray*}
&&\sum_j C^{\alpha}_{i,j}\omega_j=d C^{\alpha}_i+\sum_j C^{\alpha}_j\omega_{ji}
+\sum_{\beta} C^{\beta}_j\theta_{\beta\alpha},\\
&&\sum_k A_{ij,k}\omega_k=d A_{ij}+\sum_k A_{ik}\omega_{kj}+\sum_k A_{kj}\omega_{ki},\\
&&\sum_k B^{\alpha}_{ij,k}\omega_k=d B^{\alpha}_{ij}+\sum_k
B^{\alpha}_{ik}\omega_{kj} +\sum_k B^{\alpha}_{kj}\omega_{ki}+\sum_{\beta}
B^{\beta}_{ij}\theta_{\beta\alpha}.
\end{eqnarray*}
The integrability conditions for the structure equations are given by
\begin{eqnarray}
&&A_{ij,k}-A_{ik,j}=\sum_{\alpha}(B^{\alpha}_{ik}C^{\alpha}_j-B^{\alpha}_{ij}C^{\alpha}_k),\label{equa1}\\
&&C^{\alpha}_{i,j}-C^{\alpha}_{j,i}=\sum_k(B^{\alpha}_{ik}A_{kj}-B^{\alpha}_{jk}A_{ki}),\label{equa2}\\
&&B^{\alpha}_{ij,k}-B^{\alpha}_{ik,j}=\delta_{ij}C^{\alpha}_k-\delta_{ik}C^{\alpha}_j,\label{equa3}\\
&&R_{ijkl}=\sum_{\alpha}(B^{\alpha}_{ik}B^{\alpha}_{jl}-B^{\alpha}_{il}B^{\alpha}_{jk})
+\delta_{ik}A_{jl}+\delta_{jl}A_{ik}
-\delta_{il}A_{jk}-\delta_{jk}A_{il},\label{equa4}\\
&&R^{\perp}_{\alpha\beta ij}=\sum_k
(B^{\alpha}_{ik}B^{\beta}_{kj}-B^{\beta}_{ik}B^{\alpha}_{kj}). \label{equa5}
\end{eqnarray}
Here $R_{ijkl}$ denote the curvature tensor of $g$.
Other restrictions on tensors $\bf A, B$ are
\begin{eqnarray}
&&\sum_j B^{\alpha}_{jj}=0, ~~~\sum_{ijr}(B^{\alpha}_{ij})^2=\frac{m-1}{m}, \label{equa7}\\
&& tr{\bf A}=\sum_j A_{jj}=\frac{1}{2m}(1+m^2\kappa).\label{equa8}
\end{eqnarray}
Where $\kappa=\frac{1}{n(n-1)}\sum_{ij}R_{ijij}$ is its normalized
M\"{o}bius scalar curvature.
 We know that all coefficients in the
structure equations are determined by $\{g, {\bf B}\}$ and the
normal connection $\{\theta_{\alpha\beta}\}$. Coefficients of M\"{o}bius invariants and
the isometric invariants are also related by \cite{CPWang}
\begin{align}
B^{\alpha}_{ij}&=\rho^{-1}(h^{\alpha}_{ij}-H^{\alpha}\delta_{ij}),\label{2.22}\\
C^{\alpha}_i&=-\rho^{-2}[H^{\alpha}_{,i}+\sum_j(h^{\alpha}_{ij}-H^{\alpha}\delta_{ij})e_j(\ln\rho)]. \label{2.23}
\end{align}

\section{M\"{o}bius invariants on Wintgen ideal submanifolds}
A submanifold $f:M^{m}\to
\mathbb{R}^{m+p}$ is a Wintgen ideal submanifold if and only if, at
each point of $M^{m}$, there is a suitable frame such that the second
fundamental form has the form (\ref{form1}). If $\mu_0=0$ in (\ref{form1}), then the Wintgen ideal submanifold is totally umbilical submanifold.
Next we consider non-umbilical Wintgen ideal submanifolds, that is $\mu_0\neq 0$ on $M^m$ and $m\geq 3$.

Since $\mu_0\neq 0$, we can choose a local orthonormal basis $\{E_1,\cdots,E_m\}$ of $TM^m$ with respect to the M\"{o}bius metric $g$ and a local orthonormal basis $\{\xi_1,\cdots,\xi_p\}$ of $T^{\bot}M^m$, such that the coefficients of the M\"obius second fundamental form ${\bf
B}$
have the form
\begin{equation}\label{eq-equality}
B^{1}=
\begin{pmatrix}
0 & \mu & 0 & \cdots & 0\\
\mu & 0 & 0 & \cdots & 0\\
0  & 0 & 0 & \cdots & 0\\
\vdots & \vdots & \vdots & \ddots & \vdots\\
0  & 0 & 0 & \cdots & 0
\end{pmatrix},~~~
B^{2}=
\begin{pmatrix}
\mu & 0 & 0 & \cdots & 0\\
0 & -\mu & 0 & \cdots & 0\\
0  & 0 & 0 & \cdots & 0\\
\vdots & \vdots & \vdots & \ddots & \vdots\\
0  & 0 & 0 & \cdots & 0
\end{pmatrix};~~
B^{\alpha}=0,~~\alpha\ge 3.
\end{equation}
By \eqref{equa7}, the norm of ${\bf B}$ is constant and
$\mu=\sqrt{\frac{m-1}{4m}}$. Clearly the distribution $\mathbb{D}=span\{E_1,E_2\}$ is well-defined.
For convenience we adopt the convention
below on the range of indices:
\[
1\le i,j,k,l\le m,~~ 3\leq a,b,c\leq m,~~1\le \alpha,\beta,\gamma \le
p.
\]

First we compute the covariant derivatives of $B^{\alpha}_{ij}$.
Since the M\"obius second fundamental form ${\bf B}$ has the form (\ref{eq-equality}), using the definition of the covariant derivatives of $B^{\alpha}_{ij}$, we have
\begin{equation}\label{bb1}
\begin{split}
&B^{\delta}_{ab,k}=0,~~~ 1\leq \delta\leq p, 1\leq k\leq m;~~B^{\alpha}_{1a,i}=0,~~~  B^{\alpha}_{2a,i}=0,~~ \alpha\geq 3,\\
&\theta_{1\alpha}=\frac{B^{\alpha}_{12,1}}{\mu}\omega_1+\frac{B^{\alpha}_{12,2}}{\mu}\omega_2,~~~~
\theta_{2\alpha}=\frac{B^{\alpha}_{11,1}}{\mu}\omega_1+\frac{B^{\alpha}_{11,2}}{\mu}\omega_2, \alpha\geq 3.
\end{split}
\end{equation}
\begin{equation}\label{bb2}
\omega_{2a}=\sum_i\frac{B^1_{1a,i}}{\mu}\omega_i=-\sum_i\frac{B^2_{2a,i}}{\mu}\omega_i,~~ \omega_{1a}=\sum_i\frac{B^1_{2a,i}}{\mu}\omega_i=\sum_i\frac{B^2_{1a,i}}{\mu}\omega_i.
\end{equation}
\begin{equation}\label{bb3}
\begin{split}
&2\omega_{12}+\theta_{12}=\sum_i\frac{-B^1_{11,i}}{\mu}\omega_i=\sum_i\frac{B^1_{22,i}}{\mu}\omega_i=\sum_i\frac{B^2_{12,i}}{\mu}\omega_i,\\
&B^1_{12,i}=0,~~B^2_{11,i}=B^2_{22,i}=0.
\end{split}
\end{equation}

It follows from (\ref{equa3}) and (\ref{bb1})
that, when $\alpha\geq 3$,
$$ C^{\alpha}_1=B^{\alpha}_{aa,1}-B^{\alpha}_{a1,a}=0,~~
C^{\alpha}_2=B^{\alpha}_{aa,2}-B^{\alpha}_{a2,a}=0;$$
$$ C^{\alpha}_a=B^{\alpha}_{11,a}-B^{\alpha}_{1a,1}=B^{\alpha}_{11,a},
~~C^{\alpha}_a=B^{\alpha}_{22,a}-B^{\alpha}_{2a,2}=B^{\alpha}_{22,a}.
$$
Since $\sum_iB^{\delta}_{ii,k}=0, 1 \leq \delta \leq p,1\leq k\leq m $, we have
$$C_i^{\alpha}=0,~~\alpha\geq 3.$$
From (\ref{bb2}) and (\ref{bb3}), we obtain
$$
B^1_{2a,2}=B^1_{22,a}=B^2_{1a,2},~~
B^2_{1a,1}=0,~~B^2_{2a,2}=0.$$
This implies that $C^1_a=B^1_{22,a}-B^1_{2a,2}=0$. Similarly $C^2_a=0.$

The other coefficients of $\{C^{r}_j\}$ are obtained similarly as
below:
\begin{equation}\label{cc1}
\begin{split}
C^{1}_1=-B^{1}_{1a,a}=-\mu\omega_{2a}(e_a),
~~C^{2}_2=-B^{2}_{2a,a}=\mu\omega_{2a}(e_a), \\
C^{1}_2=-B^{1}_{2a,a}=-\mu\omega_{1a}(e_a), ~~C^{2}_1=-B^{2}_{1a,a}=-\mu\omega_{1a}(e_a).
\end{split}
\end{equation}
In particular we have
\begin{equation}
C^{1}_1=-C^{2}_2,~~C^{1}_2=C^{2}_1.
\end{equation}

\begin{lemma}\label{le1}
In the sub-bundles $Span\{E_1,E_2\}$ and $Span\{\xi_1,\xi_2\}$,
we can always choose new orthonormal basis $\{E_1,E_2\}$ and $\{\xi_1,\xi_2\}$ such that the M\"{o}bius second fundamental form ${\bf B}$ still takes the form \eqref{eq-equality}, and the coefficients of the M\"{o}bius form
satisfy $$C^1_1=-C^2_2,~~C^1_2=C^2_1=0, ~~ C^1_a=C^2_a=0,~~~C^{\alpha}_i=0,~~~ \alpha\geq 3.$$
\end{lemma}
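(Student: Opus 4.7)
The discussion immediately preceding the lemma already establishes, for the initial choice of frames satisfying \eqref{eq-equality}, that $C^\alpha_i=0$ whenever $\alpha\ge 3$, that $C^1_a=C^2_a=0$ for $a\ge 3$, and that $C^1_1=-C^2_2$, $C^1_2=C^2_1$. So the only nontrivial task is to kill the single scalar $q:=C^1_2=C^2_1$ by a change of frames that still respects \eqref{eq-equality}.

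My plan is to exploit the gauge freedom inside the two distinguished $2$-planes $\mathbb{D}=\mathrm{Span}\{E_1,E_2\}$ and $\mathrm{Span}\{\xi_1,\xi_2\}$. Let $R^\tau_\theta$ be a rotation of $\{E_1,E_2\}$ by angle $\theta$ (extended by the identity on the remaining $E_a$), and $R^\nu_\phi$ a rotation of $\{\xi_1,\xi_2\}$ by angle $\phi$ (extended by the identity on the remaining $\xi_\alpha$). A direct $2\times 2$ conjugation computation on the two nontrivial blocks of $\mathbf{B}$ shows that $R^\tau_\theta$ acts on the pair $(B^1,B^2)$ as a rotation by $2\theta$, while $R^\nu_\phi$ acts as a rotation by $\phi$. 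Hence the combined transformation with $\phi=-2\theta$ fixes \eqref{eq-equality} exactly and leaves a one-parameter family of admissible frame changes.

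I would then track the four components $C^\beta_i$ with $\beta,i\in\{1,2\}$ under this combined rotation using the standard tensorial transformation law. Writing $p:=C^1_1=-C^2_2$ and $q:=C^1_2=C^2_1$, one computes after imposing $\phi=-2\theta$ that
\[
\tilde C^1_2 \;=\; p\sin\theta+q\cos\theta.
\]
Because this is a nonzero linear combination of $\sin\theta$ and $\cos\theta$ as $\theta$ varies, a suitable choice of $\theta$ always makes $\tilde C^1_2$ vanish (if $(p,q)=(0,0)$ there is nothing to do).

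It remains to verify that the other conclusions persist in the new frame. The identities $\tilde C^1_1=-\tilde C^2_2$ and $\tilde C^1_2=\tilde C^2_1$ follow from repeating the derivation in the preceding paragraphs, since that derivation used only \eqref{equa3} together with the canonical form of $\mathbf{B}$, both of which still hold. The vanishing $\tilde C^\alpha_i=0$ for $\alpha\ge 3$ and $\tilde C^1_a=\tilde C^2_a=0$ is automatic, since the chosen gauge rotations act trivially on those index ranges. There is no genuine obstacle: the core observation is simply that preservation of \eqref{eq-equality} forces the coupling $\phi=-2\theta$, after which a single free parameter suffices to annihilate $q$.
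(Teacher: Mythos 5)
Your proposal is correct and takes essentially the same route as the paper: a simultaneous rotation by $\theta$ in $\mathrm{Span}\{E_1,E_2\}$ and by $\varphi=-2\theta$ in $\mathrm{Span}\{\xi_1,\xi_2\}$ preserves \eqref{eq-equality}, and your formula $\tilde C^1_2=C^1_1\sin\theta+C^1_2\cos\theta$ agrees exactly with the paper's transformation law, so a single choice of $\theta$ annihilates $\tilde C^1_2=\tilde C^2_1$ while the remaining vanishing statements carry over automatically.
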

\begin{proof} Under a new basis given as below:
\[
\left\{
\begin{array}{cc}
\tilde{E}_1 =& \cos\theta E_1+\sin\theta E_2,\\
\tilde{E}_2 =&-\sin\theta E_1+\cos\theta E_2,
\end{array}
\right.
\left\{
\begin{array}{cc}
\tilde{\xi}_1 =& \cos\varphi \xi_1+\sin\varphi \xi_2,\\
\tilde{\xi}_2 =& -\sin\varphi \xi_1+\cos\varphi \xi_2,
\end{array}
\right.
\]
we have
\[
\left(
\begin{array}{cc}
\tilde{B}^1_{11}&\tilde{B}^1_{12}\\
\tilde{B}^1_{21}&\tilde{B}^1_{22}
\end{array}
\right)=
\left(
\begin{array}{cc}
\sin(2\theta+\varphi)\mu & \cos(2\theta+\varphi)\mu\\
\cos(2\theta+\varphi)\mu & -\sin(2\theta+\varphi)\mu
\end{array}
\right),
\]
\[
\left(
\begin{array}{cc}
\tilde{B}^2_{11}&\tilde{B}^2_{12}\\
\tilde{B}^2_{21}&\tilde{B}^2_{22}
\end{array}
\right)=
\left(
\begin{array}{cc}
\cos(2\theta+\varphi)\mu & -\sin(2\theta+\varphi)\mu\\
-\sin(2\theta+\varphi)\mu & -\cos(2\theta+\varphi)\mu
\end{array}
\right),
\]
\[
\left(
\begin{array}{cc}
\tilde{C}^1_1&\tilde{C}^1_2\\
\tilde{C}^2_1&\tilde{C}^2_2
\end{array}
\right)=
\left(
\begin{array}{cc}
\cos(\theta+\varphi)C^1_1+\sin(\theta+\varphi)C^1_2 & \cos(\theta+\varphi)C^1_2-\sin(\theta+\varphi)C^1_1\\
\cos(\theta+\varphi)C^1_2-\sin(\theta+\varphi)C^1_1 & -\cos(\theta+\varphi)C^1_1-\sin(\theta+\varphi)C^1_2
\end{array}
\right).
\]
Let $\varphi=-2\theta$, then the coefficients of the M\"{o}bius second fundamental form ${\bf B}$ satisfy \eqref{eq-equality}. Clearly there exists a value $\theta$
such that $\tilde{C}^1_2=\tilde{C}^2_1=0.$
\end{proof}

\begin{lemma}\label{le2}
We can choose a local orthonormal basis basis $\{E_3,\cdots,E_m\}$ such that
$$B^1_{11,4}=B^1_{11,5}=\cdots=B^1_{11,m}=0.$$
\end{lemma}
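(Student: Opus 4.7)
The plan is to exploit the remaining gauge freedom: rotate only the frame of $\mathbb{D}^\perp=\mathrm{span}\{E_3,\ldots,E_m\}$, while keeping both $\{E_1,E_2\}$ and the normal basis $\{\xi_1,\ldots,\xi_p\}$ fixed. Any such rotation automatically preserves the matrix form \eqref{eq-equality} and the normalizations from Lemma \ref{le1}, because those conditions involve only the distinguished indices $\{1,2\}$ on the tangent and normal sides; the coefficients with indices in $\mathbb{D}^\perp$ that were already zero stay zero, since they transform tensorially among themselves.

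First I would read off an explicit formula for $B^1_{11,a}$ ($a\ge 3$) from the definition of covariant derivative. Since $B^1_{11}\equiv 0$ and the only nonzero entries $B^1_{1k}$ and $B^\beta_{11}$ with $k,\beta\in\{1,2\}$ are the constants $\pm\mu$, the defining relation collapses to
$$\sum_k B^1_{11,k}\,\omega_k \;=\; 2\mu\,\omega_{21}+\mu\,\theta_{21},$$
so that $B^1_{11,a}=2\mu\,\omega_{21}(E_a)+\mu\,\theta_{21}(E_a)$ for $a\ge 3$ (a formula already implicit in \eqref{bb3}). The crucial observation is that the 1-forms $\omega_{21}$ and $\theta_{21}$ are determined solely by the already-fixed data $\{E_1,E_2\}$ and $\{\xi_1,\xi_2\}$, independently of which orthonormal basis we pick in $\mathbb{D}^\perp$.

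Consequently, under any orthogonal change $\tilde E_a=\sum_b U_{ab}E_b$ of frame in $\mathbb{D}^\perp$, the tuple $(B^1_{11,a})_{a\ge 3}$ transforms as a covector, $\tilde B^1_{11,a}=\sum_b U_{ab}\,B^1_{11,b}$. The conclusion follows by the standard local trick of taking $\tilde E_3$ to be the unit vector in the direction of this covector and completing it to a smooth orthonormal basis $\tilde E_4,\ldots,\tilde E_m$; then $\tilde B^1_{11,a}=0$ for all $a\ge 4$. Where the covector vanishes identically, any orthonormal completion works. The only mild technical issue is smoothness at points where the covector vanishes but is nonzero nearby; since the statement is local we may always shrink to a neighborhood on which the covector is either identically zero or nowhere zero (and under the eventual M\"obius-homogeneity hypothesis this dichotomy is automatic and global), so no real obstruction arises.
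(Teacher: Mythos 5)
Your proposal is correct and is essentially the paper's own argument: the paper likewise forms the vector $E=\sum_{a\ge 3}B^1_{11,a}E_a$ and, when $E\neq 0$, rotates the frame of $\mathrm{Span}\{E_3,\dots,E_m\}$ so that $\tilde E_3=E/|E|$, which kills the remaining components. Your additional remarks (the explicit identity $\sum_k B^1_{11,k}\omega_k=2\mu\,\omega_{21}+\mu\,\theta_{21}$, the covector transformation law, and the preservation of the normalizations of Lemma \ref{le1}) are correct elaborations of the same one-step idea.
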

\begin{proof} Let $E=\sum_{a=3}^mB^1_{11,a}E_a$. If $E=0,$, then the Lemma is true. If $E\neq 0,$ then we can choose a local orthonormal basis $\{\tilde{E}_3,\cdots,\tilde{E}_m\}$ in $Span\{E_3,\cdots,E_m\}$
such that $\tilde{E}_3=\frac{E}{|E|}$. Clearly, under this basis $B^1_{11,4}=\cdots=B^1_{11,m}=0$ as desired.
\end{proof}

From (\ref{bb1}), (\ref{bb2}), (\ref{bb3}), Lemma\ref{le1} and Lemma\ref{le2},  we write out the connection forms with respect to the local orthonormal basis $\{E_1,E_2,\cdots,E_m\}$:
\begin{equation}\label{conne0}
\begin{split}
&2\omega_{12}+\theta_{12}=\frac{C^1_1}{\mu}\omega_1-\frac{B^1_{11,3}}{\mu}\omega_3,\\
&\omega_{13}=-\frac{B^1_{11,3}}{\mu}\omega_2,~~~~\omega_{1i}=0, i\geq 4,\\
&\omega_{23}=\frac{B^1_{11,3}}{\mu}\omega_1-\frac{C^1_1}{\mu}\omega_3, ~~\omega_{2i}=-\frac{C^1_1}{\mu}\omega_i, i\geq 4.
\end{split}
\end{equation}
Combining  $C^1_2=0$ and the definition of $C^{\alpha}_{i,j}$,
we have $$C^1_1(\omega_{12}+\theta_{12})=\sum_k C^1_{2,k}\omega_k.$$
Combining (\ref{conne0}), we obtain that
\begin{equation}\label{conne2}
C^1_1\omega_{12}=\frac{(C^1_1)^2}{\mu}\omega_1-\frac{C^1_1B^1_{11,3}}{\mu}\omega_3-\sum_kC^1_{2,k}\omega_k.
\end{equation}
Using $d\omega_{ij}-\sum_k\omega_{ik}\wedge\omega_{kj}=-\frac{1}{2}\sum_{kl}R_{ijkl}\omega_k\wedge\omega_l$, (\ref{conne0}) and (\ref{conne2}),
we have the following equations
\begin{equation}\label{curve1}
\begin{split}
\sum_{k<l}R_{13kl}\omega_k\wedge\omega_l&=\frac{dB^1_{11,3}}{\mu}\wedge\omega_2+\sum_k\frac{C^1_{2,k}}{\mu}\omega_k\wedge\omega_3\\
&+\frac{(B^1_{11,3})^2-(C^1_1)^2}{\mu^2}\omega_1\wedge\omega_3,\\
\sum_{k<l}R_{1ikl}\omega_k\wedge\omega_l
&=\frac{-(C^1_1)^2}{\mu^2}\omega_1\wedge\omega_i+\frac{C^1_1B^1_{11,3}}{\mu^2}\omega_3\wedge\omega_i\\
&+\sum_k\frac{C^1_{2,k}}{\mu}\omega_k\wedge\omega_i
-\frac{B^1_{11,3}}{\mu}\omega_2\wedge\omega_{3i}, ~~~i\geq 4,\\
\end{split}
\end{equation}
\begin{equation}\label{curve12}
\begin{split}
\sum_{k<l}R_{23kl}\omega_k\wedge\omega_l
&=\frac{-dB^1_{11,3}}{\mu}\wedge\omega_1+\sum_k\frac{C^1_{1,k}}{\mu}\omega_k\wedge\omega_3\\
&+\frac{(B^1_{11,3})^2-(C^1_1)^2}{\mu^2}\omega_2\wedge\omega_3
-\frac{2C^1_1B^1_{11,3}}{\mu^2}\omega_1\wedge\omega_2,\\
\sum_{k<l}R_{2ikl}\omega_k\wedge\omega_l
&=\frac{-(C^1_1)^2}{\mu^2}\omega_2\wedge\omega_i+\sum_k\frac{C^1_{1,k}}{\mu}\omega_k\wedge\omega_i\\
&+\frac{B^1_{11,3}}{\mu}\omega_1\wedge\omega_{3i},~~~~ i\geq 4.
\end{split}
\end{equation}
Using (\ref{equa4}), from (\ref{curve1}) and (\ref{curve12}), we obtain that the coefficients of the Blaschke tensor
satisfy
\begin{equation}\label{mcof1}
\begin{split}
A_{11}+A_{33}=\frac{C^1_{2,1}}{\mu}+\frac{(B^1_{11,3})^2
-(C^1_1)^2}{\mu^2};
~~A_{11}+A_{ii}=\frac{C^1_{2,1}}{\mu}-\frac{(C^1_1)^2}{\mu^2}, i\geq 4;\\
A_{22}+A_{33}=\frac{C^1_{1,2}}{\mu}+\frac{(B^1_{11,3})^2
-(C^1_1)^2}{\mu^2};
~~A_{22}+A_{ii}=\frac{C^1_{1,2}}{\mu}-\frac{(C^1_1)^2}{\mu^2}, i\geq 4.
\end{split}
\end{equation}
\begin{equation}\label{mcof2}
\begin{split}
&A_{23}=\frac{C^1_{1,3}}{\mu}=\frac{E_1(B^1_{11,3})}{\mu},\\
&A_{13}=\frac{2C^1_1B^1_{11,3}}{\mu^2}-\frac{E_2(B^1_{11,3})}{\mu}=\frac{C^1_{2,3}}{\mu}+\frac{C^1_1B^1_{11,3}}{\mu^2},\\
&A_{1i}=\frac{C^1_{2,i}}{\mu}=-\frac{B^1_{11,3}}{\mu}
\omega_{3i}(E_2), ~~
A_{2i}=\frac{C^1_{1,i}}{\mu}=\frac{B^1_{11,3}}{\mu}
\omega_{3i}(E_1), i\geq 4,
\end{split}
\end{equation}
\begin{equation}\label{mcof3}
\begin{split}
&A_{ij}=0, ~~~i,j\geq 4,~~ i\neq j,\\
&A_{12}=\frac{C^1_{2,2}}{\mu}-\frac{E_3(B^1_{11,3})}{\mu}=\frac{C^1_{1,1}}{\mu}+\frac{E_3(B^1_{11,3})}{\mu}.
\end{split}
\end{equation}
\begin{equation}\label{mcof4}
\begin{split}
&E_i(B^1_{11,3})=0, ~~~E_3(B^1_{11,3})=B^1_{11,3}\omega_{3i}(E_i),~~ i\geq 4,\\
&\omega_{3i}(E_3)=0, i\geq 4, ~~~\omega_{3i}(E_j)=0,~~ i,j\geq 4, i\neq j.
\end{split}
\end{equation}
From (\ref{mcof1}), we obtain the coefficients of the Blaschke tensor satisfy
$$A_{ij}=A_4\delta_{ij},~~i,j\geq 4,~~A_4\triangleq A_{44}.$$

\section{Wintgen ideal submanifolds constructed by cones}

\begin{Definition}
Let $u:M^r\longrightarrow \mathbb{S}^{r+p}\subset \mathbb{R}^{r+p+1}$ be an immersed submanifold.
We define \emph{the cone over $u$} in $\mathbb{R}^{m+p}$ as
\begin{equation*}
\begin{split}
&f:R^+\times \mathbb{R}^{m-r-1}\times M^r\longrightarrow \mathbb{R}^{m+p},\\
&~~~~~~f(t,y,u)=(y,tu),
\end{split}
\end{equation*}
\end{Definition}

\begin{PROPOSITION}\label{32}
Let $u: M^r\longrightarrow \mathbb{S}^{r+p}$ be an immersed submanifold. Then the cone
$f=(y,tu):R^+\times \mathbb{R}^{m-r-1}\times M^r\longrightarrow \mathbb{R}^{m+p}$ is a Wintgen ideal submanifold if and only if $u$ is a minimal Wintgen ideal submanifold in $\mathbb{S}^{r+p}$.
\end{PROPOSITION}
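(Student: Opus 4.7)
The plan is to compute the shape operators of the cone $f$ directly in a natural orthonormal frame adapted to the product structure of the domain, and then compare the result with the structural characterization \eqref{form1} of the Wintgen ideal condition.

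First I would set up adapted frames. Choosing a local orthonormal tangent frame $\{v_1,\ldots,v_r\}$ and orthonormal normal frame $\{\tilde n_1,\ldots,\tilde n_p\}$ of $u$ in $S^{r+p}$, the pushforwards $\partial_tf=(0,u)$, $\partial_{y^j}f=(e_j,0)$, $f_*\partial_{x^i}=(0,tv_i)$ produce (after dividing the last group by $t$) an orthonormal tangent frame of $f$ in the induced metric $dt^2+|dy|^2+t^2g^u$, while $\xi_\alpha:=(0,\tilde n_\alpha)$ is an orthonormal normal frame of $f$ in $\mathbb{R}^{m+p}$. A direct calculation using the decomposition $\nabla^{\mathbb{R}^{r+p+1}}_{v_i}v_k=-\delta_{ik}u+\nabla^{M^r}_{v_i}v_k+II^u(v_i,v_k)$ (where $II^u$ is the second fundamental form of $u$ in $S^{r+p}$) shows that the second fundamental form of $f$ vanishes on all pairings involving $\partial_t$ or $\partial_{y^j}$, while its $(\partial_{x^i},\partial_{x^k})$-component in direction $\xi_\alpha$ equals $t\,h^u_{ik,\alpha}$. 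Normalizing to the orthonormal frame yields the block decomposition
\begin{equation*}
A^f_{\xi_\alpha}=\begin{pmatrix}0_{(m-r)\times(m-r)} & 0\\ 0 & \tfrac{1}{t}A^u_{\tilde n_\alpha}\end{pmatrix},
\end{equation*}
so the flat distribution $\mathbb{F}:=\mathrm{span}\{\partial_t,\partial_{y^1},\ldots,\partial_{y^{m-r-1}}\}$ lies in $\ker A^f_{\xi_\alpha}$ for every $\alpha$.

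The ``if'' direction is then immediate. If $u$ is minimal and Wintgen ideal, its shape operators $A^u_{\tilde n_\alpha}$ take the $r\times r$ form \eqref{form1} with common diagonals $\lambda_1=\lambda_2=\lambda_3=0$ (the $\lambda_\alpha$ being the mean curvatures of $u$ in the various normal directions). Substituting into the block formula and permuting the tangent frame so that the two distinguished $M^r$-directions come first, $A^f_{\xi_\alpha}$ acquires exactly the form \eqref{form1} with $\lambda_1=\lambda_2=\lambda_3=0$ and $\mu_0=\mu^u/t$, proving that $f$ is Wintgen ideal.

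For the ``only if'' direction, suppose $f$ is Wintgen ideal and write its shape operators in the form \eqref{form1} relative to some frame $\{\tilde E_i\},\{\tilde\xi_\alpha\}$. Since $\mathbb{F}$ lies in the simultaneous kernel $K:=\bigcap_\alpha\ker A^f_{\tilde\xi_\alpha}$ and $\dim\mathbb{F}\ge 1$, the key step is to analyze $K$ using \eqref{form1}: the relation $A_{\tilde\xi_3}=\lambda_3 I$ forces $\lambda_3=0$, and a short case-by-case inspection (splitting on whether each of $\lambda_1,\lambda_2$ equals $0$ or $\pm\mu_0$) shows that $K$ has positive dimension only when $\lambda_1=\lambda_2=0$, in which case $K=\mathrm{span}\{\tilde E_3,\ldots,\tilde E_m\}$. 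Hence $\tilde E_1,\tilde E_2\in\mathbb{F}^\perp$, and restricting the block decomposition above to $\mathbb{F}^\perp$ gives $A^u_{\tilde n_\alpha}=t\,A^f_{\tilde\xi_\alpha}|_{\mathbb{F}^\perp}$ in the form \eqref{form1} with all $\lambda_\alpha=0$, so $u$ is both minimal and Wintgen ideal in $S^{r+p}$. The principal difficulty is exactly this kernel analysis: one must carefully rule out spurious $1$-dimensional simultaneous kernels in the degenerate subcases $\lambda_\alpha=\pm\mu_0$, and be mindful of the borderline $m=r+1$ where only a $1$-dimensional common kernel is needed.
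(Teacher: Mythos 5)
Your proposal is correct and follows essentially the same route as the paper: the paper's entire proof is the computation $I=t^2I_u+I_{\mathbb{R}^{m-r}}$, $II=t\,II_u$ (equation (4.1)), from which it asserts "the conclusion follows easily." Your block formula for the shape operators is exactly this computation, and your kernel analysis (forcing $\lambda_1=\lambda_2=\lambda_3=0$ from the nontrivial common kernel containing the flat directions) is a correct and welcome spelling-out of the details the paper omits.
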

\begin{proof}
The first and second
fundamental forms of $f$ are, respectively,
\begin{equation}\label{hi1}
I=t^2I_u+I_{\mathbb{R}^{m-r}}, \;\; II=t~II_u,
\end{equation}
where $I_u,II_u$ are the first and second fundamental forms of $u$, respectively, and $I_{\mathbb{R}^{m-r}}$ denotes the standard metric of $\mathbb{R}^{m-r}$.
The conclusion follows easily.
\end{proof}

The M\"{o}bius position vector $Y:R^+\times \mathbb{R}^{m-r-1}\times M^r\longrightarrow \mathbb{R}^{m+p+2}_1$ of the cone $f$ is
\begin{equation*}
Y=\rho_0\left(\frac{1+t^2+|y|^2}{2t},\frac{1-t^2-|y|^2}{2t},y,u\right),
\end{equation*}
where $\rho_0^2=\frac{m}{m-1}(|II_u|^2-mH_u^2): M^r\longrightarrow \mathbb{R}$, and $y:\mathbb{R}^{m-r-1}\longrightarrow \mathbb{R}^{m-r-1}$ is the identity map.
Let $$\mathbb{H}^{m-r}=\{(y_0,y)\in \mathbb{R}^{m-r+1}:~-y_0^2+|y|^2=-1, y_0\geq 1\}\cong R^+\times \mathbb{R}^{m-r-1},$$
then $(\frac{1+t^2+|y|^2}{2t},\frac{1-t^2-|y|^2}{2t},y):R^+\times \mathbb{R}^{m-r-1}\cong\mathbb{H}^{m-r}\to \mathbb{H}^{m-r}$ is nothing else but the identity map. And the M\"{o}bius position vector of the cone $f$ is
\begin{equation}\label{mop2}
Y=\rho_0(id,u):\mathbb{H}^{m-r}\times M^r\to \mathbb{H}^{m-r}\times \mathbb{S}^{r+p}\subset \mathbb{R}^{m+p+2}_1,
\end{equation}
where $\rho_0\in C^{\infty}(M^r)$ and $id:\mathbb{H}^{m-r}\to \mathbb{H}^{m-r}$ is a identity map.

The m\"{o}bius metric of the cone $f$ is
$$g=\rho_0^2(I_u+I_{\mathbb{H}^{m-r}}),$$
where $I_{\mathbb{H}^{m-r}}$ is the standard hyperbolic metric of $\mathbb{H}^{m-r}$.

From (\ref{mop2}) we have the following result.
\begin{PROPOSITION}\label{redu}
Let $f:M^m\to \mathbb{R}^{m+p}$ be
an immersed submanifold without umbilical points. If there exists a submanifold $u:M^r\to \mathbb{S}^{r+p}$ such that the M\"{o}bius position vector of $f$ is
\begin{equation*}
Y=\rho_0(id,u):\mathbb{H}^{m-r}\times M^r\to \mathbb{H}^{m-r}\times \mathbb{S}^{r+p}\subset \mathbb{R}^{m+p+2}_1,
\end{equation*}
where $\rho_0\in C^{\infty}(M^r)$ and $id:\mathbb{H}^{m-r}\to \mathbb{H}^{m-r}$ is the identity map.
Then $f$ is a cone over $u$.
\end{PROPOSITION}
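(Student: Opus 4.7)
The plan is to invert the construction of the M\"{o}bius position vector explicitly. Recall that the canonical lift
\[
Y=\rho\Bigl(\tfrac{1+|f|^2}{2},\,\tfrac{1-|f|^2}{2},\,f\Bigr)
\]
immediately yields $\rho=Y_0+Y_1$ and $f=(Y_2,\ldots,Y_{m+p+1})/(Y_0+Y_1)$ as soon as $Y_0+Y_1\neq 0$. The strategy is to apply these formulas to the hypothesized factorization $Y=\rho_0(\mathrm{id},u)$ and check that the resulting $f$ is exactly the cone of Example~\ref{ex4}.

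First I will reparametrize the hyperboloid. Writing points of $\mathbb{H}^{m-r}\subset\mathbb{R}^{m-r+1}_1$ as $(w_0,w_1,\tilde w)$ with $\tilde w\in\mathbb{R}^{m-r-1}$, $-w_0^2+w_1^2+|\tilde w|^2=-1$ and $w_0\geq 1$, the identity $w_1^2+|\tilde w|^2=w_0^2-1<w_0^2$ forces $|w_1|<w_0$, hence $w_0+w_1>0$ throughout. Setting
\[
t:=\frac{1}{w_0+w_1}>0,\qquad y:=t\tilde w\in\mathbb{R}^{m-r-1},
\]
a direct inversion yields $w_0=(1+t^2+|y|^2)/(2t)$, $w_1=(1-t^2-|y|^2)/(2t)$ and $\tilde w=y/t$, which shows that $(t,y)\leftrightarrow(w_0,w_1,\tilde w)$ is a diffeomorphism $\mathbb{R}^+\times\mathbb{R}^{m-r-1}\xrightarrow{\sim}\mathbb{H}^{m-r}$.

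Second, I will feed the assumed form of $Y$ into the recovery formulas. Its first two components are $\rho_0 w_0$ and $\rho_0 w_1$, so $Y_0+Y_1=\rho_0(w_0+w_1)=\rho_0/t$; the remaining components split as $\rho_0\tilde w\in\mathbb{R}^{m-r-1}$ and $\rho_0 u\in\mathbb{R}^{r+p+1}$. Consequently
\[
f=\frac{(\rho_0\tilde w,\,\rho_0 u)}{Y_0+Y_1}=(t\tilde w,\,tu)=(y,\,tu),
\]
which is precisely the cone $f(t,y,u)=(y,tu)$ of Example~\ref{ex4}, with parameters $(t,y,u)\in\mathbb{R}^+\times\mathbb{R}^{m-r-1}\times M^r$.

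The argument is essentially algebraic; the only delicate point is the positivity $w_0+w_1>0$, which is needed both to define $t$ and to guarantee $Y_0+Y_1\neq 0$ when solving for $f$. This follows from the convention $w_0\geq 1$ built into the definition of $\mathbb{H}^{m-r}$, so no genuine obstacle arises and the rest reduces to matching components.
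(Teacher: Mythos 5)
Your proof is correct and is essentially the paper's own argument: the paper states this proposition as an immediate consequence of its computation (\ref{mop2}) of the M\"obius position vector of a cone, and your explicit inversion --- recovering $f=(Y_2,\ldots,Y_{m+p+1})/(Y_0+Y_1)$ from the canonical lift and matching it with the cone parametrization via the diffeomorphism $\mathbb{R}^+\times\mathbb{R}^{m-r-1}\cong\mathbb{H}^{m-r}$ --- is exactly the reversal of that computation, carried out in detail. The only addition is your verification that $w_0+w_1>0$ on $\mathbb{H}^{m-r}$, a point the paper leaves unstated but which is indeed what makes the recovery of $(t,y)$ legitimate.
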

By computation, and combining with (\ref{2.23}), (\ref{hi1}), we have the following result.
\begin{PROPOSITION}\label{redu1}
Let the cone
$f=(y,tu):R^+\times \mathbb{R}^{m-r-1}\times M^r\longrightarrow \mathbb{R}^{m+p}$ be a Wintgen ideal submanifold. Then
the M\"{o}bius form $\Phi$ of $f$ vanishes if and only if the M\"{o}bius form $\Phi_u$ of $u:M^r\to S^{r+p}$ vanishes.
\end{PROPOSITION}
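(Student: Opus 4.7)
The plan is to apply \eqref{2.23} directly, after collecting what we know about the geometry of the cone $f$. By Proposition \ref{32}, the hypothesis that $f$ is Wintgen ideal forces $u:M^r\to S^{r+p}$ to be minimal (and Wintgen ideal), and from $II_f=t\,II_u$ in \eqref{hi1} one sees immediately that $H^{\alpha}_f=\tfrac{r}{mt}H^{\alpha}_u=0$. Consequently \eqref{2.23} reduces, both for $f$ and for $u$, to
\begin{equation*}
C^{\alpha}_i=-\rho^{-2}\sum_j h^{\alpha}_{ij}\,e_j(\ln\rho),
\end{equation*}
so we only need to compare the right-hand sides.

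Next I would fix matching orthonormal frames. In the image the tangent bundle of $f$ splits as $T\mathbb{R}^{m-r-1}\oplus\mathrm{span}(\partial_t)\oplus df(TM^r)$, and a unit frame on the last summand is $E^f_a=(0,X_a)$ where $\{X_a\}$ is a unit frame on $u$. The normal bundle of $f$ is spanned by $(0,\nu)$ for $\nu\in T^{\perp}u\subset TS^{r+p}$, so the normal indices match. A direct computation with \eqref{hi1} gives $h^{\alpha}_{f,\,ab}=\tfrac{1}{t}h^{\alpha}_{u,\,ab}$, while all other components of $h^{\alpha}_f$ (those involving $\partial_t$ or $\partial_{y^k}$) vanish. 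As for $\rho$, from $|II_f|^2=\tfrac{1}{t^2}|II_u|^2$ and minimality one gets
\begin{equation*}
\rho_f^2=\frac{m(r-1)}{(m-1)r}\cdot\frac{\rho_u^2}{t^2},
\end{equation*}
so $\ln\rho_f$ is independent of $y$, depends on $t$ only through a $-\ln t$ term, and $E^f_a(\ln\rho_f)=\tfrac{1}{t}\,e^u_a(\ln\rho_u)$.

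Plugging these into the simplified \eqref{2.23} and using $h^{\alpha}_{f,\,tj}=h^{\alpha}_{f,\,y^kj}=0$, we obtain $C^{\alpha}_{f,\,t}=C^{\alpha}_{f,\,y^k}=0$, while for the $M^r$-directions
\begin{equation*}
C^{\alpha}_{f,\,a}=-\rho_f^{-2}\cdot\frac{1}{t^2}\sum_b h^{\alpha}_{u,\,ab}\,e^u_b(\ln\rho_u)=\frac{\rho_u^2}{t^2\rho_f^2}\,C^{\alpha}_{u,\,a}=\frac{(m-1)r}{m(r-1)}\,C^{\alpha}_{u,\,a}.
\end{equation*}
Since the proportionality constant is non-zero and the normal bundles of $f$ and $u$ are canonically identified, $\Phi=0$ on $f$ if and only if every $C^{\alpha}_{u,\,a}$ vanishes, i.e.\ $\Phi_u=0$ on $u$. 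The only real obstacle is bookkeeping: carefully tracking the $1/t$ scaling between unit frames on $M^r$ in the two induced metrics; conceptually the statement is immediate once minimality eliminates the $H^{\alpha}_{,i}$ term in \eqref{2.23} and forces the ruling (flat) directions of the cone to contribute nothing.
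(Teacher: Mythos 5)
Your computation is correct and is exactly the argument the paper intends: the paper's own ``proof'' is the one-line remark that the result follows ``by computation, combining (\ref{2.23}) and (\ref{hi1})'', and your proposal simply carries out that computation (minimality of $u$ from Proposition~\ref{32} kills the $H^{\alpha}_{,i}$ term, the flat cone directions contribute nothing, and $C^{\alpha}_{f,a}$ is a nonzero constant multiple of $C^{\alpha}_{u,a}$). No discrepancy in approach; your version just makes the frame scaling and the factor $\rho_u^2/(t^2\rho_f^2)$ explicit.
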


\section{The vanishing of the M\"{o}bius form}

A commnon feature of all the examples of M\"obius homogeneous, Wintgen ideal submanifolds described in the introduction is that they all have vanishing M\"obius form, i.e., $\Phi=0$.
For Examples~\ref{ex1} and \ref{ex2}, this follows from Proposition~\ref{redu1} and the results in \cite{li}. For Example~\ref{ex3} in $S^5$, this has been verified in \cite{XLMW}.

Conversely, in this section we show that any M\"obius homogeneous, Wintgen ideal submanifold must have this property. This will be shown by contradiction.

Note that starting from this section, the assumption of M\"obius homogeneity will be used, whose basic consequence is that any M\"obius invariant geometric quantity as a function well-defined at every point of the underlying manifold must be a constant.

\begin{lemma}\label{le3}
Let $f: M^m\to \mathbb{R}^{m+p}, (m\geq 3)$ be a M\"{o}bius homogeneous Wintgen ideal submanifold. If $\Phi\neq 0$, then  $B^1_{11,3}=0$.
\end{lemma}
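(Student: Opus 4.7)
The plan is to use M\"obius homogeneity to promote $C^1_1$ and $B^1_{11,3}$ to constants, and then extract the forced relation $C^1_1 B^1_{11,3} = 0$ by comparing the $\omega_2\wedge\omega_3$ component in the exterior derivative of the identity in (\ref{conne0}).

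First, after the canonical frame choices made in Lemma~\ref{le1} and Lemma~\ref{le2}, both $(C^1_1)^2$ (which equals $\tfrac{1}{2}|\Phi|_g^2$, since $C^1_2=C^2_1=0$ and $C^2_2=-C^1_1$) and $B^1_{11,3}$ (the $g$-length of the intrinsically defined tangent vector $\sum_{a\ge 3} B^1_{11,a}E_a$ singled out by Lemma~\ref{le2}) are well-defined M\"obius scalar invariants of $M^m$. The homogeneity hypothesis therefore forces both to be constant on $M^m$, and $\Phi\neq 0$ translates to $C^1_1\neq 0$.

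The second step is to apply $d$ to the identity
\[
2\omega_{12}+\theta_{12}=\frac{C^1_1}{\mu}\omega_1-\frac{B^1_{11,3}}{\mu}\omega_3
\]
from (\ref{conne0}). Since the scalar coefficients are constants, this yields
\[
2\,d\omega_{12}+d\theta_{12}=\frac{C^1_1}{\mu}\,d\omega_1-\frac{B^1_{11,3}}{\mu}\,d\omega_3.
\]
The crucial observation is that $d\theta_{12}$ has no $\omega_2\wedge\omega_3$ component. Indeed, (\ref{bb1}) puts every $\theta_{1\gamma}$ and $\theta_{\gamma 2}$ ($\gamma\ge 3$) inside $\mathrm{span}(\omega_1,\omega_2)$, while (\ref{equa5}) combined with the fact that $B^\alpha_{ij}$ is supported on the $\{1,2\}$-block shows that $R^\perp_{12,kl}$ vanishes whenever $\{k,l\}\not\subset\{1,2\}$. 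Hence the normal-bundle structure equation expresses $d\theta_{12}$ as a pure multiple of $\omega_1\wedge\omega_2$.

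It then remains to extract the $\omega_2\wedge\omega_3$ coefficients in $d\omega_{12}$, $d\omega_1$, and $d\omega_3$, using (\ref{conne0}), the value $A_{13}=2 C^1_1 B^1_{11,3}/\mu^2$ (from (\ref{mcof2}) and the constancy of $B^1_{11,3}$), and the curvature component $R_{12,23}=-A_{13}$ from (\ref{equa4}). A direct expansion collapses the matching condition to
\[
\frac{4\,C^1_1\,B^1_{11,3}}{\mu^2}=0,
\]
and $C^1_1\neq 0$ forces $B^1_{11,3}=0$. The main bookkeeping obstacle is pinning down $\omega_{12}(E_3)$ inside $d\omega_{12}$; this is obtained by combining (\ref{conne2}) with (\ref{conne0}) after observing that the constancy of $C^1_1$ implies $C^1_{1,k}=0$ for all $k$, which in turn simplifies many components of the Blaschke tensor in (\ref{mcof2})--(\ref{mcof3}).
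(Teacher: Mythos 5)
Your first two steps are sound: homogeneity does make $C^1_1$ and $B^1_{11,3}$ constants, and your observation that $d\theta_{12}$ is a pure multiple of $\omega_1\wedge\omega_2$ (by \eqref{bb1} and \eqref{equa5}) is correct. The fatal problem is the final extraction: the $\omega_2\wedge\omega_3$ component of the differentiated identity does \emph{not} collapse to $4C^1_1B^1_{11,3}/\mu^2=0$; it collapses to a tautology. Indeed, writing $c=C^1_1/\mu$, $L=B^1_{11,3}/\mu$, the left side $2\,d\omega_{12}+d\theta_{12}$ has $\omega_2\wedge\omega_3$ coefficient
\begin{equation*}
2\Bigl[\,\underbrace{-cL}_{\text{from }\omega_{13}\wedge\omega_{32}}\;\underbrace{-\,R_{1223}}_{=A_{13}=2cL}\,\Bigr]=2cL ,
\end{equation*}
using \eqref{equa4} ($R_{1223}=-A_{13}$) and \eqref{abc}. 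On the right side, $\tfrac{C^1_1}{\mu}d\omega_1-\tfrac{B^1_{11,3}}{\mu}d\omega_3$ has $\omega_2\wedge\omega_3$ coefficient $c\bigl(-\omega_{12}(E_3)-L\bigr)-L(-c)=-c\,\omega_{12}(E_3)$; but \eqref{conne2} together with $C^1_{2,3}=C^1_1B^1_{11,3}/\mu$ from \eqref{abc} forces $\omega_{12}(E_3)=-L-C^1_{2,3}/C^1_1=-2L$, so the right side is also $2cL$. The two sides agree identically, and the same happens for every other component not proportional to $\omega_1\wedge\omega_2$ (I checked $\omega_1\wedge\omega_3$, $\omega_1\wedge\omega_i$, $\omega_2\wedge\omega_i$, $\omega_3\wedge\omega_i$: all reduce to $0=0$ or $2A_{1i}=2A_{1i}$). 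This is to be expected: the relations \eqref{conne2}, \eqref{mcof1}--\eqref{mcof4}, \eqref{abc} you feed into the comparison were themselves produced by the integrability conditions, so differentiating the first line of \eqref{conne0} cannot manufacture a contradiction from them; your claimed factor $4$ can only arise from a sign slip (e.g.\ taking $\omega_{12}(E_3)=+2L$ instead of $-2L$).

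The paper's proof escapes this circularity by differentiating a different, genuinely unused equation: from \eqref{mcof2} and \eqref{mcof4} one has $\omega_{3i}=-\frac{\mu A_{1i}}{B^1_{11,3}}\omega_2$ for $i\geq 4$, and applying the structure equation to $d\omega_{3i}$ and comparing the $\omega_1\wedge\omega_i$ coefficient against the Gauss equation value $R_{3i1i}=A_{13}=2C^1_1B^1_{11,3}/\mu^2$ yields
\begin{equation*}
\frac{(C^1_1)^2(B^1_{11,3})^2}{\mu^4}+(A_{1i})^2=0 ,
\end{equation*}
a sum of squares whose first term is strictly positive when $C^1_1\neq0$ and $B^1_{11,3}\neq0$ --- an honest contradiction. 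If you want to repair your argument you must bring in an equation of this kind (involving the frame directions $E_i$, $i\geq4$, or the $\omega_1\wedge\omega_2$ component together with control of the normal curvature terms, as the paper does later in Lemma~\ref{le4} and Proposition~\ref{c01}); the single identity you differentiate is simply not strong enough.
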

\begin{proof}
By assumption $\Phi\neq0$, i.e., $C^1_1\neq0$. By Lemma~\ref{le1}, the normal vector field $\xi_1$ and tangent vector fields $\{E_1,E_2\}$ are determined up to a sign.
Since $f$ is M\"{o}bius homogeneous, the function $C^1_1$ takes value as a constant.
Similarly, if $B^1_{11,3}\neq0$, then the function $B^1_{11,3}$ is well-defined function, hence a non-zero constant.

From
$dC^1_1=\sum_kC^1_{1,k}\omega_k$, we have
$$C^1_{1,k}=0, ~~1\leq k\leq m.$$
From (\ref{mcof2}), (\ref{mcof3}) and (\ref{mcof4}), we obtain
\begin{equation}\label{abc}
A_{21}=A_{23}=A_{24}=\cdots=A_{2m}=0,~~ A_{13}=\frac{2C^1_1B^1_{11,3}}{\mu^2},~~C^1_{2,3}=\frac{C^1_1B^1_{11,3}}{\mu}.
\end{equation}

If $B^1_{11,3}\neq0$, from (\ref{mcof2}) and (\ref{mcof4}), we have
$$\omega_{3i}=-\frac{\mu A_{1i}}{B^1_{11,3}}\omega_2, ~~i\geq 4.$$
Using $d\omega_{ij}-\sum_k\omega_{ik}\wedge\omega_{kj}
=-\frac{1}{2}\sum_{kl}R_{ijkl}\omega_k\wedge\omega_l$,
there follows
\begin{equation}\label{3iir}
\begin{split}
&\sum_{k<l}R_{3ikl}\omega_k\wedge\omega_l=d(\frac{\mu A_{1i}}{B^1_{11,3}})\wedge\omega_2-\omega_2\wedge(\sum_{k\geq4}\frac{\mu A_{1m}}{B^1_{11,3}}\omega_{mi})
+\frac{C^1_1B^1_{11,3}}{\mu^2}\omega_1\wedge\omega_i\\
&-\frac{\mu A_{1i}}{B^1_{11,3}}[\frac{C^1_{2,2}}{C^1_1}\omega_1\wedge\omega_{2}+\frac{C^1_{2,3}}{C^1_1}\omega_1\wedge\omega_3
+\omega_1\wedge(\sum_{k\geq4}\frac{C^1_{2,k}}{C^1_1}\omega_k)]
-\frac{(C^1_1)^2}{\mu^2}\omega_3\wedge\omega_i.
\end{split}
\end{equation}
Comparing $\omega_1\wedge\omega_i$ in (\ref{3iir}), there should be
$$R_{3i1i}=-\frac{\mu A_{1i}}{B^1_{11,3}}\frac{C^1_{2,i}}{C^1_1}+\frac{C^1_1B^1_{11,3}}{\mu^2}.$$
Since $A_{13}=\frac{2C^1_1B^1_{11,3}}{\mu^2}$ and $C^1_{2,i}=\mu A_{1i}$, we obtain
$$\frac{(C^1_1)^2(B^1_{11,3})^2}{\mu^4}+(A_{1i})^2=0,$$
which is a contradiction. So $B^1_{11,3}=0$.
\end{proof}
\begin{lemma}\label{le30}
Let $f: M^m\to \mathbb{R}^{m+p}, (m\geq 3)$ be a M\"{o}bius homogeneous Wintgen ideal submanifold. If $\Phi\neq 0$, then  the coefficients of the Blaschke tensor
satisfy
\begin{equation*}
\begin{split}
&A_{22}=A_{33}=\cdots=A_{mm}=-\frac{(C^1_1)^2}{2\mu^2},\\
&(A_{11}-A_{22})^2=\frac{(C^1_1)^2}{\mu^2}(A_{11}-A_{22})
+2(C^1_1)^2,
\end{split}
\end{equation*}
and curvature tensor satisfies
$$4\mu^2=R_{1212}+\sum_{\alpha\geq 3}\frac{(B^{\alpha}_{11,2})^2+(B^{\alpha}_{22,1})^2}{\mu^2}~.$$
\end{lemma}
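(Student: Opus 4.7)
The plan is to exploit the structural simplifications of Lemma~\ref{le3} together with M\"obius homogeneity (which forces every M\"obius-invariant scalar to be constant, so in particular $dC^{1}_{1}=0$). Under these hypotheses, the formulas (\ref{mcof1})--(\ref{mcof4}) force the Blaschke tensor to be diagonal with $A_{33}=\cdots=A_{mm}\equiv A_{4}$; the relation $A_{22}+A_{4}=-\tfrac{(C^{1}_{1})^{2}}{\mu^{2}}$ follows from (\ref{mcof1}) together with $C^{1}_{1,2}=0$; and the tangent connection~(\ref{conne2}) collapses to $\omega_{12}=k\omega_{1}$ with $k=\tfrac{C^{1}_{1}}{\mu}-\tfrac{\mu D}{C^{1}_{1}}$, where $D:=A_{11}-A_{22}$.

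For the equality $A_{22}=A_{4}$, I would compute $A_{23,l}$ directly from its definition: since $A$ is diagonal, $\sum_{l}A_{23,l}\omega_{l}=A_{22}\omega_{23}+A_{4}\omega_{32}=\tfrac{C^{1}_{1}}{\mu}(A_{4}-A_{22})\omega_{3}$. Codazzi~(\ref{equa1}) with $(i,j,k)=(3,2,3)$ has vanishing right-hand side (every relevant $B^{\alpha}_{3\cdot}$ and $C^{\alpha}_{3}$ is zero) and left-hand side $A_{32,3}-A_{33,2}=\tfrac{C^{1}_{1}}{\mu}(A_{4}-A_{22})$, forcing $A_{22}=A_{4}=-\tfrac{(C^{1}_{1})^{2}}{2\mu^{2}}$. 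For the quadratic identity, the same machinery applied to $A_{12}$ gives $A_{12,1}=Dk$ (other components zero), and Codazzi~(\ref{equa1}) with $(i,j,k)=(1,1,2)$ computes its right-hand side as $2\mu C^{1}_{1}$, yielding the master relation $Dk=-2\mu C^{1}_{1}$. Substituting the formula for $k$ and clearing denominators produces $(A_{11}-A_{22})^{2}=\tfrac{(C^{1}_{1})^{2}}{\mu^{2}}(A_{11}-A_{22})+2(C^{1}_{1})^{2}$.

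The curvature identity is the interplay between tangential and normal curvature. From $\omega_{12}=k\omega_{1}$ one reads off $R_{1212}=-k^{2}$ via the tangent structure equation. On the normal side, the Ricci equation~(\ref{equa5}) applied to~(\ref{eq-equality}) gives $R^{\perp}_{1212}=-2\mu^{2}$ and $R^{\perp}_{12ij}=0$ otherwise. Using the explicit forms of $\theta_{1\alpha},\theta_{2\alpha}$ from~(\ref{bb1}) together with the Codazzi identities $B^{\alpha}_{12,1}=B^{\alpha}_{11,2}$ and $B^{\alpha}_{12,2}=-B^{\alpha}_{11,1}=B^{\alpha}_{22,1}$ for $\alpha\ge 3$, one computes $\sum_{\gamma\ge 3}\theta_{1\gamma}\wedge\theta_{\gamma 2}=-\sum_{\alpha\ge 3}\tfrac{(B^{\alpha}_{11,2})^{2}+(B^{\alpha}_{22,1})^{2}}{\mu^{2}}\omega_{1}\wedge\omega_{2}$. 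Writing $\theta_{12}=k'\omega_{1}$ with $k'=\tfrac{C^{1}_{1}}{\mu}-2k$ (from $2\omega_{12}+\theta_{12}=\tfrac{C^{1}_{1}}{\mu}\omega_{1}$), the normal structure equation produces $kk'+\sum_{\alpha\ge 3}\tfrac{(B^{\alpha}_{11,2})^{2}+(B^{\alpha}_{22,1})^{2}}{\mu^{2}}=2\mu^{2}$; feeding in the master relation (equivalent to $\tfrac{kC^{1}_{1}}{\mu}=k^{2}-2\mu^{2}$) reduces this to the asserted $4\mu^{2}=R_{1212}+\sum_{\alpha\ge 3}\tfrac{(B^{\alpha}_{11,2})^{2}+(B^{\alpha}_{22,1})^{2}}{\mu^{2}}$. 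The principal obstacle is keeping sign and index conventions consistent so that the two independent curvature computations meet cleanly.
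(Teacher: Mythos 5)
Your proposal is correct and follows essentially the same route as the paper: after invoking Lemma~\ref{le3} and homogeneity to get $B^1_{11,3}=0$ and $dC^1_1=0$, you use the Codazzi equation (\ref{equa1}) with exactly the index choices the authors use (comparing $A_{32,3}$ with $A_{33,2}$, and $A_{12,1}$ with $A_{11,2}$) to obtain the two Blaschke-tensor identities, and then the tangential and normal structure equations with $R^{\perp}_{1212}=-2\mu^2$ to obtain the curvature identity. Your only deviation is cosmetic — you make explicit that the final identity $4\mu^2=R_{1212}+\sum_{\alpha\ge3}\frac{(B^{\alpha}_{11,2})^2+(B^{\alpha}_{22,1})^2}{\mu^2}$ requires feeding back the relation $Dk=-2\mu C^1_1$ (the paper's (\ref{abc2})), a dependency the paper leaves implicit.
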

\begin{proof}
From (\ref{mcof3}), (\ref{mcof4}),(\ref{abc}) and Lemma~\ref{le3}, we obtain
\begin{equation}\label{abc1}
\begin{split}
&A_{12}=A_{13}=\cdots=A_{1m}=0, ~~C^1_{2,2}=C^1_{2,3}=\cdots=C^1_{2,m}=0,\\
&\omega_{12}=(\frac{C^1_1}{\mu}-\frac{C^1_{2,1}}{C^1_1})\omega_1,~~~\omega_{1i}=0,~~~\omega_{2i}=-\frac{C^1_1}{\mu}\omega_i, ~~i\geq3.
\end{split}
\end{equation}
Noticing that local functions $A_{ii} (1\leq i\leq m)$ are constant, from the definition of $A_{ij,k}$, (\ref{abc}), and (\ref{abc1}), we get
\begin{equation*}
\begin{split}
&A_{12,1}=(A_{11}-A_{22})(\frac{C^1_1}{\mu}-\frac{C^1_{2,1}}{C^1_1}),~~A_{11,2}=0,\\
&A_{23,3}=(A_{33}-A_{22})\frac{C^1_1}{\mu},~~~~A_{33,2}=0.
\end{split}
\end{equation*}
Using (\ref{equa1}) and (\ref{mcof1}), we have
\begin{equation}\label{abc2}
\begin{split}
&A_{22}=A_{33}=\cdots=A_{mm}=-\frac{(C^1_1)^2}{2\mu^2},\\
&(A_{11}-A_{22})^2=\frac{(C^1_1)^2}{\mu^2}(A_{11}-A_{22})+2(C^1_1)^2.
\end{split}
\end{equation}
Using $d\omega_{12}-\sum_k\omega_{1k}\wedge\omega_{k2}=-\frac{1}{2}\sum_{kl}R_{12kl}\omega_k\wedge\omega_l$ and (\ref{abc1}),
we have
\begin{equation}\label{r1212}
R_{1212}=-(\frac{C^1_1}{\mu}-\frac{C^1_{2,1}}{C^1_1})^2.
\end{equation}
From (\ref{conne2}) and (\ref{abc1}), we get
\begin{equation}\label{noru}
\theta_{12}=(\frac{2C^1_{2,1}}{C^1_1}-\frac{C^1_1}{\mu})\omega_1,~~~~d\omega_1=(\frac{C^1_1}{\mu}-\frac{C^1_{2,1}}{C^1_1})\omega_1\wedge\omega_2,~~~~d\omega_2=0.
\end{equation}
Using $d\theta_{12}-\sum_{\tau}\theta_{1\tau}\wedge\theta_{\tau2}=
-\frac{1}{2}\sum_{kl}R^{\bot}_{12 kl}\omega_k\wedge\omega_l$ and (\ref{bb1}),
we have
\begin{equation}\label{nor12}
4\mu^2=R_{1212}+\sum_{\alpha\geq 3}\frac{(B^{\alpha}_{11,2})^2+(B^{\alpha}_{22,1})^2}{\mu^2}.
\qedhere
\end{equation}
\end{proof}
\begin{lemma}\label{le4}
Let $f: M^m\to\mathbb{ R}^{m+p}, (m\geq 3)$ be a M\"{o}bius homogeneous Wintgen ideal submanifold. If  $\Phi\neq 0$ and $p\geq 3$, then
we can choose orthonormal frames in $Span\{\xi_3,\cdots,\xi_p\}$ such that the normal connection have the following form
\[
\left\{
\begin{array}{ll}
\left\{
\begin{array}{cc}
\theta_{13}=a_0\omega_2,\\
\theta_{23}=-a_0\omega_1,\\
\end{array}
\right.
\left\{
\begin{array}{cc}
\theta_{14}=a_0\omega_1,\\
\theta_{24}=a_0\omega_2,\\
\end{array}
\right.
\left\{
\begin{array}{cc}
\theta_{1\alpha}=0,~~\alpha\geq 5,\\
\theta_{2\alpha}=0,~~\alpha\geq 5,\\
\end{array}
\right.\\
\theta_{34}=(3\frac{C^1_{2,1}}{C^1_1}-2\frac{C^1_1}{\mu})\omega_1,~~~a_0\neq0.\\
\end{array}
\right.
\]
\[
\left\{
\begin{array}{ll}
\left\{
\begin{array}{cc}
\theta_{(2k+1)(2k+3)}=a_k\omega_2,\\
\theta_{(2k+2)(2k+3)}=-a_k\omega_1,\\
\end{array}
\right.
\left\{
\begin{array}{cc}
\theta_{(2k+1)(2k+4)}=a_k\omega_1,\\
\theta_{(2k+2)(2k+4)}=a_k\omega_2,\\
\end{array}
\right.
\left\{
\begin{array}{cc}
\theta_{(2k+1)\alpha}=0,~~\alpha\geq 2k+5,\\
\theta_{(2k+2)\alpha}=0,~~\alpha\geq 2k+5,\\
\end{array}
\right.\\
\theta_{(2k+3)(2k+4)}=[(k+3)\frac{C^1_{2,1}}{C^1_1}-(k+2)\frac{C^1_1}{\mu}]\omega_1,~~~a_k\neq0.
\end{array}
\right.
\]
$$
\left\{
\begin{array}{cc}
\theta_{(2k+3)\alpha}=0,~~\alpha\geq 2k+5,\\
\theta_{(2k+4)\alpha}=0,~~\alpha\geq 2k+5,\\
\end{array}
\right.
$$
\end{lemma}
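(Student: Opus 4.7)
The plan is to combine the integrability equations with M\"obius homogeneity (which forces any well-defined geometric invariant to be a constant) and the $O(p-2)$ gauge freedom in $\mathrm{Span}\{\xi_3,\ldots,\xi_p\}$. First, for $\alpha\ge 3$ we have $C^\alpha_i=0$ and $B^\alpha=0$, so equation (\ref{equa3}) forces complete symmetry of $B^\alpha_{ij,k}$ in all three indices. Combined with (\ref{bb1}), the identities $B^\alpha_{11,2}=B^\alpha_{12,1}$ and $B^\alpha_{22,1}=B^\alpha_{12,2}$ translate into
\[
\theta_{1\alpha}=p_\alpha\omega_1+q_\alpha\omega_2,\qquad \theta_{2\alpha}=-q_\alpha\omega_1+p_\alpha\omega_2,
\]
with $p_\alpha,q_\alpha$ constants. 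Thus the normal-connection data for $\xi_\alpha$ ($\alpha\ge 3$) reduces to a 2-vector $u_\alpha=(p_\alpha,q_\alpha)\in\mathbb{R}^2$.

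Next I would reduce the family $\{u_\alpha\}$ to a canonical form, using the $O(p-2)$-rotation of $\{\xi_3,\ldots,\xi_p\}$ together with the residual $U(1)$-freedom from Lemma~\ref{le1} (tangent rotation by $\theta$ with normal rotation $\varphi=-2\theta$, which multiplies $p_\alpha+iq_\alpha$ by $e^{-3i\theta}$). This is an SVD-type reduction bringing the matrix $U=(u_\alpha)_\alpha$ to $u_3=(0,a)$, $u_4=(b,0)$, $u_\alpha=0$ for $\alpha\ge 5$, with $a,b\ge 0$. It remains to show $a=b=:a_0>0$ and to determine $\theta_{34}$.

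For this I would exploit $R^\perp_{1\alpha}=R^\perp_{2\alpha}=0$ (valid since $B^\alpha=0$ for $\alpha\ge 3$), so $d\theta_{i\alpha}=\sum_\tau\theta_{i\tau}\wedge\theta_{\tau\alpha}$. Write $\theta_{34}=f\omega_1$ (the only component surviving once $\theta_{1\alpha},\theta_{2\alpha}$ vanish for $\alpha\ge 5$; this is verified by $d\theta_{13}$ and $d\theta_{24}$), and set $x:=C^1_1/\mu$, $y:=C^1_{2,1}/C^1_1$, so that $\theta_{12}=(2y-x)\omega_1$ and $d\omega_1=(x-y)\omega_1\wedge\omega_2$, $d\omega_2=0$ by (\ref{noru}). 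Expanding $d\theta_{14}$ and $d\theta_{23}$ in both ways yields
\[
af=b(3y-2x),\qquad bf=a(3y-2x).
\]
Assuming $3y-2x\ne 0$, eliminating $f$ gives $a^2=b^2$, hence $a=b=:a_0$ and $f=3y-2x$. The degenerate case $3y-2x=0$ is excluded by combining (\ref{nor12}) with the structure equation for $d\theta_{12}$ to obtain the identity $2\mu^2=y(y-x)$; substituting $3y=2x$ would then give $\mu^2=-y^2/4\le 0$, contradicting $\mu\ne 0$. This establishes the $k=0$ layer.

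The higher-level structure follows by induction: after fixing $(\xi_3,\xi_4)$ as above, the residual $O(p-4)$-freedom on $\{\xi_5,\ldots,\xi_p\}$ and the vanishing of $B^3,B^4$ supply the same symmetry on the next layer of connection forms $\theta_{3\alpha},\theta_{4\alpha}$, and an analogous SVD-plus-consistency step, with $d\theta_{34}$ now playing the role formerly played by $d\theta_{12}$, determines $a_1$ and $\theta_{56}=(4y-3x)\omega_1$. Iteration yields the full hierarchy $\theta_{(2k+3)(2k+4)}=[(k+3)y-(k+2)x]\omega_1$ with coupling constants $a_k$. The main technical obstacle is the base-case consistency computation: extracting the two dual expressions $af=b(3y-2x)$ and $bf=a(3y-2x)$ from the two structure equations, and correctly deriving the relation $2\mu^2=y(y-x)$ that rules out the degenerate case. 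After that, the inductive step is bookkeeping-heavy but structurally identical.
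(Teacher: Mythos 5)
Your proposal follows the paper's strategy in outline: rotational form of $\theta_{1\alpha},\theta_{2\alpha}$ from the full symmetry of $B^\alpha_{ij,k}$ and \eqref{bb1}, reduction of the normal frame to a two-dimensional block, consistency relations from $R^{\perp}_{1\alpha}=R^{\perp}_{2\alpha}=0$, exclusion of the degenerate ratio via the identity $2\mu^2=y(y-x)$ (which is exactly \eqref{abc2}, with your $x=C^1_1/\mu$, $y=C^1_{2,1}/C^1_1$), and induction. But there are two genuine gaps. The first is the ``SVD-type reduction'' to $u_3=(0,a)$, $u_4=(b,0)$, $u_\alpha=0$ $(\alpha\geq 5)$. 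The residual $U(1)$ you invoke from Lemma~\ref{le1} is not actually available: under the rotation by $(\theta,\varphi=-2\theta)$ one has $\tilde C^1_2=\cos\theta\,C^1_2+\sin\theta\,C^1_1=\sin\theta\,C^1_1$, which is nonzero whenever $\sin\theta\neq 0$ (recall $C^1_1\neq 0$ since $\Phi\neq 0$); any such rotation destroys the normalization $C^1_2=C^2_1=0$ on which \eqref{noru}, \eqref{r1212}, \eqref{nor12}, \eqref{abc2}, your own formulas $\theta_{12}=(2y-x)\omega_1$, $d\omega_1=(x-y)\omega_1\wedge\omega_2$, and indeed the statement of the lemma all rest. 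What remains is only the left $O(p-2)$ action on the rows $u_\alpha=(p_\alpha,q_\alpha)$, and this rotates the two columns $P=(p_\alpha)$, $Q=(q_\alpha)$ simultaneously, hence preserves $\sum_{\alpha\geq 3}p_\alpha q_\alpha$; since your target normal form has orthogonal columns, it is attainable by $O(p-2)$ alone only if $\sum_\alpha p_\alpha q_\alpha=0$, which is not known a priori. This is why the paper's frame choice only produces the weaker form $u_3=(p_3,q_3)$, $u_4=(p_4,0)$, $u_\alpha=0$ as in \eqref{nor13}, and the vanishing of $p_3=B^3_{11,2}/\mu$ is then a theorem, extracted from the integrability relations \eqref{nor132}--\eqref{norm1} together with the exclusion of $B^3_{22,1}=0$. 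Your gauge step silently assumes precisely this analytic content.

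The second gap: you never prove $a_0\neq 0$ (nor $a_k\neq 0$), although these non-vanishing claims are part of the statement and are exactly what drives the later contradiction in Proposition~\ref{c01}. Your two relations $af=b(3y-2x)$ and $bf=a(3y-2x)$ are satisfied by $a=b=0$ with $f$ arbitrary, so they yield $a^2=b^2$ but no positivity; note also that your derivation of $\theta_{34}=f\omega_1$ from $d\theta_{13}$ and $d\theta_{24}$ already presupposes $a\neq0$ or $b\neq0$. Ruling out $a=b=0$ at the first level needs the separate argument from \eqref{nor12} and \eqref{r1212}: if $\theta_{1\alpha}=\theta_{2\alpha}=0$ for all $\alpha\geq 3$, then $4\mu^2=R_{1212}=-\left(\frac{C^1_1}{\mu}-\frac{C^1_{2,1}}{C^1_1}\right)^2\leq 0$, which is absurd. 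At the next level the paper proves $a_1\neq 0$ by comparing the two evaluations of $d\theta_{34}$ in the degenerate case, obtaining $2a_0^2=\left(3\frac{C^1_{2,1}}{C^1_1}-2\frac{C^1_1}{\mu}\right)\left(\frac{C^1_1}{\mu}-\frac{C^1_{2,1}}{C^1_1}\right)$ and then showing this is $<-2\mu^2<0$ using \eqref{abc2} and $R_{1212}<0$ (equations \eqref{3434}--\eqref{a01}); an estimate of this kind is required at every stage of your induction, and ``$d\theta_{34}$ now playing the role formerly played by $d\theta_{12}$'' does not supply it. Both gaps are repairable, but only by restoring exactly the steps of the paper you compressed away: adopt the weaker $O(p-2)$ normal form, derive $B^3_{11,2}=0$ and $B^3_{22,1}=\pm B^4_{11,2}\neq 0$ from integrability, and prove the non-vanishing of each $a_k$ by the curvature comparison.
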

\begin{proof}
Since $p\geq 3$, from (\ref{bb1}), we can rechoose an orthonormal basis in $Span\{\xi_3,\cdots,\xi_p\}$ such that
\begin{equation}\label{nor13}
\left\{
\begin{array}{cc}
\theta_{13}=\frac{B^3_{11,2}}{\mu}\omega_1+\frac{B^3_{22,1}}{\mu}\omega_2,\\
\theta_{23}=-\frac{B^3_{22,1}}{\mu}\omega_1+\frac{B^3_{11,2}}{\mu}\omega_2,\\
\end{array}
\right.
\left\{
\begin{array}{cc}
\theta_{14}=\frac{B^4_{11,2}}{\mu}\omega_1,\\
\theta_{24}=\frac{B^4_{11,2}}{\mu}\omega_2,\\
\end{array}
\right.
\left\{
\begin{array}{cc}
\theta_{1\alpha}=0,~~\alpha\geq 5,\\
\theta_{2\alpha}=0,~~\alpha\geq 5,\\
\end{array}
\right.
\end{equation}
In fact, if $E=\sum_{\alpha\geq 3}B^{\alpha}_{22,1}\xi_{\alpha}\neq0$, let $\tilde{\xi}_3=\frac{E}{|E|}$. If $E=0$, let $\tilde{\xi}_3=\xi_3$.
If  $\tilde{E}=\sum_{\alpha\geq 4}B^{\alpha}_{11,2}\xi_{\alpha}\neq0$, let $\tilde{\xi}_4=\frac{\tilde{E}}{|\tilde{E}|}$. If $\tilde{E}=0$, let $\tilde{\xi}_4=\xi_4$. Therefore under the orthonormal
basis $\{\tilde{\xi}_3,\tilde{\xi}_4,\cdots,\tilde{\xi}_p\}$, we have (\ref{nor13}).

Since $f$ is M\"{o}bius homogeneous and the normal vector fields $\xi_3,\xi_4$ are determined up to a sign, then $B^3_{11,2}$, $B^3_{22,1}$ and $B^4_{11,2}$ are constants
(otherwise they are equal to zero). Noticing $R^{\bot}_{\alpha\beta kl}=0$ for $\alpha=1,2,\beta=3,4$, and using $d\theta_{\alpha\beta}-\sum_{\tau}\theta_{\alpha\tau}\wedge\theta_{\tau\beta}=
-\frac{1}{2}\sum_{kl}R^{\bot}_{\alpha\beta kl}\omega_k\wedge\omega_l$ and (\ref{nor13}), we obtain
\begin{equation}\label{nor132}
\begin{split}
&\frac{B^3_{11,2}}{\mu}(3\frac{C^1_{2,1}}{C^1_1}-2\frac{C^1_1}{\mu})\omega_1\wedge\omega_2=\frac{B^4_{11,2}}{\mu}\omega_1\wedge\theta_{34},\\
&\frac{B^3_{22,1}}{\mu}(3\frac{C^1_{2,1}}{C^1_1}-2\frac{C^1_1}{\mu})\omega_1\wedge\omega_2=-\frac{B^4_{11,2}}{\mu}\omega_2\wedge\theta_{34},\\
&(-\frac{B^3_{22,1}}{\mu}\omega_1+\frac{B^3_{11,2}}{\mu}\omega_2)\wedge\theta_{34}=0,\\
&\frac{B^4_{11,2}}{\mu}(3\frac{C^1_{2,1}}{C^1_1}-2\frac{C^1_1}{\mu})\omega_1\wedge\omega_2
=-(\frac{B^3_{11,2}}{\mu}\omega_1+\frac{B^3_{22,1}}{\mu}\omega_2)\wedge\theta_{34}.
\end{split}
\end{equation}
We eliminate the term $\theta_{34}$ in (\ref{nor132}) to obtain
\begin{equation}\label{norm1}
\frac{B^3_{22,1}}{\mu}\frac{B^3_{11,2}}{\mu}
\left(3\frac{C^1_{2,1}}{C^1_1}-2\frac{C^1_1}{\mu}\right)=0.
\end{equation}

If $2\frac{C^1_1}{\mu}=3\frac{C^1_{2,1}}{C^1_1}$, i.e., $\frac{C^1_{2,1}}{\mu}=\frac{2(C^1_1)^2}{3\mu^2}$,
noting $\frac{C^1_{2,1}}{\mu}=A_{11}-A_{22}$, then from (\ref{abc2}) we have
$$\frac{4}{9}\frac{(C^1_1)^4}{\mu^4}=\frac{2(C^1_1)^4}{3\mu^4}+2(C^1_1)^2,$$
which implies $-\frac{2}{9}\frac{(C^1_1)^4}{\mu^4}=2(C^1_1)^2$, and is a contradiction. So $2\frac{C^1_1}{\mu}-3\frac{C^1_{2,1}}{C^1_1}\neq0$.

If $B^3_{22,1}=0$, from (\ref{nor132}), we have
\begin{equation*}
\left\{
\begin{array}{cc}
\frac{B^3_{11,2}}{\mu}(3\frac{C^1_{2,1}}{C^1_1}-2\frac{C^1_1}{\mu})\omega_1\wedge\omega_2-\frac{B^4_{11,2}}{\mu}\omega_1\wedge\theta_{34}=0,\\
\frac{B^4_{11,2}}{\mu}(3\frac{C^1_{2,1}}{C^1_1}-2\frac{C^1_1}{\mu})\omega_1\wedge\omega_2+\frac{B^3_{11,2}}{\mu}\omega_1\wedge\theta_{34}=0,
\end{array}
\right.
\end{equation*}
which imply that $\frac{B^3_{11,2}}{\mu}=\frac{B^4_{11,2}}{\mu}=0$. Thus $\sum_{\alpha\geq 3}\frac{(B^{\alpha}_{11,2})^2+(B^{\alpha}_{22,1})^2}{\mu^2}=0$, and
equation (\ref{nor12}) implies $4\mu^2=R_{1212}$. Since $R_{1212}=-(\frac{C^1_1}{\mu}-\frac{C^1_{2,1}}{C^1_1})^2$, so this is a contradiction. Thus $B^3_{22,1}\neq0$.

Thus $B^3_{11,2}=0$. From (\ref{nor13}) and (\ref{nor132}) we have
\begin{equation*}
\left\{
\begin{array}{cc}
\frac{B^3_{22,1}}{\mu}(3\frac{C^1_{2,1}}{C^1_1}-2\frac{C^1_1}{\mu})\omega_1\wedge\omega_2+\frac{B^4_{11,2}}{\mu}\omega_2\wedge\theta_{34}=0,\\
\frac{B^4_{11,2}}{\mu}(3\frac{C^1_{2,1}}{C^1_1}-2\frac{C^1_1}{\mu})\omega_1\wedge\omega_2+\frac{B^3_{22,1}}{\mu}\omega_2\wedge\theta_{34}=0,
\end{array}
\right.
\end{equation*}
which imply $\frac{B^3_{22,1}}{\mu}=\pm\frac{B^4_{11,2}}{\mu}.$ We assume $a_0\triangleq\frac{B^3_{22,1}}{\mu}=\frac{B^4_{11,2}}{\mu}\neq 0,$ otherwise let $\tilde{\xi}_4=-\xi_4$.
From (\ref{nor13}) and (\ref{nor132}), we have
\begin{equation}\label{nor1301}
\left\{
\begin{array}{cc}
\theta_{13}=a_0\omega_2,\\
\theta_{23}=-a_0\omega_1,\\
\end{array}
\right.
\left\{
\begin{array}{cc}
\theta_{14}=a_0\omega_1,\\
\theta_{24}=a_0\omega_2,\\
\end{array}
\right.
\left\{
\begin{array}{cc}
\theta_{1\alpha}=0,~~\alpha\geq 5,\\
\theta_{2\alpha}=0,~~\alpha\geq 5,\\
\end{array}
\right.
\end{equation}
and
\begin{equation}\label{nor34}
\theta_{34}=\left(3\frac{C^1_{2,1}}{C^1_1}-2\frac{C^1_1}{\mu}\right)
\omega_1.
\end{equation}
Using $d\theta_{1\alpha}-\sum_{\tau}\theta_{1\tau}\wedge\theta_{\tau\alpha}=
-\frac{1}{2}\sum_{kl}R^{\bot}_{1\alpha kl}\omega_k\wedge\omega_l$ and (\ref{nor1301}), noting $R^{\bot}_{1\alpha kl}=0, \alpha\geq5,$
we have
\begin{equation*}
-\omega_1\wedge\theta_{3\alpha}+\omega_2\wedge\theta_{4\alpha}=0,~~~\omega_2\wedge\theta_{3\alpha}+\omega_1\wedge\theta_{4\alpha}=0,~~~\alpha\geq5.
\end{equation*}
Thus we can assume that
\begin{equation}\label{341}
\left\{
\begin{array}{cc}
\theta_{3\alpha}=a^{\alpha}_1\omega_1+a^{\alpha}_2\omega_2,\\
\theta_{4\alpha}=-a^{\alpha}_2\omega_1+a^{\alpha}_1\omega_2,\\
\end{array}
\right.
\alpha\geq5.
\end{equation}
We can choose a new orthonormal frame locally in $Span\{\xi_5,\cdots,\xi_p\}$ such that
\begin{equation}\label{nor35}
\left\{
\begin{array}{cc}
\theta_{35}=a\omega_1+b\omega_2,\\
\theta_{45}=-b\omega_1+a\omega_2,\\
\end{array}
\right.
\left\{
\begin{array}{cc}
\theta_{36}=c\omega_1,\\
\theta_{46}=c\omega_2,\\
\end{array}
\right.
\left\{
\begin{array}{cc}
\theta_{3\alpha}=0,~~\alpha\geq 7,\\
\theta_{4\alpha}=0,~~\alpha\geq 7,\\
\end{array}
\right.
\end{equation}

Using $d\theta_{35}=\sum_{\tau}\theta_{3\tau}\wedge\theta_{\tau5}$ and (\ref{nor34}), we have
\begin{equation}\label{nor56}
a\left(4\frac{C^1_{2,1}}{C^1_1}-3\frac{C^1_1}{\mu}\right)\omega_1\wedge\omega_2=c\omega_1\wedge\theta_{56}.
\end{equation}
Similarly, we have
\begin{equation}\label{nor3412}
\begin{split}
&b\left(4\frac{C^1_{2,1}}{C^1_1}-3\frac{C^1_1}{\mu}\right)\omega_1\wedge\omega_2=-c\omega_2\wedge\theta_{56},\\
&c\left(4\frac{C^1_{2,1}}{C^1_1}-3\frac{C^1_1}{\mu}\right)\omega_1\wedge\omega_2=-(a\omega_1+b\omega_2)\wedge\theta_{56},\\
&-b\omega_1\wedge\theta_{56}+a\omega_2\wedge\theta_{56}=0.
\end{split}
\end{equation}
Using the equations (\ref{nor56}) and (\ref{nor3412}), we can obtain
\begin{equation}\label{norabc}
ab\left(4\frac{C^1_{2,1}}{C^1_1}-3\frac{C^1_1}{\mu}\right)=0.
\end{equation}
If $4\frac{C^1_{2,1}}{C^1_1}-3\frac{C^1_1}{\mu}=0$, i.e., $\frac{C^1_{2,1}}{\mu}=\frac{3(C^1_1)^2}{4\mu^2}$,
noting $\frac{C^1_{2,1}}{\mu}=A_{11}-A_{22}$, then from (\ref{abc2}) we have
$$\frac{9}{16}\frac{(C^1_1)^4}{\mu^4}=\frac{3(C^1_1)^4}{4\mu^4}+2(C^1_1)^2,$$
which is a contradiction. So $4\frac{C^1_{2,1}}{C^1_1}-3\frac{C^1_1}{\mu}\neq0$.

If $b=0,$ from (\ref{nor56}) and (\ref{nor3412}), we have
\begin{equation}\label{nor34123}
\begin{split}
a\left(4\frac{C^1_{2,1}}{C^1_1}-3\frac{C^1_1}{\mu}\right)\omega_1\wedge\omega_2-c\omega_1\wedge\theta_{56}=0,\\
c\left(4\frac{C^1_{2,1}}{C^1_1}-3\frac{C^1_1}{\mu}\right)\omega_1\wedge\omega_2+a\omega_1\wedge\theta_{56}=0.
\end{split}
\end{equation}
Which implies $a=c=0$.
From (\ref{nor1301}) and (\ref{nor35}), we have
\begin{equation}\label{3412}
d\theta_{34}=\sum_{\tau}\theta_{3\tau}\wedge\theta_{\tau4}=\theta_{31}\wedge\theta_{14}+\theta_{32}\wedge\theta_{24}=
2a_0^2\omega_1\wedge\omega_2.
\end{equation}
On the other hand, from (\ref{nor34}), we have
$$d\theta_{34}=\left(3\frac{C^1_{2,1}}{C^1_1}-2\frac{C^1_1}{\mu}\right)
\left(\frac{C^1_1}{\mu}-\frac{C^1_{2,1}}{C^1_1}\right)\omega_1\wedge\omega_2.$$
Thus we have
\begin{equation}\label{3434}
\left(3\frac{C^1_{2,1}}{C^1_1}-2\frac{C^1_1}{\mu}\right)
\left(\frac{C^1_1}{\mu}-\frac{C^1_{2,1}}{C^1_1}\right)=
2a_0^2.
\end{equation}
Combining with $C^1_{2,1}=\mu(A_{11}-A_{22})$ and $A_{22}=-\frac{(C^1_1)^2}{2\mu^2}$, we have
\begin{equation}\label{a01}
2a_0^2=2A_{11}-\frac{(C^1_1)^2}{\mu^2}-6\mu^2.
\end{equation}
From (\ref{r1212}), we have
$$R_{1212}<0,~~i.e.,A_{11}<2\mu^2+\frac{C^1_1}{2\mu^2}.$$
Thus
$$2A_{11}-\frac{(C^1_1)^2}{\mu^2}-6\mu^2<-2\mu^2.$$
which is in contradiction with (\ref{a01}). Thus $b\neq0$.

Therefore $a=0$, From (\ref{nor3412}), we have
\begin{equation}\label{nor345}
\begin{split}
&b\left(4\frac{C^1_{2,1}}{C^1_1}-3\frac{C^1_1}{\mu}\right)\omega_1\wedge\omega_2=-c\omega_2\wedge\theta_{56},\\
&c\left(4\frac{C^1_{2,1}}{C^1_1}-3\frac{C^1_1}{\mu}\right)\omega_1\wedge\omega_2=-b\omega_2\wedge\theta_{56},\\
&\omega_1\wedge\theta_{56}=0,
\end{split}
\end{equation}
which implies $b=\pm c$, and we can assume that $b=c=a_1$. Thus
\begin{equation}\label{nor3451}
\left\{
\begin{array}{cc}
\theta_{35}=a_1\omega_2,\\
\theta_{45}=-a_1\omega_1,\\
\end{array}
\right.
\left\{
\begin{array}{cc}
\theta_{36}=a_1\omega_1,\\
\theta_{46}=a_1\omega_2,\\
\end{array}
\right.
\left\{
\begin{array}{cc}
\theta_{3\alpha}=0,~~\alpha\geq 7,\\
\theta_{4\alpha}=0,~~\alpha\geq 7,\\
\end{array}
\right.
\end{equation}
and
\begin{equation}\label{nor34511}
\theta_{56}=\left(4\frac{C^1_{2,1}}{C^1_1}-3\frac{C^1_1}{\mu}
\right)\omega_1.
\end{equation}
Repeating the process (\ref{nor34})--(\ref{nor34511}), we have
\[
\left\{
\begin{array}{cc}
\left\{
\begin{array}{cc}
\theta_{(2k+1)(2k+3)}=a_k\omega_2,\\
\theta_{(2k+2)(2k+3)}=-a_k\omega_1,\\
\end{array}
\right.
\left\{
\begin{array}{cc}
\theta_{(2k+1)(2k+4)}=a_k\omega_1,\\
\theta_{(2k+2)(2k+4)}=a_k\omega_2,\\
\end{array}
\right.
\left\{
\begin{array}{cc}
\theta_{(2k+1)\alpha}=0,~~\alpha\geq 2k+5,\\
\theta_{(2k+2)\alpha}=0,~~\alpha\geq 2k+5,\\
\end{array}
\right.\\
\theta_{(2k+3)(2k+4)}=[(k+3)\frac{C^1_{2,1}}{C^1_1}-(k+2)\frac{C^1_1}{\mu}]\omega_1, ~~a_k\neq0.
\end{array}
\right.
\]
\[
\left\{
\begin{array}{cc}
\theta_{(2k+3)\alpha}=0,~~\alpha\geq 2k+5,\\
\theta_{(2k+4)\alpha}=0,~~\alpha\geq 2k+5,\\
\end{array}
\right.
\]
Thus we finish the proof of Lemma\ref{le4}.
\end{proof}
\begin{Remark}
During the proof of Lemma\ref{le4}, we can assume that the codimension of $f$ is sufficiently large, otherwise
we consider $f: M^m\to \mathbb{R}^{m+p}\hookrightarrow \mathbb{R}^{m+p+k}$ as a submanifold in $\mathbb{R}^{m+p+k}$, which also is a M\"{o}bius homogeneous Wintgen ideal submanifold
in $\mathbb{R}^{m+p+k}$.
\end{Remark}

\begin{PROPOSITION}\label{c01}
Let $f: M^m\to\mathbb{ R}^{m+p}, (m\geq 3)$ be a M\"{o}bius homogeneous Wintgen ideal submanifold, then the M\"{o}bius form vanishes, i.e.,$\Phi=0$.
\end{PROPOSITION}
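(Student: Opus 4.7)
The plan is to proceed by contradiction, assuming $\Phi\neq 0$ (equivalently $C^1_1\neq 0$), so that all the previous lemmas in Section 5 apply. In particular, by the Remark following Lemma~\ref{le4}, we may assume $p$ is large enough that Lemma~\ref{le4} supplies an orthonormal normal frame with nonzero coefficients $a_0,a_1,\dots$ and the explicit connection forms displayed there. The idea is to exploit a single mixed normal connection form, namely $\theta_{34}$, by computing $d\theta_{34}$ in two different ways and obtaining incompatible values.

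First I would use the explicit expression
\[
\theta_{34}=\left(3\tfrac{C^{1}_{2,1}}{C^{1}_{1}}-2\tfrac{C^{1}_{1}}{\mu}\right)\omega_{1}
\]
from Lemma~\ref{le4}. Since $f$ is M\"obius homogeneous, the invariants $C^1_1/\mu$ and $C^1_{2,1}/C^1_1$ are constants. Combined with $d\omega_{1}=\bigl(\tfrac{C^{1}_{1}}{\mu}-\tfrac{C^{1}_{2,1}}{C^{1}_{1}}\bigr)\omega_{1}\wedge\omega_{2}$ from \eqref{noru}, differentiation gives
\[
d\theta_{34}=\left(3\tfrac{C^{1}_{2,1}}{C^{1}_{1}}-2\tfrac{C^{1}_{1}}{\mu}\right)\left(\tfrac{C^{1}_{1}}{\mu}-\tfrac{C^{1}_{2,1}}{C^{1}_{1}}\right)\omega_{1}\wedge\omega_{2},
\]
which, by relation \eqref{3434} already established in the proof of Lemma~\ref{le4}, equals $2a_{0}^{2}\,\omega_{1}\wedge\omega_{2}$.

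Next I would compute $d\theta_{34}$ via the normal structure equation. Because $B^{3}=B^{4}=0$ (only $\xi_{1},\xi_{2}$ support the M\"obius second fundamental form), the normal curvature $R^{\perp}_{34ij}$ vanishes by \eqref{equa5}, so $d\theta_{34}=\sum_{\tau}\theta_{3\tau}\wedge\theta_{\tau 4}$. Lemma~\ref{le4} pins down exactly which $\theta_{3\tau},\theta_{\tau 4}$ are nonzero: only $\tau=1,2$ (from the $\xi_{1},\xi_{2}$ side) and $\tau=5,6$ (from the next block on the $a_{1}$ side). A short calculation with the explicit expressions yields
\[
d\theta_{34}=(2a_{0}^{2}-2a_{1}^{2})\,\omega_{1}\wedge\omega_{2}.
\]
Comparing with the first computation forces $a_{1}=0$, contradicting the conclusion of Lemma~\ref{le4} that $a_{1}\neq 0$. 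Hence $\Phi=0$.

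The main obstacle is purely bookkeeping: one has to track the signs in $\theta_{\alpha\beta}=-\theta_{\beta\alpha}$ when assembling the four terms $\theta_{31}\wedge\theta_{14}$, $\theta_{32}\wedge\theta_{24}$, $\theta_{35}\wedge\theta_{54}$, $\theta_{36}\wedge\theta_{64}$, and confirm that the two pairs from $\tau=1,2$ contribute $+2a_{0}^{2}$ while the two pairs from $\tau=5,6$ contribute $-2a_{1}^{2}$. There is no deeper analytic obstruction; once Lemma~\ref{le4} is in hand, the contradiction drops out from a single structural identity for $d\theta_{34}$.
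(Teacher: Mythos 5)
Your overall device---computing $d\theta_{34}$ once from its explicit expression in Lemma~\ref{le4} and once from the flatness $R^{\perp}_{34ij}=0$---is the same one the paper uses, and your second computation $d\theta_{34}=(2a_0^2-2a_1^2)\,\omega_1\wedge\omega_2$ is correct (it even retains the next-block term that the paper's display omits; the paper's version is accurate only at the last block of the chain). The genuine gap is the appeal to \eqref{3434}. That relation is \emph{not} an established consequence of Lemma~\ref{le4}: in the paper it is derived only inside the sub-case ``$b=0$'' of the proof of Lemma~\ref{le4}, i.e.\ under the auxiliary hypothesis that the $\{\xi_5,\xi_6\}$-block decouples so that only $\tau=1,2$ contribute to $\sum_\tau\theta_{3\tau}\wedge\theta_{\tau4}$; that hypothesis is then refuted, and \eqref{3434} falls with it. In the situation where Lemma~\ref{le4}'s conclusion holds ($a_1\neq0$), the identity your own two computations produce is
\[
\Bigl(3\tfrac{C^1_{2,1}}{C^1_1}-2\tfrac{C^1_1}{\mu}\Bigr)\Bigl(\tfrac{C^1_1}{\mu}-\tfrac{C^1_{2,1}}{C^1_1}\Bigr)=2a_0^2-2a_1^2,
\]
so \eqref{3434} is exactly equivalent to the conclusion $a_1=0$ that you are trying to force: the argument is circular. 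With that step removed, your two computations of $d\theta_{34}$ agree identically and give no contradiction at all; they merely determine $a_1$ in terms of $a_0$ and the constant M\"obius invariants.

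What is genuinely needed---and what the paper uses---is sign information coming from the Blaschke tensor, not just the structure equations of the normal connection. Set $x=C^1_{2,1}/C^1_1$ and $y=C^1_1/\mu$. Relation \eqref{abc2}, together with $C^1_{2,1}=\mu(A_{11}-A_{22})$, gives $x(x-y)=2\mu^2$, while \eqref{r1212} gives $R_{1212}=-(x-y)^2<0$; hence for every $k$
\[
\bigl((k+3)x-(k+2)y\bigr)(y-x)=-(k+2)(x-y)^2-x(x-y)=(k+2)R_{1212}-2\mu^2<-2\mu^2<0.
\]
Feeding this into the corrected comparison yields $2a_{k+1}^2=2a_k^2+2\mu^2-(k+2)R_{1212}>2a_k^2+2\mu^2$, so every block of Lemma~\ref{le4} is nonzero and consumes two fresh normal directions; since the codimension is finite, the chain must terminate, and at the \emph{last} block the structure-equation side has no next-block term, so the displayed negative quantity must equal $2a_k^2\geq0$---that is the contradiction. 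This is how the paper's proof closes (its equation \eqref{nork1k}, read at the terminal block, with $p=2$ handled separately via \eqref{nor12}). Your closing claim that ``there is no deeper analytic obstruction'' is precisely where the proposal fails: the obstruction is the inequality from \eqref{abc2} and \eqref{r1212} combined with finiteness of the codimension.
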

\begin{proof}

If $p=2$, then equation (\ref{nor12}) implies
$4\mu^2=R_{1212}.$ This is a contradiction since $R_{1212}=-(\frac{C^1_1}{\mu}-\frac{C^1_{2,1}}{C^1_1})^2.$
Thus $\Phi=0$ and we finish the proof.

If $p\geq3$. Noticing that $R^{\bot}_{\alpha\beta kl}=0$ for $\alpha=2k+3, \beta=2k+4$, from Lemma\ref{le4}, we have
\begin{equation*}
d\theta_{(2k+3)(2k+4)}=\sum_{\tau}\theta_{(2k+3)\tau}\wedge\theta_{\tau(2k+4)}=2a_k^2\omega_1\wedge\omega_2,
\end{equation*}
and
\begin{equation*}
d\theta_{(2k+3)(2k+4)}=
\left[(k+3)\frac{C^1_{2,1}}{C^1_1}-(k+2)\frac{C^1_1}{\mu}\right]
\left[\frac{C^1_1}{\mu}-\frac{C^1_{2,1}}{C^1_1}\right]
\omega_1\wedge\omega_2.
\end{equation*}
Thus we have
\begin{equation}\label{nork1k}
\left[(k+3)\frac{C^1_{2,1}}{C^1_1}-(k+2)\frac{C^1_1}{\mu}\right]
\left[\frac{C^1_1}{\mu}-\frac{C^1_{2,1}}{C^1_1}\right]=2a_k^2.
\end{equation}
From (\ref{abc2}) we have
$$
\left[(k+3)\frac{C^1_{2,1}}{C^1_1}-(k+2)\frac{C^1_1}{\mu}\right]
\left[\frac{C^1_1}{\mu}-\frac{C^1_{2,1}}{C^1_1}\right]
=(k+2)A_{11}-2(k+3)\mu^2-(k+2)\frac{C^1_1}{2\mu^2}.$$
Since $R_{1212}<0,$ i.e., $A_{11}<2\mu^2+\frac{C^1_1}{2\mu^2}$, Thus
$$(k+2)A_{11}-2(k+3)\mu^2-(k+2)\frac{C^1_1}{2\mu^2}<-2\mu^2.$$
The (\ref{nork1k}) implies
Thus $2a_k^2<-2\mu^2$, which is a  contradiction. Thus $\Phi=0.$
\end{proof}

Since the Wintgen ideal submanifold $f$ is M\"{o}bius homogeneous, then under the orthonormal basis $\{E_1,\cdots,E_m\}$,
\begin{equation}\label{conne}
\begin{split}
&2\omega_{12}+\theta_{12}=-\frac{B^1_{11,3}}{\mu}\omega_3,\\
&\omega_{13}=-\frac{B^1_{11,3}}{\mu}\omega_2,~~~~\omega_{1i}=0, i\geq 4,\\
&\omega_{23}=\frac{B^1_{11,3}}{\mu}\omega_1, ~~~~\omega_{2i}=0, i\geq 4.
\end{split}
\end{equation}
From (\ref{mcof1}), we can obtain the following result,
\begin{PROPOSITION}\label{blasch}
Let $f:M^m\longrightarrow \mathbb{R}^{m+p}(m\geq3)$ be a M\"{o}bius homogeneous Wintgen ideal submanifold. Then we can choose the orthonormal
basis $\{E_1,\cdots,E_m\}$ such that
$$(A_{ij})=diag(A_1,A_1,-A_1,\cdots,-A_1),~~~if~~B^1_{11,3}=0,$$
$$(A_{ij})=diag(A_1,A_1,A_1,-A_1,\cdots,-A_1),~~~if~~B^1_{11,3}\neq0.$$
Particularly, if $B^1_{11,3}\neq0$, $\omega_{3i}=0, i\geq 4$ and
$A_1=\frac{(B^1_{11,3})^2}{2\mu^2}.$
\end{PROPOSITION}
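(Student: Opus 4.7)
The starting point is Proposition~\ref{c01}, which supplies $\Phi\equiv 0$, so every $C^\alpha_i$ and all its covariant derivatives vanish. Substituting into (\ref{mcof1}) collapses it to $A_{11}=A_{22}=:A_1$, $A_{ii}=-A_1$ for $i\ge 4$, and the single remaining identity
\[A_{11}+A_{33}=\frac{(B^1_{11,3})^2}{\mu^2}.\]
Since the canonical frame of Lemma~\ref{le1}--Lemma~\ref{le2} is determined up to signs, M\"obius homogeneity forces $B^1_{11,3}$ to be a constant, hence $E_k(B^1_{11,3})=0$ for every $k$. Equations (\ref{mcof2})--(\ref{mcof4}) then immediately give $A_{12}=A_{13}=A_{23}=0$ and $A_{ij}=0$ for $i,j\ge 4$, $i\ne j$. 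In the case $B^1_{11,3}=0$ the right-hand sides of (\ref{mcof2}) also yield $A_{1i}=A_{2i}=0$, and the displayed identity gives $A_{33}=-A_1$, producing the first claimed diagonal form.

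For the case $B^1_{11,3}\ne 0$, the connection forms simplify to (\ref{conne}). From (\ref{mcof4}) with $B^1_{11,3}$ constant we have $\omega_{3i}(E_j)=0$ for all $j\in\{3,\dots,m\}$ and $i\ge 4$, so by (\ref{mcof2}) it remains to show $A_{1i}=A_{2i}=0$. The plan is to compute $d\omega_{13}$ in two ways: directly from $\omega_{13}=-\frac{B^1_{11,3}}{\mu}\omega_2$ together with the expression for $d\omega_2$ coming from (\ref{conne}), and through the structure equation $d\omega_{13}=\omega_{12}\wedge\omega_{23}-\frac{1}{2}\sum R_{13kl}\,\omega_k\wedge\omega_l$ using the Gauss equation (\ref{equa4}). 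Matching the coefficient of $\omega_3\wedge\omega_b$ for $b\ge 4$: the left-hand side carries no such term, while on the right Gauss gives $-R_{133b}=A_{1b}$; hence $A_{1b}=0$. A parallel calculation with $d\omega_{23}$ yields $A_{2b}=0$. Feeding these back into (\ref{mcof2}) forces $\omega_{3i}(E_1)=\omega_{3i}(E_2)=0$, so $\omega_{3i}\equiv 0$ for every $i\ge 4$.

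With $\omega_{3i}=0$, the reduced connection (\ref{conne}) makes every term of $\sum_k\omega_{3k}\wedge\omega_{ki}$ vanish, so $d\omega_{3i}=0$ forces $R_{3ikl}=0$ identically. Plugging into the Gauss equation (\ref{equa4}) at $(k,l)=(3,i)$ gives $R_{3i3i}=A_{33}+A_{ii}=0$, i.e.\ $A_{33}=-A_{ii}=A_1$; combined with $A_{11}+A_{33}=(B^1_{11,3})^2/\mu^2$ this forces $A_1=(B^1_{11,3})^2/(2\mu^2)$. Taking $(k,l)=(i,b)$ with $b\ge 4$, $b\ne i$, the same Gauss formula gives $R_{3iib}=-A_{3b}=0$, hence $A_{3b}=0$, completing the diagonalization of $\mathbf{A}$. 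The main computational hurdle is the careful extraction of $A_{1b}=0$ (and $A_{2b}=0$) from $d\omega_{13}$ (respectively $d\omega_{23}$) by tracking all the $\omega_{12}\wedge\omega_1$ terms after the reductions from (\ref{conne}); once this bookkeeping is done, the vanishing of $\omega_{3i}$ makes the remaining Gauss identities for the decoupled direction $E_3$ completely routine.
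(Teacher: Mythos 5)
Your computations, where they apply, are correct, and they elaborate the same route the paper takes (Proposition~\ref{c01} gives $\Phi=0$, then one specializes (\ref{mcof1})--(\ref{mcof4}) and (\ref{conne}) to $C^\alpha_i=0$). But there is a genuine gap in the case $B^1_{11,3}\neq 0$: every step you use to pin down $A_{33}$ and the value of $A_1$ --- the vanishing of $\omega_{3i}$, the identity $R_{3i3i}=A_{33}+A_{ii}=0$, and the conclusion $A_{33}=-A_{ii}=A_1$ --- presupposes the existence of an index $i\geq 4$, i.e.\ $m\geq 4$. When $m=3$ all of these statements are vacuous, and what survives of your argument only gives $A_{11}=A_{22}=A_1$, $A_{12}=A_{13}=A_{23}=0$ and $A_{11}+A_{33}=(B^1_{11,3})^2/\mu^2$; the claims $A_{33}=A_1$ and $A_1=(B^1_{11,3})^2/(2\mu^2)$ are never proved. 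This is not a peripheral case: $m=3$ with $B^1_{11,3}\neq0$ is exactly the situation fed into Proposition~\ref{integ3} (the Hopf--Veronese examples), so the proposition must be established there. A uniform fix is the integrability condition (\ref{equa1}), which under $\Phi=0$ says $A_{ij,k}=A_{ik,j}$. Using $A_{12}=A_{13}=A_{23}=0$, $A_{1b}=A_{2b}=0$, $A_{11}=A_{22}$ and the connection (\ref{conne}), the definition of the covariant derivative gives $\sum_k A_{12,k}\omega_k=(A_{11}-A_{22})\omega_{12}=0$, while
\[
\sum_k A_{13,k}\omega_k=A_{11}\omega_{13}+A_{33}\omega_{31}
=-(A_{11}-A_{33})\frac{B^1_{11,3}}{\mu}\,\omega_2 ,
\]
so $0=A_{12,3}=A_{13,2}=-(A_{11}-A_{33})B^1_{11,3}/\mu$. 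Hence $A_{33}=A_{11}=A_1$ for every $m\geq3$, and (\ref{mcof1}) then yields $A_1=(B^1_{11,3})^2/(2\mu^2)$.

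Two smaller points. First, the claimed diagonal form also requires $A_{3b}=0$ for $b\geq4$; you never address this when $B^1_{11,3}=0$, and when $B^1_{11,3}\neq0$ you derive it from $R_{3iib}=-A_{3b}$, which needs two distinct indices $\geq 4$ and therefore fails at $m=4$. Both cases follow from the coefficient of $\omega_1\wedge\omega_b$ in your own two-way computation of $d\omega_{13}$, which gives $R_{131b}=0$, combined with the Gauss equation (\ref{equa4}) in the form $R_{131b}=A_{3b}$ (or again from the Codazzi argument above). Second, your $d\omega_{13}$, $d\omega_{23}$ computations to prove $A_{1b}=A_{2b}=0$ are correct but unnecessary: once $\Phi\equiv0$, all covariant derivatives $C^\alpha_{i,j}$ vanish, so (\ref{mcof2}) reads $A_{1i}=C^1_{2,i}/\mu=0$ and $A_{2i}=C^1_{1,i}/\mu=0$ outright; the same equations also force $E_1(B^1_{11,3})=E_2(B^1_{11,3})=0$, and (\ref{mcof3}), (\ref{mcof4}) give the remaining derivatives, so the constancy of $B^1_{11,3}$ actually comes for free from $\Phi=0$ rather than from the homogeneity-and-canonical-frame argument you invoke (which is also shakier than you state, since with $\Phi=0$ the normalization of Lemma~\ref{le1} no longer fixes the frame up to signs).
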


\section{A canonical form of the normal connections}

Before going into the detail, we observe that among the basic examples, the Veronese surfaces in either $S^{2k}$ or $\mathbb{C}P^k$ have a well-known property as being totally isotropic. In particular, the normal bundle of any of these examples has a decomposition into a series of $2$-planes, and the complex normal bundle has a corresponding decomposition into isotropic complex lines. This beautiful structure is also shared by the Wintgen ideal submanifolds constructed from them by generating the cones.

In this section we will show that in the most important cases, a M\"obius homogeneous Wintgen ideal submanifold must have a similar decomposition of the normal bundle. The consequence is that they normal connections can take a canonical form with respect to a good normal frames as demonstrated by two propositions below.

\begin{PROPOSITION}\label{integ0}
Let $f:M^m\longrightarrow \mathbb{R}^{m+p}(m\geq3)$ be a M\"{o}bius homogeneous Wintgen ideal submanifold.
If $B^1_{11,3}=0$ and $\omega_{12}\neq0$,  then there exist basis $\{\xi_3,\xi_4,\cdots,\xi_p\}$ in $Span\{\xi_3,\xi_4,\cdots,\xi_p\}$ such that
the normal connection have the following form
\[
\left\{
\begin{array}{ll}
\left\{
\begin{array}{cc}
\theta_{13}=a_0\omega_2,\\
\theta_{23}=-a_0\omega_1,\\
\end{array}
\right.
\left\{
\begin{array}{cc}
\theta_{14}=a_0\omega_1,\\
\theta_{24}=a_0\omega_2,\\
\end{array}
\right.
\left\{
\begin{array}{cc}
\theta_{1\alpha}=0,~~\alpha\geq 5,\\
\theta_{2\alpha}=0,~~\alpha\geq 5,\\
\end{array}
\right.\\
\theta_{12}-\omega_{12}=\theta_{34}, ~~~a_0\neq0;
\end{array}
\right.
\]
\[
\left\{
\begin{array}{ll}
\left\{
\begin{array}{cc}
\theta_{(2k+1)(2k+3)}=a_k\omega_2,\\
\theta_{(2k+2)(2k+3)}=-a_k\omega_1,\\
\end{array}
\right.
\left\{
\begin{array}{cc}
\theta_{(2k+1)(2k+4)}=a_k\omega_1,\\
\theta_{(2k+2)(2k+4)}=a_k\omega_2,\\
\end{array}
\right.
\left\{
\begin{array}{cc}
\theta_{(2k+1)\alpha}=0,~~\alpha\geq 2k+5,\\
\theta_{(2k+2)\alpha}=0,~~\alpha\geq 2k+5,\\
\end{array}
\right.\\
\theta_{(2k+1)(2k+2)}-\omega_{12}=\theta_{(2k+3)(2k+4)},~~~a_k\neq0;
\end{array}
\right.
\]
$
\left\{
\begin{array}{cc}
\theta_{(2k+3)\alpha}=0,~~\alpha\geq 2k+5,\\
\theta_{(2k+4)\alpha}=0,~~\alpha\geq 2k+5,\\
\end{array}
\right.
$
\end{PROPOSITION}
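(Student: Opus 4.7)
The plan is to exploit the simplifications provided by the hypotheses $\Phi=0$ (established in Proposition~\ref{c01}) and $B^1_{11,3}=0$, and then to decompose the normal bundle inductively into pairs of rank two by using M\"obius homogeneity together with the flatness of many components of the normal curvature.

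First, under these hypotheses equation \eqref{conne} collapses to $\theta_{12}=-2\omega_{12}$ and $\omega_{1i}=\omega_{2i}=0$ for $i\geq 3$, so that $\mathbb{D}=\mathrm{span}\{E_1,E_2\}$ is a parallel two-plane distribution. Since all $C^\alpha_i$ vanish, the Codazzi identity \eqref{equa3} yields $B^\alpha_{12,1}=B^\alpha_{11,2}$ and $B^\alpha_{12,2}=B^\alpha_{22,1}$, so that the normal connection forms in \eqref{bb1} become $\theta_{1\alpha}=\mu^{-1}(B^\alpha_{11,2}\omega_1+B^\alpha_{22,1}\omega_2)$ and $\theta_{2\alpha}=\mu^{-1}(-B^\alpha_{22,1}\omega_1+B^\alpha_{11,2}\omega_2)$ for $\alpha\geq 3$. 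Mimicking the Gram--Schmidt-type rotation used in Lemma~\ref{le4}, I would choose $\xi_3$ to be parallel to $\sum_{\alpha\geq 3}B^\alpha_{22,1}\xi_\alpha$, then $\xi_4$ to be parallel to $\sum_{\alpha\geq 4}B^\alpha_{11,2}\xi_\alpha$. After this reduction only $\xi_3,\xi_4$ couple to $\{\xi_1,\xi_2\}$; the surviving coefficients are forced to be constants on $M^m$ by M\"obius homogeneity.

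Next I would impose the vanishing of the normal curvatures $R^\perp_{1\alpha ij}=R^\perp_{2\alpha ij}=0$ for $\alpha\geq 3$ (immediate from \eqref{eq-equality} and \eqref{equa5}) by expanding $d\theta_{1\alpha}$ and $d\theta_{2\alpha}$ according to the normal structure equation, exactly as done for \eqref{nor132} in Lemma~\ref{le4}. Eliminating $\theta_{34}$ from the resulting four relations yields an analog of \eqref{norm1}, except that the non-degeneracy factor $2C^1_1/\mu-3C^1_{2,1}/C^1_1$ appearing there (now identically zero since $\Phi=0$) is replaced by a factor involving $\omega_{12}$. This forces $B^3_{11,2}=0$, so that $\theta_{13}=a_0\omega_2$, $\theta_{23}=-a_0\omega_1$ with $a_0:=B^3_{22,1}/\mu$, and after possibly flipping $\xi_4$ we also get $\theta_{14}=a_0\omega_1$, $\theta_{24}=a_0\omega_2$. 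The hypothesis $\omega_{12}\neq 0$ enters precisely when verifying $a_0\neq 0$: if $a_0$ were zero, the identity \eqref{nor12} specialized to $C^1_1=0$ combined with Gauss's equation \eqref{equa4} would force $R_{1212}=0$ and hence $\omega_{12}=0$, contradicting the hypothesis. Finally, comparing the two expressions for $d\theta_{13}$ coming from the exterior derivative of $a_0\omega_2$ and from $\theta_{12}\wedge\theta_{23}-\theta_{14}\wedge\theta_{34}$ (and similarly for $d\theta_{14}$) pins down $\theta_{34}$ as a specific linear combination of $\theta_{12}$ and $\omega_{12}$, yielding the stated relation.

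The second pair $\xi_5,\xi_6$ is then obtained by applying the same rotation-plus-flatness procedure to $\mathrm{span}\{\xi_5,\dots,\xi_p\}$, using this time $R^\perp_{3\alpha ij}=R^\perp_{4\alpha ij}=0$ for $\alpha\geq 5$ (again from $B^\alpha=0$, $\alpha\geq 3$). The role played by $\omega_{12}$ in the first round is taken over by $\theta_{34}-\theta_{12}$, whose non-vanishing is inherited from the relation produced in the first step; this guarantees $a_1\neq 0$. Induction produces the full tower and the escalator relation $\theta_{(2k+1)(2k+2)}-\omega_{12}=\theta_{(2k+3)(2k+4)}$. The main technical obstacle is propagating the non-degeneracy $a_k\neq 0$ through every level: one must verify that the driving one-form for the $k$-th step is a known nonzero multiple of $\omega_{12}$, so that a putative $a_k=0$ would cascade back through the structure equations to $\omega_{12}=0$. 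The induction terminates only when the normal codimension is exhausted, reproducing the block structure claimed in the proposition.
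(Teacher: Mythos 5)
Your outline reproduces the paper's own strategy almost step for step: the Codazzi symmetry $B^{\alpha}_{ij,k}=B^{\alpha}_{ik,j}$ (available because $\Phi=0$ by Proposition~\ref{c01}) together with tracelessness gives your expressions for $\theta_{1\alpha},\theta_{2\alpha}$; the two successive rotations singling out $\xi_3,\xi_4$ are exactly the paper's choice; the flatness $R^{\perp}_{1\alpha ij}=R^{\perp}_{2\alpha ij}=0$, the elimination of $\theta_{34}$, and the resulting system \eqref{2hi22} are the same; and since $\Theta=\theta_{12}-\omega_{12}=-3\omega_{12}\neq 0$ by \eqref{ww}, that system forces $D=0$, hence $B^3_{11,2}=0$ and $(B^4_{11,2})^2=(B^3_{22,1})^2$, after which $\theta_{34}=\theta_{12}-\omega_{12}$ follows just as you derive it. The induction is also the paper's (with the minor slip that the driving form at the next level is $\theta_{34}-\omega_{12}=-4\omega_{12}$, not $\theta_{34}-\theta_{12}$).

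The genuine gap is your justification of $a_k\neq 0$. First, the identity \eqref{nor12} you invoke is not available here: it was proved in Lemma~\ref{le30} under the hypothesis $\Phi\neq 0$, and its derivation uses \eqref{noru} (both $\omega_{12}$ and $\theta_{12}$ proportional to $\omega_1$, $d\omega_2=0$), none of which holds in the present setting where $\theta_{12}=-2\omega_{12}$. Redoing the computation with $\theta_{12}=-2\omega_{12}$, $R^{\perp}_{1212}=-2\mu^2$ from \eqref{equa5}, and $d\omega_{12}=-R_{1212}\,\omega_1\wedge\omega_2$, the correct analogue is
\begin{equation*}
2\mu^2 \;=\; 2R_{1212}+\sum_{\alpha\geq 3}\frac{(B^{\alpha}_{11,2})^2+(B^{\alpha}_{22,1})^2}{\mu^2},
\end{equation*}
so $a_0=0$ yields $R_{1212}=\mu^2\neq 0$ --- no contradiction at all (on the contrary, it makes $d\omega_{12}\neq 0$, hence $\omega_{12}\neq 0$). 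Moreover the step ``$R_{1212}=0$ and hence $\omega_{12}=0$'' is backwards: vanishing of a curvature does not kill a connection form. Most importantly, $a_0\neq 0$ simply cannot be proved, because $a_0=0$ genuinely occurs: take the cone over a Veronese surface in $S^4$ and regard it as a submanifold of a larger $\mathbb{R}^{m+p}$; then $\mathrm{Span}\{\xi_1,\xi_2\}$ is parallel, all $\theta_{1\alpha},\theta_{2\alpha}$ with $\alpha\geq 3$ vanish, while $B^1_{11,3}=0$ and $\omega_{12}\neq 0$. The paper handles this by termination, not contradiction: when $B^3_{22,1}=0$, the condition $D=0$ forces $B^3_{11,2}=B^4_{11,2}=0$, every higher coupling vanishes, and the proof simply ends there (the codimension reduces); the same dichotomy ($b=0\Rightarrow a=c=0$, stop) is applied at each inductive level. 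So the hypothesis $\omega_{12}\neq 0$ is needed exactly where the paper uses it --- to force $D=0$ at every level --- and your claim that ``a putative $a_k=0$ would cascade back through the structure equations to $\omega_{12}=0$'' is false; once the coupling forms vanish, all higher flatness equations are trivially satisfied and nothing propagates back. Your induction must be restructured as this either/or dichotomy rather than as a proof that every $a_k$ is nonzero.
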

\begin{proof}
Since $B^1_{11,3}=0$, from (\ref{conne}), we have
\begin{equation}\label{ww}
2\omega_{12}+\theta_{12}=0,~~~d\omega_1=\omega_{12}\wedge\omega_2,~~~d\omega_2=-\omega_{12}\wedge\omega_1.
\end{equation}
We assume  $p\geq 3$. We can rechoose an orthonormal basis in $Span\{\xi_3,\cdots,\xi_p\}$ such that
\begin{equation}\label{2nor131}
\left\{
\begin{array}{cc}
\theta_{13}=\frac{B^3_{11,2}}{\mu}\omega_1+\frac{B^3_{22,1}}{\mu}\omega_2,\\
\theta_{23}=-\frac{B^3_{22,1}}{\mu}\omega_1+\frac{B^3_{11,2}}{\mu}\omega_2,\\
\end{array}
\right.
\left\{
\begin{array}{cc}
\theta_{14}=\frac{B^4_{11,2}}{\mu}\omega_1,\\
\theta_{24}=\frac{B^4_{11,2}}{\mu}\omega_2,\\
\end{array}
\right.
\left\{
\begin{array}{cc}
\theta_{1\alpha}=0,~~\alpha\geq 5,\\
\theta_{2\alpha}=0,~~\alpha\geq 5,\\
\end{array}
\right.
\end{equation}
Using $d\theta_{13}-\sum_{\tau}\theta_{1\tau}\wedge\theta_{\tau3}=
-\frac{1}{2}\sum_{kl}R^{\bot}_{13 kl}\omega_k\wedge\omega_l$ and (\ref{2nor131}), and noting $R^{\bot}_{13 kl}=0$, we obtain
\begin{equation}\label{hi2}
\frac{B^3_{11,2}}{\mu}\Theta\wedge\omega_2-\frac{B^3_{22,1}}{\mu}\Theta\wedge\omega_1+\frac{B^4_{11,2}}{\mu}\theta_{34}\wedge\omega_1=0, ~~~\Theta
=\theta_{12}-\omega_{12}.
\end{equation}
Similarly, we have
\begin{equation}\label{2hi21}
\begin{split}
&\frac{B^3_{22,1}}{\mu}\Theta\wedge\omega_2+\frac{B^3_{11,2}}{\mu}
\Theta\wedge\omega_1-\frac{B^4_{11,2}}{\mu}\theta_{34}\wedge
\omega_2=0,\\
&\frac{B^4_{11,2}}{\mu}\Theta\wedge\omega_1
-\left(-\frac{B^3_{22,1}}{\mu}\omega_1+\frac{B^3_{11,2}}{\mu}\omega_2\right)
\wedge\theta_{34}=0,\\
&\frac{B^4_{11,2}}{\mu}\Theta\wedge\omega_2
+\left(\frac{B^3_{11,2}}{\mu}\omega_1+\frac{B^3_{22,1}}{\mu}\omega_2\right)\wedge\theta_{34}=0.
\end{split}
\end{equation}
From (\ref{hi2}) and (\ref{2hi21}), we eliminate the terms $\omega_1\wedge\theta_{34}$ and $\omega_2\wedge\theta_{34}$, and we get
\begin{equation}\label{2hi22}
\begin{split}
2\frac{B^3_{22,1}B^3_{11,2}}{\mu^2}\Theta\wedge\omega_2+
\left[(\frac{B^3_{11,2}}{\mu})^2-(\frac{B^3_{22,1}}{\mu})^2
+(\frac{B^4_{11,2}}{\mu})^2\right]\Theta\wedge\omega_1=0,\\
\left[(\frac{B^3_{11,2}}{\mu})^2-(\frac{B^3_{22,1}}{\mu})^2
+(\frac{B^4_{11,2}}{\mu})^2\right]\Theta\wedge\omega_2
-2\frac{B^3_{22,1}B^3_{11,2}}{\mu^2}\Theta\wedge\omega_1=0.
\end{split}
\end{equation}
Let $D=[2\frac{B^3_{22,1}B^3_{11,2}}{\mu^2}]^2+[(\frac{B^3_{11,2}}{\mu})^2-(\frac{B^3_{22,1}}{\mu})^2+(\frac{B^4_{11,2}}{\mu})^2]^2$.

If $D\neq0$, then, from (\ref{2hi22}),
we have
$$\Theta=\theta_{12}-\omega_{12}=0.$$
Combining (\ref{ww}), we have
$\omega_{12}=0$, which is in contradiction with the assumption $\omega_{12}\neq0$.
Thus $D=0.$

If $B^3_{22,1}=0$, noting $D=0$, we get $B^3_{11,2}=B^4_{11,2}=0$.
\[
\left\{
\begin{array}{cc}
\theta_{1\alpha}=0,~~\alpha\geq 3,\\
\theta_{2\alpha}=0,~~\alpha\geq 3,\\
\end{array}
\right.
\]
Thus we finish the proof.

If $B^3_{22,1}\neq0$, we get $B^3_{11,2}=0$ and $(B^3_{22,1})^2=(B^4_{11,2})^2$. Let $B^3_{22,1}=B^4_{11,2}\triangleq\mu a_0$, otherwise, take $\tilde{\xi}_4=-\xi_4$.
From (\ref{2nor131}), we have
\[
\left\{
\begin{array}{cc}
\theta_{13}=a_0\omega_2,\\
\theta_{23}=-a_0\omega_1,\\
\end{array}
\right.
\left\{
\begin{array}{cc}
\theta_{14}=a_0\omega_1,\\
\theta_{24}=a_0\omega_2,\\
\end{array}
\right.
\left\{
\begin{array}{cc}
\theta_{1\alpha}=0,~~\alpha\geq 5,\\
\theta_{2\alpha}=0,~~\alpha\geq 5,\\
\end{array}
\right.\]
\[
\theta_{12}-\omega_{12}=\theta_{34}.
\]
Combining the above formula with $d\theta_{1\alpha}=\sum_{\tau}\theta_{1\tau}\wedge
\theta_{\tau\alpha}$, we have
\begin{equation*}
\omega_2\wedge\theta_{3\alpha}+\omega_1\wedge\omega_{4\alpha}=0,~~\alpha\geq 5,~~~-\omega_1\wedge\theta_{3\alpha}+\omega_2\wedge\omega_{4\alpha}=0, ~~\alpha\geq 5.
\end{equation*}
Thus we can assume that
\begin{equation}\label{w12nor}
\left\{
\begin{array}{cc}
\theta_{3\alpha}=a^{\alpha}_1\omega_1+a^{\alpha}_2\omega_2,\\
\theta_{4\alpha}=-a^{\alpha}_2\omega_1+a^{\alpha}_1\omega_2,\\
\end{array}
\right.
\alpha\geq5.
\end{equation}
We can make a new choice of orthonormal frames in $Span\{\xi_5,\cdots,\xi_p\}$ such that
\begin{equation}\label{w12nor1}
\left\{
\begin{array}{cc}
\theta_{35}=a\omega_1+b\omega_2,\\
\theta_{45}=-b\omega_1+a\omega_2,\\
\end{array}
\right.
\left\{
\begin{array}{cc}
\theta_{36}=c\omega_1,\\
\theta_{46}=c\omega_2,\\
\end{array}
\right.
\left\{
\begin{array}{cc}
\theta_{3\alpha}=0,~~\alpha\geq 7,\\
\theta_{4\alpha}=0,~~\alpha\geq 7.\\
\end{array}
\right.
\end{equation}
Using $d\theta_{\alpha\beta}=\sum_{\gamma}\theta_{\alpha\gamma}\wedge\theta_{\gamma\beta}$ for $\alpha=3,4, \beta=5,6$ and (\ref{w12nor1}), we can obtain
that
\begin{equation}\label{w12nor2}
\begin{split}
&a\Theta\wedge\omega_2-b\Theta\wedge\omega_1-c\omega_1\wedge\theta_{56}=0,\\
&b\Theta\wedge\omega_2+a\Theta\wedge\omega_1+c\omega_2\wedge\theta_{56}=0,\\
&c\Theta\wedge\omega_2+[a\omega_1+b\omega_2]\wedge\theta_{56}=0,\\
&c\Theta\wedge\omega_1-[-b\omega_1+a\omega_2]\wedge\theta_{56}=0,
\end{split}
\end{equation}
where $\Theta=\theta_{34}-\omega_{12}$.

From (\ref{w12nor2}), we have
\begin{equation}\label{w12nor3}
\left\{
\begin{array}{cc}
2ab\Theta\wedge\omega_1+(b^2-a^2-c^2)\Theta\wedge\omega_2=0,\\
(a^2-b^2+c^2)\Theta\wedge\omega_1+2ab\Theta\wedge\omega_2=0.\\
\end{array}
\right.
\end{equation}
If $D=4a^2b^2+[a^2-b^2+c^2]^2\neq0$, then (\ref{w12nor3}) implies that $$\Theta=\theta_{34}-\omega_{12}=0.$$ From (\ref{ww})
 $2\omega_{12}+\theta_{12}=0$ and $\theta_{34}=\theta_{12}-\omega_{12}$, we obtain that $\omega_{12}=0$, which is in contradiction with the assumption
$\omega_{12}\neq0.$ Therefore $$D=4a^2b^2+[a^2-b^2+c^2]^2=0.$$

If $b=0$, combining with $D=4a^2b^2+[a^2-b^2+c^2]^2=0$, we have $a=c=0$. Thus we finish the proof.

If $b\neq0$, then combining with $D=4a^2b^2+[a^2-b^2+c^2]^2=0$, we have $a=0$ and $b=\pm c$. Let $b=c\triangleq a_1$, otherwise $\tilde{\xi}_6=-\xi_6$. From (\ref{w12nor1}),
we have
\begin{equation}\label{w12nor4}
\left\{
\begin{array}{cc}
\theta_{35}=a_1\omega_2,\\
\theta_{45}=-a_1\omega_1,\\
\end{array}
\right.
\left\{
\begin{array}{cc}
\theta_{36}=a_1\omega_1,\\
\theta_{46}=a_1\omega_2,\\
\end{array}
\right.
\left\{
\begin{array}{cc}
\theta_{3\alpha}=0,~~\alpha\geq 7,\\
\theta_{4\alpha}=0,~~\alpha\geq 7,\\
\end{array}
\right.
\theta_{56}=\theta_{34}-\omega_{12}.
\end{equation}
Repeating the process (\ref{w12nor}--\ref{w12nor4}), we finish the proof.
\end{proof}
\begin{PROPOSITION}\label{3d}
Let $f:M^3\longrightarrow \mathbb{R}^{3+p}$ be a M\"{o}bius homogeneous Wintgen ideal submanifold.
If $B^1_{11,3}\neq0$,  then there exist basis $\{\xi_3,\xi_4,\cdots,\xi_p\}$ in $Span\{\xi_3,\xi_4,\cdots,\xi_p\}$ such that
the normal connection have the following form
\[
\left\{
\begin{array}{ll}
\left\{
\begin{array}{cc}
\theta_{13}=a_0\omega_2,\\
\theta_{23}=-a_0\omega_1,\\
\end{array}
\right.
\left\{
\begin{array}{cc}
\theta_{14}=a_0\omega_1,\\
\theta_{24}=a_0\omega_2,\\
\end{array}
\right.
\left\{
\begin{array}{cc}
\theta_{1\alpha}=0,~~\alpha\geq 5,\\
\theta_{2\alpha}=0,~~\alpha\geq 5,\\
\end{array}
\right.\\
\theta_{12}-\omega_{12}-\frac{B^1_{11,3}}{\mu}\omega_3=\theta_{34},~~~a_0\neq0;
\end{array}
\right.
\]
\[
\left\{
\begin{array}{ll}
\left\{
\begin{array}{cc}
\theta_{(2k+1)(2k+3)}=a_k\omega_2,\\
\theta_{(2k+2)(2k+3)}=-a_k\omega_1,\\
\end{array}
\right.
\left\{
\begin{array}{cc}
\theta_{(2k+1)(2k+4)}=a_k\omega_1,\\
\theta_{(2k+2)(2k+4)}=a_k\omega_2,\\
\end{array}
\right.
\left\{
\begin{array}{cc}
\theta_{(2k+1)\alpha}=0,~~\alpha\geq 2k+5,\\
\theta_{(2k+2)\alpha}=0,~~\alpha\geq 2k+5,\\
\end{array}
\right.\\
\theta_{(2k+1)(2k+2)}-\omega_{12}-\frac{B^1_{11,3}}{\mu}\omega_3=\theta_{(2k+3)(2k+4)},~~~ a_k\neq0,
\end{array}
\right.
\]
$\left\{
\begin{array}{cc}
\theta_{(2k+3)\alpha}=0,~~\alpha\geq 2k+5,\\
\theta_{(2k+4)\alpha}=0,~~\alpha\geq 2k+5.\\
\end{array}
\right.$
\end{PROPOSITION}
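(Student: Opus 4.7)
The plan is to adapt the argument of Proposition~\ref{integ0} to the three-dimensional setting in which $\omega_{13}=-\frac{B^1_{11,3}}{\mu}\omega_2$, $\omega_{23}=\frac{B^1_{11,3}}{\mu}\omega_1$ and $2\omega_{12}+\theta_{12}=-\frac{B^1_{11,3}}{\mu}\omega_3$ from \eqref{conne} all carry an additional $\omega_3$ contribution. The first step is to set up the initial normal frame. Using \eqref{bb1} together with the Codazzi identities $B^\alpha_{ij,k}=B^\alpha_{ik,j}$ and $B^\alpha_{11,k}+B^\alpha_{22,k}=0$ (both valid for $\alpha\geq 3$ because $C^\alpha_k=0$ and $m=3$ forces $B^\alpha_{33,k}=0$), one rewrites
$$\theta_{1\alpha}=\frac{B^\alpha_{11,2}}{\mu}\omega_1+\frac{B^\alpha_{22,1}}{\mu}\omega_2,\qquad \theta_{2\alpha}=-\frac{B^\alpha_{22,1}}{\mu}\omega_1+\frac{B^\alpha_{11,2}}{\mu}\omega_2.$$
Exactly as in the opening of Lemma~\ref{le4} and Proposition~\ref{integ0}, one then chooses $\xi_3,\xi_4$ aligned with $\sum_\alpha B^\alpha_{22,1}\xi_\alpha$ and $\sum_\alpha B^\alpha_{11,2}\xi_\alpha$ inside $\operatorname{Span}\{\xi_3,\ldots,\xi_p\}$, which concentrates all the relevant data into $\xi_3,\xi_4$ and produces $\theta_{1\alpha}=\theta_{2\alpha}=0$ for $\alpha\geq 5$; Möbius homogeneity then forces $B^3_{22,1}, B^3_{11,2}, B^4_{11,2}$ to be constants.

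The core of the argument is the Ricci equation \eqref{equa5}, which gives $R^\perp_{1\alpha kl}=R^\perp_{2\alpha kl}=0$ for $\alpha\geq 3$, reducing the structure equations to $d\theta_{1\alpha}=\sum_\tau\theta_{1\tau}\wedge\theta_{\tau\alpha}$ and similarly for $\theta_{2\alpha}$. Expanding both sides with the three-dimensional formulas
$$d\omega_1=\omega_{12}\wedge\omega_2-\tfrac{B^1_{11,3}}{\mu}\omega_2\wedge\omega_3,\qquad d\omega_2=-\omega_{12}\wedge\omega_1+\tfrac{B^1_{11,3}}{\mu}\omega_1\wedge\omega_3,$$
and substituting $\theta_{12}=-2\omega_{12}-\frac{B^1_{11,3}}{\mu}\omega_3$ in the $\theta_{12}\wedge\theta_{2\alpha}$ terms, the resulting $2$-form system is formally identical to \eqref{hi2}--\eqref{2hi22} of Proposition~\ref{integ0} provided the old $1$-form $\theta_{12}-\omega_{12}$ is replaced by
$$\Theta:=\theta_{12}-\omega_{12}-\tfrac{B^1_{11,3}}{\mu}\omega_3.$$
The same elimination of $\omega_1\wedge\theta_{34}$ and $\omega_2\wedge\theta_{34}$ then yields the dichotomy: either all three of $B^3_{22,1}, B^3_{11,2}, B^4_{11,2}$ vanish, so $\theta_{1\alpha}=\theta_{2\alpha}=0$ for every $\alpha\geq 3$ and the decomposition terminates after the initial pair, or $B^3_{11,2}=0$ together with $a_0:=B^3_{22,1}/\mu=B^4_{11,2}/\mu\neq 0$ (after a possible sign change $\xi_4\mapsto-\xi_4$), giving the asserted forms of $\theta_{13},\theta_{23},\theta_{14},\theta_{24}$ together with the identity $\theta_{34}=\Theta$, i.e.\ $\theta_{12}-\omega_{12}-\frac{B^1_{11,3}}{\mu}\omega_3=\theta_{34}$.

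The induction step is then carried out exactly as in the second half of the proof of Proposition~\ref{integ0}: assuming $\xi_{2k+1},\xi_{2k+2}$ and the identity $\theta_{(2k+1)(2k+2)}-\omega_{12}-\frac{B^1_{11,3}}{\mu}\omega_3=\theta_{(2k+3)(2k+4)}$ have already been produced, one applies $d\theta_{(2k+1)\alpha}=\sum_\tau\theta_{(2k+1)\tau}\wedge\theta_{\tau\alpha}$ for $\alpha\geq 2k+5$ (still with $R^\perp_{(2k+1)\alpha kl}=0$) and runs the same elimination as in \eqref{w12nor2}--\eqref{w12nor3} with $\omega_{12}$ uniformly shifted to $\omega_{12}+\frac{B^1_{11,3}}{\mu}\omega_3$, extracting either a new pair $\xi_{2k+3},\xi_{2k+4}$ with a nonzero constant $a_{k+1}$ or else the termination of the process. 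The main obstacle is purely bookkeeping: every $\omega_{12}$ appearing in the Proposition~\ref{integ0} computations must be replaced by the shifted $1$-form $\omega_{12}+\frac{B^1_{11,3}}{\mu}\omega_3$, and the additional $\omega_3$-contributions from $d\omega_1,d\omega_2$ have to be tracked to confirm they are absorbed cleanly by this shift rather than generating a genuine inconsistency. Once the arithmetic is organized around $\Theta$, no fundamentally new ingredient beyond Propositions~\ref{integ0} and~\ref{blasch} is required.
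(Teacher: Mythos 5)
Your frame set-up, the identification of the shifted $1$-form $\Theta=\theta_{12}-\omega_{12}-\frac{B^1_{11,3}}{\mu}\omega_3$, and the elimination leading to the two-form system all agree with the paper's proof, as does your induction skeleton. But there is a genuine gap at the decisive step. The elimination does not ``yield the dichotomy'' directly: it yields a linear system in $\Theta\wedge\omega_1,\Theta\wedge\omega_2$ whose determinant is
$D=\bigl[2\tfrac{B^3_{22,1}B^3_{11,2}}{\mu^2}\bigr]^2+\bigl[(\tfrac{B^3_{11,2}}{\mu})^2-(\tfrac{B^3_{22,1}}{\mu})^2+(\tfrac{B^4_{11,2}}{\mu})^2\bigr]^2$,
so the true alternative is: either $D=0$ (which then gives your dichotomy on the coefficients $B^3_{22,1},B^3_{11,2},B^4_{11,2}$), or $\Theta=0$. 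In Proposition~\ref{integ0} the branch $\Theta=0$ is killed in one line because $\theta_{12}-\omega_{12}=0$ together with \eqref{ww} forces $\omega_{12}=0$, contradicting the standing hypothesis $\omega_{12}\neq0$ of that proposition. Proposition~\ref{3d} has no such hypothesis, and $\Theta=0$ is perfectly consistent at the level of connection forms: combined with $2\omega_{12}+\theta_{12}+\frac{B^1_{11,3}}{\mu}\omega_3=0$ from \eqref{conne} it merely gives $\omega_{12}=-\frac{2B^1_{11,3}}{3\mu}\omega_3$ and $\theta_{12}=\frac{B^1_{11,3}}{3\mu}\omega_3$, with no contradiction in sight.

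Ruling out this branch is exactly Claim~1 in the paper's proof, and it needs a fundamentally new ingredient that your proposal explicitly disclaims: starting from the formulas above one computes $d\omega_3$, then $R_{1212}=-\frac{7}{3}(\frac{B^1_{11,3}}{\mu})^2$ and $R^{\perp}_{1212}$ as in \eqref{curn1}, feeds these into the Gauss and Ricci equations to obtain \eqref{curn2}, and finally invokes the Simons-type identity \eqref{lb} for $0=\frac{1}{2}\Delta|B|^2$ (valid since $|B|^2$ is constant and $\Phi=0$); combining all of this forces $\mu=0$, a contradiction, whence $D=0$. Nothing in Propositions~\ref{integ0} or~\ref{blasch} substitutes for this curvature-plus-Bochner argument, so your closing claim that ``no fundamentally new ingredient is required'' is precisely where the proof breaks. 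Note also that the same issue recurs at every induction step: before the new pair $\xi_{2k+3},\xi_{2k+4}$ can be extracted, the paper must again prove $[2ab]^2+[a^2-b^2+c^2]^2=0$ ``like Claim~1,'' so the gap is not confined to the base case.
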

\begin{proof} We use induction for $k$ to prove the Proposition~ \ref{3d}.
Choose a new orthonormal basis in $Span\{\xi_3,\cdots,\xi_p\}$ such that
\begin{equation}\label{nor31}
\left\{
\begin{array}{cc}
\theta_{13}=\frac{B^3_{11,2}}{\mu}\omega_1+\frac{B^3_{22,1}}{\mu}\omega_2,\\
\theta_{23}=-\frac{B^3_{22,1}}{\mu}\omega_1+\frac{B^3_{11,2}}{\mu}\omega_2,\\
\end{array}
\right.
\left\{
\begin{array}{cc}
\theta_{14}=\frac{B^4_{11,2}}{\mu}\omega_1,\\
\theta_{24}=\frac{B^4_{11,2}}{\mu}\omega_2,\\
\end{array}
\right.
\left\{
\begin{array}{cc}
\theta_{1\alpha}=0,~~\alpha\geq 5,\\
\theta_{2\alpha}=0,~~\alpha\geq 5,\\
\end{array}
\right.
\end{equation}
Using $d\theta_{\alpha\beta}-\sum_{\tau}\theta_{\alpha\tau}\wedge\theta_{\tau\beta}=
-\frac{1}{2}\sum_{kl}R^{\bot}_{\alpha\beta kl}\omega_k\wedge\omega_l$ and (\ref{nor31}), and noting $R^{\bot}_{\alpha\beta kl}=0$ for $\alpha=1,2,\beta=3,4$, we obtain
\begin{equation}\label{no31}
\begin{split}
&\frac{B^3_{11,2}}{\mu}\Theta\wedge\omega_2-\frac{B^3_{22,1}}{\mu}\Theta\wedge\omega_1+\frac{B^4_{11,2}}{\mu}\theta_{34}\wedge\omega_1=0,\\
&\frac{B^3_{22,1}}{\mu}\Theta\wedge\omega_2+\frac{B^3_{11,2}}{\mu}\Theta\wedge\omega_1-\frac{B^4_{11,2}}{\mu}\theta_{34}\wedge\omega_2=0,\\
&\frac{B^4_{11,2}}{\mu}\Theta\wedge\omega_1-
\left(-\frac{B^3_{22,1}}{\mu}\omega_1+\frac{B^3_{11,2}}{\mu}\omega_2\right)
\wedge\theta_{34}=0,\\
&\frac{B^4_{11,2}}{\mu}\Theta\wedge\omega_2+
\left(\frac{B^3_{11,2}}{\mu}\omega_1+\frac{B^3_{22,1}}{\mu}\omega_2\right)
\wedge\theta_{34}=0,
\end{split}
\end{equation}
where $\Theta
=\theta_{12}-\omega_{12}-\frac{B^1_{11,3}}{\mu}\omega_3,$

From (\ref{no31}), we eliminate the terms $\omega_1\wedge\theta_{34}$ and $\omega_2\wedge\theta_{34}$, and we get
\begin{equation}\label{no321}
\begin{split}
2\frac{B^3_{22,1}B^3_{11,2}}{\mu^2}\Theta\wedge\omega_2+
\left[\left(\frac{B^3_{11,2}}{\mu}\right)^2
-\left(\frac{B^3_{22,1}}{\mu}\right)^2
+\left(\frac{B^4_{11,2}}{\mu}\right)^2\right]\Theta\wedge\omega_1=0,\\
\left[\left(\frac{B^3_{11,2}}{\mu}\right)^2
-\left(\frac{B^3_{22,1}}{\mu}\right)^2
+\left(\frac{B^4_{11,2}}{\mu}\right)^2\right]\Theta\wedge\omega_2-2\frac{B^3_{22,1}B^3_{11,2}}{\mu^2}\Theta\wedge\omega_1=0.
\end{split}
\end{equation}
{\bf Claim1}: $D=[2\frac{B^3_{22,1}B^3_{11,2}}{\mu^2}]^2+[(\frac{B^3_{11,2}}{\mu})^2-(\frac{B^3_{22,1}}{\mu})^2+(\frac{B^4_{11,2}}{\mu})^2]^2$=0.

Proof of Claim1: Since $|B|^2=\frac{m-1}{m}$, using covariant derivative of
$B^{\alpha}_{ij,k}$ and $\Phi=0$, we have
\begin{equation}\label{lb}
\begin{split}
0=\frac{1}{2}\Delta|B|^2&=\sum_{\alpha,i,j,k}|B^{\alpha}_{ij,k}|^2+\sum_{\alpha,i,j,k,l}B^{\alpha}_{ij}B^{\alpha}_{kl}R_{kijl}\\
&+\sum_{\alpha,i,j,k,l}B^{\alpha}_{ij}B^{\alpha}_{ik}R_{jlkl}-\sum_{\alpha,\beta,i,j,k}B^{\alpha}_{ij}B^{\beta}_{ki}R^{\bot}_{\alpha\beta jk}.
\end{split}
\end{equation}
If $D\neq0$, from (\ref{no321}) we have
$\Theta=0,$ $\theta_{12}-\omega_{12}-\frac{B^1_{11,3}}{\mu}\omega_3=0.$ Combining $2\omega_{12}+\theta_{12}+\frac{B^1_{11,3}}{\mu}\omega_3=0$, we have
\begin{equation}\label{wn1}
\omega_{12}=-\frac{2B^1_{11,3}}{3\mu}\omega_3,~~~~~~~\theta_{12}=\frac{B^1_{11,3}}{3\mu}\omega_3.
\end{equation}
Using $d\omega_3=-2\frac{B^1_{11,3}}{\mu}\omega_1\wedge\omega_2$, $d\omega_{12}-\sum_k\omega_{1k}\wedge\omega_{k2}=-\frac{1}{2}\sum_{kl}R_{12kl}\omega_k\wedge\omega_l$ and
$d\theta_{12}-\sum_k\theta_{1k}\wedge\theta_{k2}=-\frac{1}{2}\sum_{kl}R^{\bot}_{12kl}\omega_k\wedge\omega_l$, we obtain
\begin{equation}\label{curn1}
R_{1212}=-\frac{7}{3}\left(\frac{B^1_{11,3}}{\mu}\right)^2,~~~
R^{\bot}_{1212}=\frac{2}{3}\left(\frac{B^1_{11,3}}{\mu}\right)^2
-\left(\frac{B^3_{22,1}}{\mu}\right)^2
+\left(\frac{B^3_{11,2}}{\mu}\right)^2
+\left(\frac{B^4_{11,2}}{\mu}\right)^2.
\end{equation}
Combining (\ref{equa4}), we have
\begin{equation}\label{curn2}
\mu^2=\frac{5}{3}\left(\frac{B^1_{11,3}}{\mu}\right)^2,~~~
4\left(\frac{B^1_{11,3}}{\mu}\right)^2=
\left(\frac{B^3_{22,1}}{\mu}\right)^2+
\left(\frac{B^3_{11,2}}{\mu}\right)^2+
\left(\frac{B^4_{11,2}}{\mu}\right)^2.
\end{equation}
Combining (\ref{lb}), (\ref{curn1}) and (\ref{curn2}), we get $\mu=0$, which is a contradiction. so $D=0$.

If $B^3_{22,1}=0$. Since $D=0$, then $B^3_{11,2}=B^4_{11,2}=0$, that is
$$\theta_{1\alpha}=0,~~\alpha\geq3,~~~~~\theta_{2\alpha}=0,~~\alpha\geq3.$$
This implies that the codimension of $f$ can reduce to $2$ and $a_k=0$ for $k=0$.

If $B^3_{22,1}\neq 0$. Since $D=0$, then $B^3_{11,2}=0$ and $(B^3_{22,1})^2=(B^4_{11,2})^2$. We can assume that $B^3_{22,1}=B^4_{11,2}$, otherwise, take
$\tilde{\xi}_4=-\xi_4$. Let $a_0=\frac{B^4_{11,2}}{\mu}$. Thus
\[
\left\{
\begin{array}{cc}
\theta_{13}=a_0\omega_2,\\
\theta_{23}=-a_0\omega_1,\\
\end{array}
\right.
\left\{
\begin{array}{cc}
\theta_{14}=a_0\omega_1,\\
\theta_{24}=a_0\omega_2,\\
\end{array}
\right.
\left\{
\begin{array}{cc}
\theta_{1\alpha}=0,~~\alpha\geq 5,\\
\theta_{2\alpha}=0,~~\alpha\geq 5,\\
\end{array}
\right.
\]
From (\ref{no31}), we get
$$\theta_{34}=\Theta=\theta_{12}-\omega_{12}-\frac{B^1_{11,3}}{\mu}\omega_3.$$

Next we assume
\begin{equation}\label{rema}
\left\{
\begin{array}{cc}
\theta_{(2k-1)(2k+1)}=a_{k-1}\omega_2,\\
\theta_{(2k)(2k+1)}=-a_{k-1}\omega_1,\\
\end{array}
\right.
\left\{
\begin{array}{cc}
\theta_{(2k-1)(2k+2)}=a_{k-1}\omega_1,\\
\theta_{(2k)(2k+2)}=a_{k-1}\omega_2,\\
\end{array}
\right.
\end{equation}
\begin{equation}\label{rema1}
\left\{
\begin{array}{cc}
\theta_{(2k-1)\alpha}=0,~~\alpha\geq 2k+3,\\
\theta_{(2k)\alpha}=0,~~\alpha\geq 2k+3,\\
\end{array}
\right.
\end{equation}
\begin{equation}\label{rema2}
\theta_{(2k-1)(2k)}-\omega_{12}-\frac{B^1_{11,3}}{\mu}\omega_3=\theta_{(2k+1)(2k+2)}.
\end{equation}
Using $d\theta_{(2k-1)\alpha}-\sum_{\tau}\theta_{(2k-1)\tau}\wedge\theta_{\tau\alpha}=
-\frac{1}{2}\sum_{sl}R^{\bot}_{(2k-1)\alpha sl}\omega_s\wedge\omega_l$ and (\ref{rema1}), and noting $R^{\bot}_{(2k-1)\alpha sl}=0$, we obtain
\begin{equation}\label{remac1}
\left\{
\begin{array}{cc}
\omega_2\wedge\theta_{(2k+1)\alpha}+\omega_1\wedge\theta_{(2k+2)\alpha}=0,~~\alpha\geq 2k+3,\\
-\omega_1\wedge\theta_{(2k+1)\alpha}+\omega_2\wedge\theta_{(2k+2)\alpha}=0,~~\alpha\geq 2k+3.
\end{array}
\right.
\end{equation}
From (\ref{remac1}), we can assume that
\[
\left\{
\begin{array}{cc}
\theta_{(2k+1)\alpha}=a^{\alpha}_1\omega_1+a^{\alpha}_2\omega_2,~~\alpha\geq 2k+3,\\
\theta_{(2k+2)\alpha}=-a^{\alpha}_2\omega_1+a^{\alpha}_1\omega_2,~~\alpha\geq 2k+3.
\end{array}
\right.
\]
Furthermore, we can choose a basis $\{\xi_{2k+3},\cdots,\xi_p\}$ in $Span\{\xi_{2k+3},\cdots,\xi_p\}$ such that
\begin{equation}\label{norc1}
\left\{
\begin{array}{cc}
\theta_{(2k+1)(2k+3)}=a\omega_1+b\omega_2,\\
\theta_{(2k+2)(2k+3)}=-b\omega_1+a\omega_2,
\end{array}
\right.
\end{equation}
\begin{equation}\label{norc2}
\left\{
\begin{array}{cc}
\theta_{(2k+1)(2k+4)}=c\omega_1,\\
\theta_{(2k+2)(2k+4)}=c\omega_2,\\
\end{array}
\right.
\left\{
\begin{array}{cc}
\theta_{(2k+1)\alpha}=0,~~\alpha\geq 2k+5,\\
\theta_{(2k+2)\alpha}=0,~~\alpha\geq 2k+5.
\end{array}
\right.
\end{equation}
Using $d\theta_{(2k+1)(2k+3)}-\sum_{\tau}\theta_{(2k+1)\tau}\wedge\theta_{\tau(2k+3)}=
-\frac{1}{2}\sum_{sl}R^{\bot}_{(2k+1)(2k+3) sl}\omega_s\wedge\omega_l$ and (\ref{norc1}), and noting $R^{\bot}_{(2k+1)(2k+3) sl}=0$, we obtain
\begin{equation}\label{noy}
a\Theta\wedge\omega_2-b\Theta\wedge\omega_1+c\theta_{(2k+3)(2k+4)}\wedge\omega_1=0, ~~\Theta
=\theta_{(2k+1)(2k+2)}-\omega_{12}-\frac{B^1_{11,3}}{\mu}\omega_3.
\end{equation}
Similarly, we have
\begin{equation}\label{noy1}
\begin{split}
&b\Theta\wedge\omega_2+a\Theta\wedge\omega_1-c\theta_{(2k+3)(2k+4)}\wedge\omega_2=0,\\
&c\Theta\wedge\omega_1-(-b\omega_1+a\omega_2)\wedge\theta_{(2k+3)(2k+4)}=0,\\
&c\Theta\wedge\omega_2+(a\omega_1+b\omega_2)\wedge\theta_{(2k+3)(2k+4)}=0.
\end{split}
\end{equation}
From (\ref{noy}) and (\ref{noy1}), we eliminate the terms $\omega_1\wedge\theta_{(2k+3)(2k+4)}$ and $\omega_2\wedge\theta_{(2k+3)(2k+4)}$, and we get
\begin{equation}\label{no321}
\begin{split}
2ab\Theta\wedge\omega_2+[a^2-b^2+c^2]\Theta\wedge\omega_1=0,\\
[a^2-b^2+c^2]\Theta\wedge\omega_2-2ab\Theta\wedge\omega_1=0.
\end{split}
\end{equation}
Like Claim~1, we can prove that $$D=[2ab]^2+[a^2-b^2+c^2]^2=0.$$
If $b=0$. Since $D=0$, then $a=c=0$, which implies that the codimension of $f$ can reduce to $2k+2$ and $a_k=0$.

If $b\neq0$. Since $D=0$, then $a=0$ and $b^2=c^2$. We can assume $b=c=a_k$, thus
\[
\left\{
\begin{array}{cc}
\theta_{(2k+1)(2k+3)}=a_k\omega_2,\\
\theta_{(2k+2)(2k+3)}=-a_k\omega_1,\\
\end{array}
\right.
\left\{
\begin{array}{cc}
\theta_{(2k+1)(2k+4)}=a_k\omega_1,\\
\theta_{(2k+2)(2k+4)}=a_k\omega_2,\\
\end{array}
\right.
\left\{
\begin{array}{cc}
\theta_{(2k+1)\alpha}=0,~~\alpha\geq 2k+5,\\
\theta_{(2k+2)\alpha}=0,~~\alpha\geq 2k+5,\\
\end{array}
\right.
\]
Since $a=0$ and $b=c$, From (\ref{noy}) we have
\[
\theta_{(2k+3)(2k+4)}=\Theta=\theta_{(2k+1)(2k+2)}-\omega_{12}-\frac{B^1_{11,3}}{\mu}\omega_3.
\]
Thus we finish the proof to Proposition~\ref{3d}.
\end{proof}

\section{The reduction to $2$ and $3$ dimensional case}

To obtain the classification of M\"obius homogeneous Wintgen ideal submanifolds, a crucial observation is that they can be reduced to $2$ or $3$ dimensional case via the construction of cones in Section~4.
The reader may compare this to our general results for Wintgen ideal submanifolds with a low-dimensional integrable distribution \cite{LiTZ2}, and the reduction theorem for hypersurfaces in M\"obius geometry \cite{LiTZ1}.

\begin{lemma}\label{integ1}
Let $f:M^m\longrightarrow \mathbb{R}^{m+p} (m\geq3)$ be a M\"{o}bius homogeneous Wintgen ideal submanifold.
Then $A_1>0.$
\end{lemma}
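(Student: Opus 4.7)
The plan is to split the proof according to whether $B^1_{11,3}$ vanishes, using Proposition~\ref{blasch}. In the case $B^1_{11,3}\neq 0$, Proposition~\ref{blasch} already furnishes the explicit formula $A_1=(B^1_{11,3})^2/(2\mu^2)$, which is manifestly positive; so the real work lies in the complementary case $B^1_{11,3}=0$.

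In that case Proposition~\ref{blasch} gives the diagonal Blaschke form $\mathbf{A}=\mathrm{diag}(A_1,A_1,-A_1,\ldots,-A_1)$, and the structure equations \eqref{conne} reduce to $\theta_{12}=-2\omega_{12}$, $\omega_{1a}=\omega_{2a}=0$ for $a\geq 3$. Substituting \eqref{eq-equality} and this $\mathbf{A}$ into the Gauss equation \eqref{equa4} at indices $1,2$ gives
\[
R_{1212}=2A_1-2\mu^2,
\]
so the claim $A_1>0$ will follow once I show $R_{1212}\geq 0$ (which in fact yields $A_1\geq \mu^2>0$). If $\omega_{12}=0$ this is immediate, since then $d\omega_{12}=0$ forces $R_{1212}=0$.

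If $\omega_{12}\neq 0$ I will invoke Proposition~\ref{integ0} to put the normal connection into the canonical block form with constants $a_0,a_1,\ldots$. Iterating the recursion $\theta_{(2k+1)(2k+2)}-\omega_{12}=\theta_{(2k+3)(2k+4)}$ together with $\theta_{12}=-2\omega_{12}$ yields $\theta_{(2k+3)(2k+4)}=-(k+3)\omega_{12}$. I then compute $d\theta_{(2k+3)(2k+4)}$ two ways. On one side, $d\omega_{12}=-\tfrac12 R_{1212}\,\omega_1\wedge\omega_2$ (the only nonzero $R_{12kl}$ is $R_{1212}$, because $\omega_{1a}=\omega_{2a}=0$ for $a\geq 3$). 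On the other side, the normal Ricci identity applies with $R^{\perp}_{\alpha\beta,ij}=0$ for $\alpha,\beta\geq 3$ (since $B^{\alpha}\equiv 0$ there), and the block structure of Proposition~\ref{integ0} forces $\theta_{(2k+3)\tau}\wedge\theta_{\tau(2k+4)}$ to vanish except when $\tau\in\{2k+1,2k+2,2k+5,2k+6\}$. A direct accounting of those four contributions gives the key identity
\[
\tfrac{k+3}{2}R_{1212}=2(a_k^2-a_{k+1}^2).
\]

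Since $f$ has finite codimension the sequence $\{a_k\}$ must terminate at some largest $K$ beyond which the frame vectors $\xi_{2K+5},\xi_{2K+6}$ are unavailable. For that $K$ the above identity collapses to $R_{1212}=4a_K^2/(K+3)\geq 0$; in the degenerate sub-case where every $a_k$ already vanishes, the same type of computation applied directly to $d\theta_{12}$ (using $\theta_{1\alpha}=\theta_{2\alpha}=0$ for $\alpha\geq 3$ from \eqref{bb1}) gives $R_{1212}=2\mu^2>0$. Either way $R_{1212}\geq 0$, which is what we need. The main obstacle is the careful sign and index bookkeeping in the computation of $\sum_{\tau}\theta_{(2k+3)\tau}\wedge\theta_{\tau(2k+4)}$, and ensuring that the cross-terms outside the four-index window indeed vanish; this is exactly what the off-block relations $\theta_{(2k+1)\alpha}=\theta_{(2k+2)\alpha}=0$ for $\alpha\geq 2k+5$ supplied by Proposition~\ref{integ0} are designed to guarantee.
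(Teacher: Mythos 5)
Your proof is correct and follows essentially the same route as the paper: the case $B^1_{11,3}\neq0$ via Proposition~\ref{blasch}, the case $B^1_{11,3}=0,\ \omega_{12}=0$ directly from $R_{1212}=0$, and the case $\omega_{12}\neq0$ via the canonical normal connection of Proposition~\ref{integ0}, the iterated relation $\theta_{(2k+3)(2k+4)}=-(k+3)\omega_{12}$, and the two computations of $d\theta_{(2k+3)(2k+4)}$ yielding $R_{1212}\geq 0$ and hence $A_1\geq\mu^2>0$. If anything you are more careful than the paper, which writes the identity only for the terminal block and omits the degenerate sub-case where all $a_k$ vanish; the sole blemish is a harmless factor of $2$ in your conventions (with the paper's structure equation $d\omega_{12}=-R_{1212}\,\omega_1\wedge\omega_2$, so the key identity reads $(k+3)R_{1212}=2\left(a_k^2-a_{k+1}^2\right)$), which does not affect the positivity conclusion.
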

\begin{proof}
If $B^1_{11,3}\neq0$, from Proposition~\ref{blasch}, $A_1=\frac{(B^1_{11,3})^2}{2\mu^2}>0$. Thus we need only to consider the case when $B^1_{11,3}=0$.

If $B^1_{11,3}=0,\omega_{12}=0,$ then $R_{1212}=0$, i.e.,$-2\mu^2+2A_1=0$.
Thus $A_1=\mu^2>0.$

Consider the case $B^1_{11,3}=0, \omega_{12}\neq0$. From Proposition~\ref{integ0}, we have
\begin{equation}\label{nork}
\theta_{12}-(k+1)\omega_{12}=\theta_{(2k+3)(2k+4)}.
\end{equation}
 From (\ref{conne}), we have
\begin{equation}\label{conne1}
2\omega_{12}+\theta_{12}=0,~~~~~
\omega_{1i}=0, i\geq 3,~~~~~~
\omega_{2i}=0, i\geq 3.
\end{equation}
Thus, we have
\begin{equation}\label{nork1}
\theta_{(2k+3)(2k+4)}=-(k+3)\omega_{12}.
\end{equation}
Using Proposition (\ref{integ0}) and (\ref{nork1}), we have
\begin{equation*}
d\theta_{(2k+3)(2k+4)}=\sum_{\tau}\theta_{(2k+3)\tau}\wedge\theta_{\tau(2k+4)}=2a_k^2\omega_1\wedge\omega_2
=(k+3)R_{1212}\omega_1\wedge\omega_2,
\end{equation*}
which implies
\begin{equation}\label{nork34}
2a_k^2=(k+2)R_{1212}=(k+3)(-2\mu^2+2A_1).
\end{equation}
 Thus $A_1>0.$
\end{proof}
\begin{PROPOSITION}\label{integ11}
Let $f:M^m\longrightarrow \mathbb{R}^{m+p} (m\geq3)$ be a M\"{o}bius homogeneous Wintgen ideal submanifold.
If $B^1_{11,3}=0$, then locally $f$ is M\"{o}bius equivalent to a cone over a surface $u:M^2\to \mathbb{S}^{2+p}$.
\end{PROPOSITION}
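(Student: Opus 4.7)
The plan is to exhibit a canonical orthogonal splitting $\mathbb{R}^{m+p+2}_1 = V\oplus V'$ of the ambient Lorentz space, where $V$ is a fixed $(3+p)$-dimensional Euclidean subspace containing a round sphere $S^{2+p}$ and $V'$ is a fixed $(m-1)$-dimensional Lorentz subspace containing a hyperboloid $\mathbb{H}^{m-2}$, and then observe that in these coordinates the canonical lift $Y$ factors as a product so that Proposition~\ref{redu} applies directly.

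First I would collect the structural data. Since $B^1_{11,3}=0$ and $\Phi=0$ by Proposition~\ref{c01}, the connection forms reduce to $2\omega_{12}+\theta_{12}=0$ and $\omega_{1a}=\omega_{2a}=0$ for $a\geq 3$, so both distributions $\mathbb{D}=\mathrm{span}\{E_1,E_2\}$ and $\mathbb{D}^{\perp}=\mathrm{span}\{E_3,\dots,E_m\}$ are integrable and totally geodesic in the M\"obius metric. By Proposition~\ref{blasch}, $(A_{ij})=\mathrm{diag}(A_1,A_1,-A_1,\dots,-A_1)$, and by Lemma~\ref{integ1}, $A_1>0$. Then I would introduce the rescaled vectors
\[
\tilde Y=\frac{1}{\sqrt{2A_1}}(A_1 Y+N),\qquad \tilde N=\frac{1}{\sqrt{2A_1}}(A_1 Y-N),
\]
and compute using the structure equations (noting $\Phi=0$) that
\[
d\tilde Y=\sqrt{2A_1}\,(\omega_1 Y_1+\omega_2 Y_2),\qquad d\tilde N=\sqrt{2A_1}\sum_{a\geq 3}\omega_a Y_a .
\]
Hence $\tilde Y$ is constant along $\mathbb{D}^{\perp}$-leaves and $\tilde N$ is constant along $\mathbb{D}$-leaves; moreover $\langle\tilde Y,\tilde Y\rangle=1$, $\langle\tilde N,\tilde N\rangle=-1$, $\langle\tilde Y,\tilde N\rangle=0$, and $Y=(\tilde Y+\tilde N)/\sqrt{2A_1}$.

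The main technical step is to set
\[
V=\mathrm{span}\{\tilde Y,Y_1,Y_2,\xi_1,\dots,\xi_p\},\qquad V'=\mathrm{span}\{\tilde N,Y_3,\dots,Y_m\},
\]
and prove these are parallel, hence constant, subspaces of $\mathbb{R}^{m+p+2}_1$. At each point they are orthogonal of dimensions $3+p$ and $m-1$ with Euclidean and Lorentz signatures respectively. To see invariance of $V$, along $\mathbb{D}$ one uses the block structure $B^{\alpha}_{ij}=0$ for $\alpha\geq 3$ from \eqref{eq-equality} to keep $dY_1,dY_2,d\xi_1,d\xi_2$ inside $V$, while $d\xi_{\alpha}$ for $\alpha\geq 3$ reduces to $\sum_{\beta}\theta_{\alpha\beta}\xi_{\beta}$, all of whose terms lie in $V$. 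Along $\mathbb{D}^{\perp}$, the identities $B^{\alpha}_{aj}=0$ and $A_{ia}=0$ ensure $dY_a$ and $d\tilde N$ stay in $V'$, and the crucial observation $\theta_{1\alpha}(E_a)=\theta_{2\alpha}(E_a)=0$ for $\alpha,a\geq 3$, read off from \eqref{bb1} because those one-forms only involve $\omega_1,\omega_2$, prevents $\xi_1,\xi_2$ from leaking into the $\xi_{\alpha}$ ($\alpha\geq 3$) directions. The analogous verification for $V'$ is symmetric. This bookkeeping is the heart of the proof.

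Once $V$ and $V'$ are established as fixed subspaces with $V\oplus V'=\mathbb{R}^{m+p+2}_1$, the map $\tilde Y$ factors through a surface $u:M^2\to S^{2+p}\subset V$ and the map $\tilde N$ factors through a local diffeomorphism $h:L^{m-2}\to\mathbb{H}^{m-2}\subset V'$ (full rank $m-2$ on $\mathbb{D}^{\perp}$ since $Y_3,\dots,Y_m$ are independent). Writing
\[
Y=\frac{1}{\sqrt{2A_1}}\,(h,u),
\]
this is precisely the form required by Proposition~\ref{redu} with constant conformal factor $\rho_0=1/\sqrt{2A_1}$, so $f$ is locally M\"obius equivalent to a cone over the surface $u:M^2\to S^{2+p}$. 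The anticipated main obstacle is the invariance verification for $V$ along $\mathbb{D}^{\perp}$, where several blocks of the second fundamental form and normal connection must cooperate simultaneously; the key enabling facts are the vanishing of $B^{\alpha}_{aj}$ for $\alpha\geq 3$ and the precise form \eqref{bb1} of $\theta_{1\alpha},\theta_{2\alpha}$ after imposing $B^1_{11,3}=0$.
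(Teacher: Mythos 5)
Your proposal is correct and follows essentially the same route as the paper: the paper likewise shows that $V=\mathrm{span}\{A_1Y+N,\,Y_1,Y_2,\xi_1,\dots,\xi_p\}$ and its orthogonal complement $\mathrm{span}\{A_1Y-N,\,Y_3,\dots,Y_m\}$ are parallel, hence fixed, subspaces of $\mathbb{R}^{m+p+2}_1$, normalizes $A_1Y\pm N$ to obtain a surface in $\mathbb{S}^{2+p}$ and the identity map of $\mathbb{H}^{m-2}$, and then invokes Proposition~\ref{redu}. The only cosmetic differences are your normalization $1/\sqrt{2A_1}$ (which indeed gives unit vectors, slightly cleaner than the paper's $1/\sqrt{A_1}$) and your ``crucial observation'' that $\theta_{1\alpha}(E_a)=\theta_{2\alpha}(E_a)=0$, which is true but superfluous here, since every $\xi_\alpha$ already lies in $V$ and so the terms $\theta_{1\alpha}\xi_\alpha$, $\theta_{2\alpha}\xi_\alpha$ cannot leave $V$ in any direction.
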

\begin{proof} From (\ref{conne}), we have
\begin{equation}\label{conne1}
2\omega_{12}+\theta_{12}=0,~~~~~
\omega_{1i}=0, i\geq 3,~~~~~~
\omega_{2i}=0, i\geq 3.
\end{equation}
Since $d\omega_a\equiv0, mod\{\omega_3,\cdots,\omega_m\}, a\geq 3$, $\mathbb{D}=span\{E_1,E_2\}$ is integrable.
Using (\ref{conne1}) and Proposition~\ref{blasch}, we have
\begin{equation}\label{frame1}
\begin{split}
d\xi_1=-\mu\omega_1Y_2-\mu\omega_2Y_1+\sum_{\alpha=1}\theta_{1\alpha}\xi_\alpha,\\
d\xi_2=-\mu\omega_1Y_1+\mu\omega_2Y_2+\sum_{\alpha=1}\theta_{2\alpha}\xi_\alpha,\\
d\xi_{\alpha}=-\theta_{1\alpha}\xi_1-\theta_{2\alpha}\xi_2+\sum_{\beta}\theta_{\alpha\beta}\xi_{\beta}.
\end{split}
\end{equation}
\begin{equation}\label{frame2}
\begin{split}
&dY_1=\omega_{12}Y_2
-\omega_1\left(A_1Y+N\right)
+\mu\omega_2\xi_1+\mu\omega_1\xi_2,\\
&dY_2=-\omega_{12}Y_1
-\omega_2\left(A_1Y+N\right)
+\mu\omega_1\xi_1-\mu\omega_2\xi_2,\\
&d\left(A_1Y+N\right)=2A_1[\omega_1Y_1+\omega_2Y_2]
\end{split}
\end{equation}
From (\ref{frame1}) and (\ref{frame2}), we know that the subspace
$$V=span\{(A_1Y+N),Y_1,Y_2,\xi_1,\xi_2,\cdots,\xi_p\}$$
is parallel along $M^m$. The orthogonal complement $V^{\perp}$ also is parallel along $M^m$. In fact,
$$V^{\perp}=span\{(A_1Y-N),Y_3,\cdots,Y_m\}.$$
 Using (\ref{conne1}), we can obtain
\begin{equation}\label{t1}
d(A_1Y-N)=2A_1\sum_{a\geq3}\omega_aY_a, ~~~dY_a=\omega_a(A_1Y-N)+\sum_{b\geq3}\omega_{ab}Y_b,~~~~a\geq3.
\end{equation}

Since $d\omega_1=\omega_{12}\wedge\omega_1, d\omega_2=-\omega_{12}\wedge_1$, the distribution $\mathbb{D}^{\perp}=span\{E_3,\cdots,E_m\}$ also is integrable. From (\ref{frame1}) and  (\ref{frame2}),
we know that the mean curvature spheres $\xi_1,\xi_2$ induce $2$-dimensional submanifolds in the de sitter space $\mathbb{S}_1^{m+p+1}$
$$\xi_1,\xi_2:M^2=M^m/F\longrightarrow \mathbb{S}_1^{m+p+1},$$
where fibers $F$ are integral submanifolds of distribution $\mathbb{D}^{\perp}$. In other words, $\xi_1,\xi_2$ form $2$-parameter family of $(m+p-1)$-spheres enveloped
by $f:M^m\longrightarrow \mathbb{R}^{m+p}$.

Since $\langle (A_1Y+N),(A_1Y+N)\rangle =2A_1>0$,  $V$ is a fixed space-like subspace, $V^{\bot}$ is a fixed Lorentz subspace in $\mathbb{R}^{m+p+2}_1$.
We can assume that $V=\mathbb{R}^{3+p}, ~~V^{\bot}=\mathbb{R}^{m-1}_1$. From (\ref{frame1}) and (\ref{frame2}), we know
$$u=\frac{1}{\sqrt{A_1}}(A_1Y+N):M^2\to \mathbb{S}^{2+p}.$$
On the other hand, the equation (\ref{t1}) implies that
$$\phi=\frac{1}{\sqrt{A_1}}(A_1Y-N):\mathbb{H}^{m-2}\to \mathbb{R}^{m-1}_1$$
is the standard embedding of the hyperbolic space $\mathbb{H}^{m-2}$ in $\mathbb{R}^{m-1}_1$.
Then
\begin{equation*}
Y=2\sqrt{A_1}(u,\phi):M^2\times \mathbb{H}^{m-2}\to \mathbb{S}^{2+p}\times \mathbb{H}^{m-2}\subset \mathbb{R}^{m+p+2}_1,
\end{equation*}
where $\phi:\mathbb{H}^{m-2}\to \mathbb{H}^{m-2}$ is the identity map.
From Proposition~\ref{redu}, we know that $f$ is a cone over $u:M^2\to \mathbb{S}^{2+p}.$
We complete the proof of Proposition \ref{integ11}.
\end{proof}
\begin{PROPOSITION}\label{integ2}
Let $f:M^m\longrightarrow \mathbb{R}^{m+p} (m\geq4)$ be a M\"{o}bius homogeneous Wintgen ideal submanifold.
If $B^1_{11,3}\neq0$,  then locally $f$ is M\"{o}bius equivalent to
a cone over a three dimensional M\"{o}bius homogeneous Wintgen ideal submanifold in $\mathbb{S}^{3+p}$.
\end{PROPOSITION}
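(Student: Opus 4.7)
The plan is to parallel the two-dimensional reduction of Proposition~\ref{integ11}, but now the horizontal distribution must be three-dimensional because $B^1_{11,3}\neq 0$ couples $E_3$ to the distinguished plane $\{E_1,E_2\}$. By Proposition~\ref{c01} we have $\Phi=0$, so every $C^{\alpha}_i$ vanishes; combined with (\ref{mcof2})--(\ref{mcof3}) and the fact that the well-defined function $B^1_{11,3}$ must be a nonzero constant under M\"obius homogeneity, all off-diagonal Blaschke entries vanish, and Proposition~\ref{blasch} then gives $(A_{ij})=\mathrm{diag}(A_1,A_1,A_1,-A_1,\dots,-A_1)$ with $A_1>0$ by Lemma~\ref{integ1}. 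Moreover (\ref{conne}) and Proposition~\ref{blasch} yield $\omega_{1a}=\omega_{2a}=\omega_{3a}=0$ for all $a\geq 4$, so both $\mathbb{D}=\mathrm{span}\{E_1,E_2,E_3\}$ and $\mathbb{D}^{\perp}=\mathrm{span}\{E_4,\dots,E_m\}$ are integrable.

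Next I would introduce the splitting
\[
V=\mathrm{span}\{A_1Y+N,\,Y_1,Y_2,Y_3,\,\xi_1,\dots,\xi_p\},\qquad V^{\perp}=\mathrm{span}\{A_1Y-N,\,Y_4,\dots,Y_m\},
\]
and verify directly from the structure equations of Section~2 that $V^{\perp}$ is closed under exterior differentiation; concretely I expect
\[
d(A_1Y-N)=2A_1\sum_{a\geq 4}\omega_a Y_a,\qquad dY_a=\omega_a(A_1Y-N)+\sum_{b\geq 4}\omega_{ab}Y_b\quad(a\geq 4),
\]
so $V^{\perp}$ is a fixed $(m-2)$-dimensional Lorentz subspace of $\mathbb{R}^{m+p+2}_1$ and $V$ a fixed $(p+4)$-dimensional Euclidean subspace. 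Setting
\[
u=\frac{1}{\sqrt{2A_1}}(A_1Y+N),\qquad \phi=\frac{1}{\sqrt{2A_1}}(A_1Y-N),
\]
one computes $\langle u,u\rangle=1$, $\langle\phi,\phi\rangle=-1$, together with $du=\sqrt{2A_1}\sum_{i\leq 3}\omega_i Y_i$ and $d\phi=\sqrt{2A_1}\sum_{a\geq 4}\omega_a Y_a$. Hence $u$ descends to a map $M^3:=M^m/\mathcal{F}\to \mathbb{S}^{3+p}\subset V$, where $\mathcal{F}$ is the foliation by leaves of $\mathbb{D}^{\perp}$, while $\phi$ restricts to each leaf as the standard embedding $\mathbb{H}^{m-3}\hookrightarrow V^{\perp}\cong \mathbb{R}^{m-2}_1$. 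The M\"obius position vector factors as $Y=(\phi,u)/\sqrt{2A_1}$, so Proposition~\ref{redu} identifies $f$ as a cone over $u$.

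It remains to check that $u$ is a three-dimensional M\"obius homogeneous Wintgen ideal submanifold in $\mathbb{S}^{3+p}$. By Proposition~\ref{32}, $u$ is automatically a minimal Wintgen ideal submanifold of $\mathbb{S}^{3+p}$. For the homogeneity, any M\"obius symmetry of $f$ acts on $\mathbb{R}^{m+p+2}_1$ as a Lorentz transformation preserving the canonically attached subspace $V$, and hence descends to a M\"obius transformation of $\mathbb{S}^{3+p}$ preserving $u(M^3)$; transitivity on $M^3$ then follows from transitivity on $M^m$ modulo $\mathcal{F}$. The main technical step is the verification of the parallelism of $V^{\perp}$, which calls upon all the structural data collected in Sections~3, 5 and~6; the main conceptual point is that the correct splitting dimension here is three rather than two, forced by $B^1_{11,3}\neq 0$, which destroys the integrability of $\mathrm{span}\{E_1,E_2\}$ that was available in Proposition~\ref{integ11}.
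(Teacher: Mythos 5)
Your proposal is correct and follows essentially the same route as the paper: the same connection identities from (\ref{conne}) and Proposition~\ref{blasch}, the same parallel splitting $V=\mathrm{span}\{A_1Y+N,Y_1,Y_2,Y_3,\xi_1,\dots,\xi_p\}$ with $V^{\perp}=\mathrm{span}\{A_1Y-N,Y_4,\dots,Y_m\}$, and the same identification of $f$ as a cone via Proposition~\ref{redu}. In fact you supply slightly more detail than the paper does (the correct normalization by $\sqrt{2A_1}$ and the explicit argument that M\"obius symmetries preserve $V$ and hence descend to $u$, which the paper dismisses with ``clearly'').
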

\begin{proof}
From (\ref{conne}) and Proposition~\ref{blasch}, we have
\begin{equation}\label{conne12}
\begin{split}
&2\omega_{12}+\theta_{12}=\frac{-B^1_{11,3}}{\mu}\omega_3,~~
\omega_{13}=\frac{-B^1_{11,3}}{\mu}\omega_2,~~\omega_{23}=\frac{B^1_{11,3}}{\mu}\omega_1,\\
&\omega_{1i}=0, i\geq 4,~~~~~~
\omega_{2i}=0, i\geq 4,~~~~~~\omega_{3i}=0,  i\geq 4.
\end{split}
\end{equation}
Since $d\omega_a\equiv0, mod\{\omega_4,\cdots,\omega_m\}, a\geq 4,$ $\mathbb{D}=span\{E_1,E_2,E_2\}$ is integrable,
Using (\ref{conne12}) and Proposition~\ref{blasch}, we have
\begin{equation}\label{frame12}
\begin{split}
d\xi_1=-\mu\omega_1Y_2-\mu\omega_2Y_1+\sum_{\alpha=1}\theta_{1\alpha}\xi_{\alpha},\\
d\xi_2=-\mu\omega_1Y_1+\mu\omega_2Y_2+\sum_{\alpha=1}\theta_{2\alpha}\xi_{\alpha},\\
d\xi_{\alpha}=-\theta_{1\alpha}\xi_1-\theta_{2\alpha}\xi_2+\sum_{\beta}\theta_{\alpha\beta}\xi_{\beta}.
\end{split}
\end{equation}
\begin{equation}\label{frame22}
\begin{split}
&dY_1=-\omega_1\left(A_1Y+N\right)+\omega_{12}Y_2+\omega_{13}Y_3
+\mu\omega_2\xi_1+\mu\omega_1\xi_2,\\
&dY_2=-\omega_2\left(A_1Y+N\right)-\omega_{12}Y_1+\omega_{23}Y_3
+\mu\omega_1\xi_1-\mu\omega_2\xi_2,\\
&dY_3=-\omega_3\left(A_1Y+N\right)-\omega_{13}Y_1-\omega_{23}Y_2,\\
&d\left(A_1Y+N\right)=2A_1[\omega_1Y_1+\omega_2Y_2+\omega_3Y_3].
\end{split}
\end{equation}
From (\ref{frame12}) and (\ref{frame22}), we know that the subspace
$$V=span\{(A_1Y+N),Y_1,Y_2,Y_3,\xi_1,\xi_2,\cdots,\xi_p\}$$
is parallel along $M^m$. The orthogonal complement $V^{\perp}$ also is parallel along $M^m$. In fact,
$$V^{\perp}=span\{(A_1Y-N),Y_4,\cdots,Y_m\}.$$
 Using (\ref{conne1}), we can obtain
\begin{equation}\label{t}
d(A_1Y-N)=2A_1\sum_{a\geq4}\omega_aY_a,~~~dY_a=\omega_a(A_1Y-N)+\sum_{b\geq 4}\omega_{ab}Y_b, ~~a\geq 4.
\end{equation}
Since $\langle (A_1Y+N),(A_1Y+N)\rangle =2A_1>0$,  $V$ is a fixed space-like subspace.
Like as the proof of Proposition~ \ref{integ11}, we can prove that $f$ is M\"{o}bius equivalent to
a cone over a three dimensional Wintgen ideal submanifold $u:M^3\to \mathbb{S}^{3+p}$. Since $f$ is M\"{o}bius homogeneous,
clearly $u:M^3\to \mathbb{S}^{3+p}$ is also M\"{o}bius homogeneous.
\end{proof}

\section{Proof of the Main theorem}

\begin{PROPOSITION}\label{the2}
Let $f:M^m\longrightarrow \mathbb{R}^{m+p} (m\geq3)$ be a M\"{o}bius homogeneous Wintgen ideal submanifold.
If $B^1_{11,3}=0$,  then locally $f$ is M\"{o}bius equivalent to
\begin{description}
\item (i) a cone over a Veronese surface in ${\mathbb S}^{2k}$,
\item (ii) a cone over a flat minimal surface in ${\mathbb S}^{2k+1}$.
\end{description}
\end{PROPOSITION}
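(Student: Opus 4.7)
The plan is to use the reduction machinery already established, which in fact does most of the work, and then appeal to a classical classification at the very end. By Proposition~\ref{integ11}, the hypothesis $B^1_{11,3}=0$ already tells us that $f$ is M\"obius equivalent to the cone over some surface $u:M^2\to \mathbb{S}^{2+p}$. So the remaining task is to classify which $u$ can arise. Combining Proposition~\ref{32} with Proposition~\ref{c01} and Proposition~\ref{redu1}, I would first note that $u$ must be minimal, Wintgen ideal, and have vanishing M\"obius form $\Phi_u=0$. In particular, the Wintgen ideal condition for surfaces in $\mathbb{S}^{2+p}$ is equivalent (as recalled in the introduction) to the curvature ellipse being a circle at every point, i.e.\ to the Hopf differential being isotropic.

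Next I would descend the M\"obius homogeneity from $f$ to $u$. The key geometric fact is that the cone construction is canonical: the integrable distribution $\mathbb{D}=\mathrm{span}\{E_1,E_2\}$ and its orthogonal complement are M\"obius invariants of $f$, and under the decomposition $Y=\rho_0(\mathrm{id},u):\mathbb{H}^{m-2}\times M^2\to \mathbb{H}^{m-2}\times \mathbb{S}^{2+p}$ from \eqref{mop2}, any M\"obius transformation of $\mathbb{R}^{m+p}$ preserving $f(M^m)$ preserves the splitting and projects to a M\"obius transformation of $\mathbb{S}^{2+p}$ preserving $u(M^2)$. Applied to a transitive family mapping points of $M^m$ to each other, this gives a transitive action of a subgroup of the M\"obius group of $\mathbb{S}^{2+p}$ on $u(M^2)$.

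Once $u$ is a minimal, M\"obius homogeneous, Wintgen ideal surface in $\mathbb{S}^{2+p}$ with $\Phi_u=0$, it has constant Gauss curvature $K$ and constant $|II_u|^2$. The classical theory of homogeneous minimal surfaces in spheres (Calabi, Chern, Bryant; cf.\ \cite{br1,li1} as referenced in the introduction) then pins down the two possibilities: if $K>0$, isotropicity forces $u$ to be totally isotropic of maximal order, i.e.\ one of the Veronese surfaces $S^2\to \mathbb{S}^{2k}$ coming from the irreducible $SO(3)$ representations, which gives case (i); if $K=0$, the flat minimal surface comes from a $2$--dimensional abelian subgroup of $SO(2k+2)$ as in Example~\ref{ex2}, and the Wintgen ideal (isotropy) condition imposes the extra algebraic constraint $\sum \mathrm{e}^{4i\theta_j}r_j^2=0$, yielding case (ii). The possibility $K<0$ is ruled out by homogeneity combined with the minimality and the Wintgen ideal identity, since $u$ would then be non-compact yet the constant length $|II_u|^2=2-2K$ together with $K<0$ contradicts the pinching bounds available for minimal isotropic surfaces in spheres.

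The main obstacle in this plan is the clean verification of the descent step, i.e.\ that M\"obius homogeneity of the cone $f$ really does push down to M\"obius (hence, in view of $\Phi_u=0$ and minimality, isometric up to conformal change) homogeneity of $u$. After that, the classification is essentially an application of known results on homogeneous minimal surfaces of constant curvature in spheres with circular curvature ellipse; the case split into $K>0$ and $K=0$ is forced by the sign of the Gauss curvature and the positivity of $|II_u|^2$.
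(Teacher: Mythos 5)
Your proposal is correct in outline, and its first half coincides with the paper's proof: both invoke Proposition~\ref{integ11} to realize $f$ as a cone over a surface $u:M^2\to\mathbb{S}^{2+p}$, and then Propositions~\ref{32}, \ref{c01} and \ref{redu1} to conclude that $u$ is minimal, Wintgen ideal, with $\Phi_u=0$. Where you genuinely diverge is the last step. The paper simply cites the M\"obius-geometric classification of surfaces with vanishing M\"obius form in $S^n$ (Li--Wang \cite{li}), which requires no homogeneity of $u$ whatsoever; you instead descend the M\"obius homogeneity of the cone to $u$ and then appeal to the metric classification of homogeneous (hence constant curvature) minimal surfaces in spheres. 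Your descent step, which you rightly flag as the main obstacle, is correct and can be made rigorous: the splitting $V\oplus V^{\perp}$ of $\mathbb{R}^{m+p+2}_1$ built in the proof of Proposition~\ref{integ11} is canonically attached to $f$, so any element of the Lorentz group preserving $f(M^m)$ preserves the splitting, restricts on the spacelike factor $V$ to an isometry of $\mathbb{S}^{2+p}$ preserving $u(M^2)$, and maps leaves of $\mathbb{D}^{\perp}$ to leaves, hence acts transitively on the quotient $M^2$. However, this extra work is avoidable even within your metric framework: for a minimal Wintgen ideal surface in the sphere, formula \eqref{2.23} shows that $\Phi_u=0$ alone forces $\rho$ (hence $|II_u|^2$ and $K$) to be constant, since the trace-free shape operators in the Wintgen ideal frame are invertible on the plane they act on. Finally, your exclusion of $K<0$ should not be attributed to loose ``pinching bounds'': the clean statement is Bryant's theorem \cite{br1} that minimal surfaces of constant negative curvature in $S^n$ do not exist, which together with Calabi's rigidity yields exactly the Veronese case ($K>0$) and the Clifford-type flat case ($K=0$), i.e.\ items (i) and (ii). In short, your route is valid but longer; what the paper's citation of \cite{li} buys is precisely that neither the homogeneity descent nor the separate constant-curvature argument is needed once $\Phi_u=0$ is known.
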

\begin{proof}
From Proposition~\ref{integ11}, we know that $f$ is a cone over $u:M^2\to \mathbb{S}^{2+p}.$ Since the M\"{o}bius form of $f$ vanishes, from Proposition~ \ref{redu1}, we know that
the M\"{o}bius form of the surface $u$ vanishes. The surfaces with vanishing M\"{o}bius form is classified in \cite{li}.
We complete the proof to Proposition~ \ref{the2}.
\end{proof}
If $B^1_{11,3}\neq0$, by Proposition~ \ref{integ2} we need only to consider three dimensional M\"{o}bius homogeneous Wintgen ideal submanifolds in $S^{3+p}$.

\begin{PROPOSITION}\label{integ3}
Let $x:M^3\longrightarrow \mathbb{S}^{3+p}$ be a M\"{o}bius homogeneous Wintgen ideal submanifold.
If $B^1_{11,3}\neq0$, then locally $x$ is M\"{o}bius equivalent to the M\"{o}bius homogeneous Wintgen ideal submanifold given by Example \ref{ex3}.
\end{PROPOSITION}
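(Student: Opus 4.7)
The plan is to show that $x:M^3\to S^{3+p}$ has exactly the same complete set of M\"obius invariants as the Hopf lift described in Example~\ref{ex3}, and then apply the fundamental uniqueness theorem in M\"obius submanifold geometry from \cite{CPWang}. Under the frames supplied by Propositions~\ref{blasch} and \ref{3d}, the M\"obius metric $g$, the second fundamental form $\mathbf{B}$ in canonical form \eqref{eq-equality}, the Blaschke tensor $\mathbf{A}=A_1\,\mathrm{diag}(1,1,1)$ with $A_1=(B^1_{11,3})^2/(2\mu^2)$, and the normal connection $\{\theta_{\alpha\beta}\}$ are all determined by the single scalar $\beta\triangleq B^1_{11,3}/\mu$ and the sequence of constants $\{a_k\}$ from Proposition~\ref{3d}. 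By M\"obius homogeneity, $\beta$ is a constant.

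Next I would use the integrability conditions to pin these constants down. Applying $d\theta_{(2k+3)(2k+4)}-\sum_{\tau}\theta_{(2k+3)\tau}\wedge\theta_{\tau(2k+4)}=-\tfrac12\sum R^\perp_{(2k+3)(2k+4)\,rs}\,\omega_r\wedge\omega_s$ to the recursion $\theta_{(2k+3)(2k+4)}=\theta_{(2k+1)(2k+2)}-\omega_{12}-\beta\omega_3$ of Proposition~\ref{3d}, and noting that $R^\perp_{(2k+3)(2k+4)\,rs}=0$ because $\mathbf{B}^\alpha=0$ for $\alpha\geq 3$, one obtains the recursive identity $2a_k^2=P_k(\beta,\mu)$ for an explicit polynomial $P_k$ (the base case $k=0$ being the computation already carried out in the proof of Proposition~\ref{c01}, using the Gauss equation \eqref{equa4} and $R_{1212}$ from \eqref{r1212}). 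Since $p$ is finite, the chain must terminate at some index $k=K-2$ with $a_{K-2}=0$, providing the boundary equation $P_{K-2}(\beta,\mu)=0$. This forces $\beta$ to take a discrete value (depending on $K$) and simultaneously fixes the codimension $p=2K-2$ and every $a_k$ up to a sign.

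Finally I would verify that the Hopf lift in Example~\ref{ex3}, namely $\pi^{-1}\circ f:\mathbb{C}P^1\to S^{2K-1}$ of the Veronese embedding $f:\mathbb{C}P^1\to\mathbb{C}P^{K-1}$, realizes exactly the same system. The Veronese map is totally isotropic, so through the Hopf fibration its normal bundle decomposes into a flag of oriented complex lines, which on $S^{2K-1}$ translates into precisely the pair-of-$2$-planes decomposition of Proposition~\ref{3d}. Using the $SU(2)$-homogeneity of the Veronese construction and the M\"obius-invariant formulas of Section~2, a direct computation produces the same canonical values of $\beta$ and $\{a_k\}$, and the codimension matches $2K-2$. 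By the fundamental theorem of M\"obius submanifold geometry \cite{CPWang}, two non-umbilic submanifolds sharing the M\"obius metric $g$, the M\"obius second fundamental form $\mathbf{B}$, the Blaschke tensor $\mathbf{A}$, the M\"obius form $\Phi$ and the normal connection $\{\theta_{\alpha\beta}\}$ are locally M\"obius equivalent; hence $x$ is locally M\"obius equivalent to the example of \ref{ex3}. The main obstacle is this final matching step: translating the complex-geometric flag structure of the lifted Veronese map into the real orthonormal frames of Proposition~\ref{3d} and verifying that the numerical values of $\beta$ and $\{a_k\}$ agree with those extracted from the integrability analysis in step two.
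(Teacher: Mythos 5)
Your overall strategy is genuinely different from the paper's, and it contains a real gap at precisely the step you yourself flag as ``the main obstacle.'' The paper does not pin down the constants $\beta=B^1_{11,3}/\mu$ and $\{a_k\}$ and then invoke a uniqueness theorem. Instead, it observes that the full Möbius frame $T=(\eta,Y_3,Y_1,Y_2,\xi_1,\dots,\xi_{2n})$ satisfies $dT=\Theta T$ where $\Theta$ commutes with a fixed block-diagonal matrix $J_0$; hence $J=T^{-1}J_0T$ is \emph{parallel}, giving a complex structure $\mathbf{J}$ on $\mathbb{R}^{2n+4}$. This exhibits $M^3$ as the Hopf lift of a holomorphic curve $[\eta+iY_3]:\overline{M}^2=M^3/\Gamma\to\mathbb{C}P^{n+1}$, and then homogeneity forces $\overline{M}^2$ to have constant curvature, so Calabi's classification identifies it as the Veronese curve. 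In other words, Calabi's theorem supplies exactly the rigidity statement that your approach must prove by hand and does not.

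Concretely, the gap is this: matching the finite list of constants $\beta,\{a_k\}$ between $x$ and the model of Example \ref{ex3} is not sufficient to apply the fundamental uniqueness theorem of \cite{CPWang}. That theorem requires a local diffeomorphism under which the Möbius metric $g$, $\mathbf{B}$, $\mathbf{A}$, $\Phi$ and the normal connection all correspond as \emph{tensor fields}. But in the frame of Propositions \ref{blasch} and \ref{3d} the form $\omega_{12}$ is not expressed in terms of the coframe $\omega_1,\omega_2,\omega_3$ (only the combinations $\theta_{(2k+1)(2k+2)}=-(k+2)\omega_{12}-(k+1)\beta\omega_3$ are pinned down), so the metric itself is determined only implicitly through the structure equations; knowing the constants does not algebraically determine $g$. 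To close this you would need a Cartan-type uniqueness argument — e.g.\ that $\Theta$ takes values in a fixed four-dimensional Lie subalgebra spanned by the coefficient matrices of $\omega_1,\omega_2,\omega_3,\omega_{12}$, so that $T$ parametrizes an orbit of the corresponding subgroup, locally unique up to conjugation in the Möbius group — and this argument is absent from your sketch, as is the explicit computation of $\beta$ and $\{a_k\}$ for the Hopf lift of the Veronese embedding. A secondary inaccuracy: you cite the proof of Proposition \ref{c01} for the base case $2a_0^2=P_0(\beta,\mu)$, but that computation was carried out under the hypothesis $\Phi\neq0$ using the forms $\theta_{(2k+3)(2k+4)}=[(k+3)\frac{C^1_{2,1}}{C^1_1}-(k+2)\frac{C^1_1}{\mu}]\omega_1$ of Lemma \ref{le4}; here $\Phi=0$ and the recursion of Proposition \ref{3d} involves $\omega_{12}$ and $\omega_3$, so the relevant identities come from $d\omega_{12}$ (the Gauss equation for $R_{1212}$) and $d\omega_3=-2\beta\,\omega_1\wedge\omega_2$, not from the formulas you reference. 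The integrability analysis does go through in this modified form and does quantize $\beta$, but it is a different computation than the one you point to.
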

\begin{proof}
Let $\sigma:\mathbb{S}^{3+p}\to \mathbb{R}^{3+p}$ the stereographic projection. From \cite{liu}, we know that the submanifolds $x:M^3\longrightarrow \mathbb{S}^{3+p}$
and $f=\sigma\circ x:M^3\longrightarrow \mathbb{R}^{3+p}$ have the same M\"{o}bius invariants, especially, the normal
connection.
From Proposition~ \ref{3d}, we can assume $f: ~M^3 \rightarrow \mathbb{S}^{2n+3}$ and there exists basis $\{\xi_1, \xi_2, \cdots, \xi_{2n}\}$ such that the normal
connection under this frame has the following forms
\[
\left\{
\begin{array}{cc}
\theta_{(2k+1)(2k+3)}=a_k\omega_2,\\
\theta_{(2k+2)(2k+3)}=-a_k\omega_1,\\
\end{array}
\right.
\left\{
\begin{array}{cc}
\theta_{(2k+1)(2k+4)}=a_k\omega_1,\\
\theta_{(2k+2)(2k+4)}=a_k\omega_2,\\
\end{array}
\right.
\left\{
\begin{array}{cc}
\theta_{(2k+1)\alpha}=0,~2k+5\leq\alpha\leq 2n,\\
\theta_{(2k+2)\alpha}=0,~2k+5\leq\alpha\leq 2n,\\
\end{array}
\right.
\]
\begin{equation*}\theta_{34}=\theta_{12}-\omega_{12}-\frac{B^1_{11,3}}{\mu}\omega_3,~~~
\theta_{(2k+3)(2k+4)}=\theta_{(2k+1)(2k+2)}-\omega_{12}-\frac{B^1_{11,3}}{\mu}\omega_3,
\end{equation*}
where $k=0,1,2,\cdots, n-2$.

From the preceding discussion, we know that when $B^1_{11,3}\neq0$
$$\omega_{3i}=0, i\geq4;~~~(A_{ij})=diag(A_1,A_1,A_1,-A_1,\cdots,-A_1);$$
where $A_1=\frac{(B^1_{11,3})^2}{2\mu^2}.$ Without lost of generality, we assume $\sqrt{2A_1}=\frac{B^1_{11,3}}{\mu}\doteq L.$
In the following, we define $$\eta=\frac{A_1+N}{\sqrt{2A_1}}.$$ In fact we have $\eta: M^3\rightarrow \mathbb{S}^{2n+3},$ this follows from the following structure equations \eqref{Theta1}.

Combining \eqref{frame12} and \eqref{frame22}, with respect to the frame $\{\eta, Y_3, Y_1, Y_2, \xi_1, \xi_2, \cdots,\xi_{2n-1} \xi_{2n}\}$
we can write out the structure equations as follows:
\begin{small}\begin{equation}\label{Theta1}
d\begin{pmatrix}
\eta\\ Y_3\\ Y_1\\ Y_2\\ \xi_1\\ \xi_2\\ \vdots\\ \xi_{2n-1} \\ \xi_{2n}\end{pmatrix}=\Theta
\begin{pmatrix}
\eta\\ Y_3\\ Y_1\\ Y_2\\ \xi_1\\ \xi_2\\ \vdots\\ \xi_{2n-1} \\ \xi_{2n}\end{pmatrix},
\end{equation}\end{small}
where $\Theta=$
\[
\left(\begin{array}{cccccccccc}
0& L\omega_3& L\omega_1& L\omega_2& 0& 0& 0& 0& \cdots& \vec{0}\\
-L\omega_3& 0& -L\omega_2& L\omega_1& 0& 0& 0& 0& \cdots& \vec{0}\\
-L\omega_1& L\omega_2& 0& \omega_{12}& \mu\omega_2& -\mu\omega_1& 0& 0 &\cdots& \vec{0}\\
-L\omega_2& -L\omega_1& -\omega_{12}& 0& \mu\omega_1& \mu\omega_2& 0& 0 & \cdots& \vec{0}\\
0& 0& -\mu\omega_2& -\mu\omega_1& 0& \theta_{12} &a_0\omega_2 &a_0\omega_1 &\cdots &\vec{0}\\
0& 0&  \mu\omega_1& -\mu\omega_2& -\theta_{12}& 0 &-a_0\omega_1 &a_0\omega_2&\cdots &\vec{0}\\
0&0&0&0&-a_0\omega_2& a_0\omega_1& 0& \theta_{34} &\cdots&\vec{0}\\
0&0&0&0&-a_0\omega_1& -a_0\omega_2& -\theta_{34}&0 &\cdots&\vec{0}\\
\vdots&\vdots&\vdots&\vdots&\vdots&\vdots&\vdots&\vdots&\ddots&\vec{0}\\
\vec{0}& \vec{0}& \vec{0}& \vec{0}& \vec{0}&\vec{ 0}&\vec{0}&\vec{0}&\cdots&B
\end{array}
\right)
\]
where $$\bf{\Huge B}=
\begin{pmatrix}
-a_{n-2}\omega_2& a_{n-2}\omega_1& 0& \theta_{2n-1\,2n}\\
-a_{n-2}\omega_1& -a_{n-2}\omega_2& -\theta_{2n-1\,2n}&0
\end{pmatrix}.
$$
Denote the frame as a matrix $T:M^3\to \mathrm{SO}(2n+4)$ with respect to a fixed basis $\{{\bf e}_k\}_{k=1}^{2n+4}$ of
$\mathbb{R}^{2n+4}$, we can rewrite \eqref{Theta1} as
\begin{equation}\label{Theta2}
dT=\Theta T.
\end{equation}
The algebraic form of $\Theta$ motivates us to introduce a complex structure ${\bf J}$ on $\mathbb{R}^{2n+4}= \mathrm{Span}_\mathbb{R}\{\eta,
Y_3,Y_1,Y_2,\xi_1,\xi_2, \cdots, \xi_{2n-1}, \xi_{2n}\}$ as below:
\[
{\bf J}\begin{pmatrix}\eta\\Y_3 \\ Y_1\\ Y_2\\ \xi_1\\ \xi_2\\ \vdots \\ \xi_{2n-1} \\ \xi_{2n}\end{pmatrix}
=\begin{pmatrix}
\begin{pmatrix}0& -1\\1 & 0\end{pmatrix} & & & &\\
& \begin{pmatrix}0& -1\\1 & 0\end{pmatrix} & & &\\
& &\begin{pmatrix}0& -1\\1 & 0\end{pmatrix}& &\\
& & & \ddots &\\
& & & &\begin{pmatrix}0& -1\\1 & 0\end{pmatrix}
\end{pmatrix}\begin{pmatrix}\eta\\Y_3 \\ Y_1\\ Y_2\\ \xi_1\\ \xi_2\\ \vdots \\ \xi_{2n-1} \\ \xi_{2n}\end{pmatrix}.
\]
Denote the diagonal matrix at the right hand side as $J_0$. Then the matrix representation of operator ${\bf J}$ under
$\{{\bf e}_k\}_{k=1}^{2n+4}$ is:
\[
J=T^{-1}J_0T.
\]
Using $dT=\Theta T$ and the fact that $J_0$ commutes with $\Theta$, it is easy to verify
\[
dJ=-T^{-1}dT T^{-1}J_0T+T^{-1}J_0 dT
=-T^{-1}\Theta J_0T+T^{-1}J_0 \Theta T=0.
\]
So ${\bf J}$ is a well-defined complex structure on
this $\mathbb{R}^{2n+4}$.

Another way to look at the structure equations \eqref{Theta1}
is to consider the complex version. We define
$$\mathcal{Z}_1=\eta+iY_3,\mathcal{Z}_2=Y_1+Y_2,\zeta_1=\xi_1-i\xi_2,\cdots, \zeta_n=\xi_{2n-1}-i\xi_{2n}.$$
Then the complex version of the equation \eqref{Theta1} is
\begin{equation}
\begin{split}
d\mathcal{Z}_1&=-iL\omega_3\mathcal{Z}_1+L(\omega_1-i\omega_2)\mathcal{Z}_2,\\
d\mathcal{Z}_2&=-L(\omega_1+i\omega_2)\mathcal{Z}_1-i\omega_{12}\mathcal{Z}_2+i\mu(\omega_1-i\omega_2)\zeta_1,\\
d\zeta_1&=i\mu(\omega_1+i\omega_2)\mathcal{Z}_2+i\theta_{12}\zeta_1+ia_0(\omega_1-i\omega_2)\zeta_2,\\
d\zeta_k&=ia_{k-2}(\omega_1+i\omega_2)\zeta_{k-1}+i\theta_{{2k-1}\,{2k}}\zeta_k\\
&\quad +ia_{k-1}(\omega_1-i\omega_2)\zeta_{k+1},~~2\leq k \leq n-1, \\
d\zeta_n&=ia_{n-2}(\omega_1+i\omega_2)\zeta_{n-1}+i\theta_{{2n-1}\,{2n}}\zeta_n.
\end{split}
\end{equation}
Geometrically, this implies that
\[
\mathbb{C}^{n+2}=\mathrm{Span}_{\mathbb{C}}\{\mathcal{Z}_1,
\mathcal{Z}_2,\zeta_1, \zeta_2, \cdots,\zeta_n\},
\]
is a fixed n+2 dimensional complex vector space endowed with the complex structure $i$, which is identified with $(\mathbb{R}^{2n+4},{\bf J})$ via the following isomorphism between
complex linear spaces:
\[
v\in \mathbb{C}^{n+2}~~\mapsto~~\mathrm{Re}(v)\in \mathbb{R}^{2n+4}.
\]
For example, $\eta+iY_3\mapsto \eta,i\eta-Y_3\mapsto -Y_3$ and
so on.

The second geometrical conclusion is an interpretation of \eqref{5.1} that $[\eta+iY_3]$
defines a holomorphic mapping from the quotient surface $\overline{M}^2=M^3/\Gamma$ to the projective space $\mathbb{C}P^{n+1}$.
Moreover, the unit circle in
\[
\mathrm{Span}_{\mathbb{R}}\{\eta,Y_3\}=\mathrm{Span}_{\mathbb{C}}\{\eta+iY_3\}
\]
is a fiber of the Hopf fibration of $\mathbb{S}^{2n+3}\subset (\mathbb{R}^{2n+4},{\bf J})$. In fact it corresponds to the subspace
$\mathrm{Span}_{\mathbb{R}}\{Y,\hat{Y},Y_3\}$, which is geometrically
a leave of the foliation $(M^3,\Gamma)$. To see this, it follows from the equations of $d(\xi_{2k-1}-i\xi_{2k})$ that $\{\xi_1, \xi_2, \cdots, \xi_{2n}, d\xi_1, d\xi_2, \cdots , d\xi_{2n}\}$ span a (2n+2)-dimensional spacelike subspace, the corresponding 2-parameter family of 3-dimensional mean curvature sphere congruence has an
envelop $M^3$, whose points correspond to the light-like directions in the orthogonal complement $\mathrm{Span}\{Y,{\hat Y}, Y_3\}$. In particular $[Y],[{\hat Y}]$ are two points on this circle and such circles form a 2-parameter family, with $\overline{M}$ as the parameter space, they give a foliation of $M^3$ which is also a circle fibration.
Thus the whole $M^3$ is the Hopf lift of $\overline{M}^2\to\mathbb{C}P^{n+1}$.
In other words we have the following commutative diagram
\begin{equation*}
\begin{xy}
(30,30)*+{M^3}="v1", (60,30)*+{\mathbb{S}^{2n+3}}="v2", (90,30)*+{\mathbb{C}^{n+2}}="v3";%
(30,0)*+{\overline{M}^2}="v4", (60,0)*+{\mathbb{C}P^{n+1}}="v5".%
{\ar@{->}^{\eta} "v1"; "v2"}%
{\ar@{->}^{\subset} "v2"; "v3"}%
{\ar@{->}_{M^3/\Gamma} "v1"; "v4"}%
{\ar@{->}_{[\eta+iY_3]} "v1"; "v5"}%
{\ar@{->}^{} "v4"; "v5"}%
{\ar@{->}^{\pi} "v2"; "v5"}%
{\ar@{->}_{\pi} "v3"; "v5"}%
\end{xy}.
\end{equation*}
Next we prove $\overline{M}^2\to\mathbb{C}P^{n+1}$ is the Veronese surface of $\mathbb{C}P^{n+1}$. Note that any fibre of the Hopf fibration has the $\mathbb{S}^1$ homogenous. So $\overline{M}^2\to\mathbb{C}P^{n+1}$ must be homogenous and hence has constant curvature. The conclusion follows from the classical results of Calabi(\cite{Calabi} \cite{BJ}).
\end{proof}
Combining Proposition~\ref{the2}, Proposition~\ref{integ2} and Proposition~\ref{integ3}, we finish the proof of our main Theorem\ref{the1}.

\vspace{5mm} \noindent Tongzhu Li,
{\small\it Department of Mathematics, Beijing Institute of
Technology, Beijing 100081, People's Republic of China.
e-mail:{\sf litz@bit.edu.cn}}

\vspace{5mm} \noindent Xiang Ma,
{\small\it LMAM, School of Mathematical Sciences, Peking University,
Beijing 100871, People's Republic of China.
e-mail: {\sf maxiang@math.pku.edu.cn}}

\vspace{5mm} \noindent Changping Wang,
{\small\it School of Mathematics and Computer Science,
Fujian Normal University, Fuzhou 350108, People's Republic of China.
e-mail: {\sf cpwang@fjnu.edu.cn}}

\vspace{5mm} \noindent Zhenxiao Xie,
{\small\it School of Mathematical Sciences, Peking University,
Beijing 100871, People's Republic of China.
e-mail: {\sf xiezhenxiao@126.com}}

\end{document}